\DeclareMathOperator{\sgn}{sgn}
\newtheorem{theorem}{Theorem}[section]
\newtheorem{proposition}[theorem]{Proposition}
\newtheorem{lemma}[theorem]{Lemma}
\theoremstyle{definition}
\newtheorem{definition}[theorem]{Definition}
\theoremstyle{remark}
\newtheorem{remark}[theorem]{Remark}
\numberwithin{equation}{section}
\renewcommand{\Re}{\operatorname{Re}}
\newcommand{\la}{\lambda}
\def\N{{\mathbb N}}
\def\R{{\mathbb R}}
\newcommand{\norm}[1]{\left\|#1\right\|}
\newcommand{\eps}{\varepsilon}
\title[]{Non-uniqueness of mild solutions to supercritical heat equations}
\author{Irfan Glogi\'c}
\address{Fakult\"at f\"ur Mathematik, Universit\"at Bielefeld, D-33501 Bielefeld, Germany}
\email{irfan.glogic@uni-bielefeld.de}
\author{Martina Hofmanov\'a}
\address{Fakult\"at f\"ur Mathematik, Universit\"at Bielefeld, D-33501 Bielefeld, Germany}
\email{hofmanova@math.uni-bielefeld.de}
\author{Theresa Lange}
\address{Scuola Normale Superiore Pisa, Piazza dei Cavalieri 7, 56126 Pisa, Italy}
\email{theresa.lange@sns.it}
\author{Eliseo Luongo}
\address{Fakult\"at f\"ur Mathematik, Universit\"at Bielefeld, D-33501 Bielefeld, Germany}
\email{eluongo@math.uni-bielefeld.de}
\thanks{I.G.~acknowledges support by the Austrian Science Fund FWF, Projects P34378 and PAT5825523.
	M.H.~and E.L.~are grateful for funding from the European Research Council (ERC) under the European Union’s Horizon 2020 research and innovation programme (grant agreement No. 949981) and for the financial support provided by the Deutsche Forschungsgemeinschaft (DFG, German Research Foundation) – Project-ID 317210226–SFB 1283. T.L.~has received funding from the European Research Council (ERC) under the EU-HORIZON EUROPE ERC-2021-ADG research and innovation programme (project ”Noise in Fluids“, grant agreement no.~101053472).
}
\begin{document}

	\begin{abstract}
		We consider the focusing power nonlinearity heat equation
		\begin{equation}\label{Eq:Heat_abstract}\tag{NLH}
			\partial_t u -\Delta u =  |u|^{p-1}u, \quad p>1, \\
		\end{equation}
		in dimensions $d \geq 3$. It is well-known that if $p$ is large enough
		%, say $p > 1 + 2/(d-2),$
		then \eqref{Eq:Heat_abstract} is unconditionally locally well-posed in $L^q(\R^d)$ for $q \geq d(p-1)/2$. We prove that this result is optimal in the sense that uniqueness of local solutions fails when $q < d(p-1)/2$ as long as $p < p_{JL}$, where $p_{JL}$ stands for the Joseph-Lundgren exponent. Our proof is based on the method that Jia-\v{S}ver\'ak proposed in \cite{JiaSve15} to show non-uniqueness of Leray solutions to incompressible 3d Navier-Stokes equations. In particular, we rigorously verify for \eqref{Eq:Heat_abstract} the (analogue of the) spectral assumption made in \cite{JiaSve15}. To our knowledge, this is the first rigorous implementation of the Jia-\v{S}ver\'ak method to a nonlinear parabolic equation without forcing.
	\end{abstract}
	
	\maketitle
	\maketitle
	\section{Introduction}
	
	\noindent Consider the Cauchy problem for the focusing power nonlinearity heat equation
	\begin{equation}\label{Eq:Heat}
		\begin{cases}
			\partial_t u -\Delta u =  |u|^{p-1}u, \\
			u(0,\cdot)=u_0,
		\end{cases}
	\end{equation}
	where $u=u(t,x)\in \R$, $(t,x) \in [0,\infty) \times \R^d$, $d \geq 3$ and $p>1$.

	\begin{definition}
		Let $1 \leq q < \infty$, $u_0 \in L^q(\R^d)$ and $T>0$. By a \emph{mild $L^q$-solution} to \eqref{Eq:Heat} on the time interval $[0,T)$ 
		we call a function 
		\begin{equation*}
			u \in C([0,T),L^q(\R^d)) \cap L^p_{{loc}}((0,T)\times \R^d)
		\end{equation*}
		that is a distributional solution to \eqref{Eq:Heat} on $(0,T)\times \R^d$ and for which $u(0,\cdot)=u_0$.
	\end{definition}
	
	In this paper, we are concerned with the problem of existence and uniqueness of mild solutions to \eqref{Eq:Heat}. For a comprehensive overview of previous works, we refer to the book by Quittner-Souplet \cite{QuiSou19}, Section 15 in particular. Here, we give a short and non-inclusive overview of some of the relevant results.  In this context, important role is played by the exponent
	\begin{equation}\label{Def:q_c}
		q_c:= \frac{d(p-1)}{2}.
	\end{equation}
	
	\noindent It is know from the work of Weissler \cite{Wei80} that if $q > q_c$ or $q = q_c > 1$, then for arbitrary $u_0 \in L^q(\R^d)$ there exists $T>0$ and a mild $L^q$-solution to \eqref{Eq:Heat} on $[0,T)$. Furthermore, as shown later by Brezis-Cazenave \cite{BreCaz96}, if one imposes also the boundedness condition 
	\begin{equation*}
		u \in L^\infty_{{loc}}((0,T),L^\infty(\R^d))
	\end{equation*}
	then uniqueness of Weissler's solutions holds as well. For large enough $q$ one can in fact ensure unconditional uniqueness in the whole space $C([0,T),L^q(\R^d))$. Namely, already Weissler \cite{Wei80} showed that this holds when $q > q_c$ and $q \geq p$. Brezis-Cazenave \cite{BreCaz96} then extended his result to $q \geq  q_c$ assuming $q > p$. However, neither of the techniques from the aforementioned two papers apply to $q=q_c=p$. In fact, uniqueness was later shown to fail in this case by Terraneo \cite{Ter02} (this result was already known for the case of the unit ball domain; see Ni-Sacks \cite{NiSac85}). It is worth mentioning also the work of Giga \cite{Gig86} who showed that the condition of space-time integrability
	\begin{equation}\label{Eq:Giga_condition}
		u \in L^{p_1}((0,T),L^{p_2}(\R^d)) \quad \text{where} \quad p_2>q \quad \text{and} \quad  \frac{1}{p_1}=\left(\frac{1}{q}-\frac{1}{p_2}\right)\frac{d}{2},
	\end{equation} 
	ensures uniqueness for $q=q_c>1$. Clearly, \eqref{Eq:Giga_condition} is not satisfied by the solutions constructed by Terraneo \cite{Ter02}.
	
	When $q < q_c$ it is not in general known whether one can associate to every $u_0 \in L^q(\R^d)$ a mild $L^q$-solution to \eqref{Eq:Heat}. For a non-existence result under the assumption of non-negativity of local solutions see \cite{Wei80}. In contrast, it is known in certain cases that there are initial data that lead to multiple mild solutions. The earliest result goes back to Haraux-Weissler \cite{HarWei82} who showed non-uniqueness for all $1 \leq q<q_c$ when the power $p$ is in the range
	\begin{equation}\label{Eq:Haraux_range}
		1 + \frac{2}{d} < p < p_c,
	\end{equation}
	where $p_c$ is the so-called \emph{energy-critical} power
	\begin{equation}
		p_c=1+\frac{4}{d-2}.
	\end{equation}
	They show this by exhibiting for \eqref{Eq:Heat} a non-trivial rapidly decaying expanding self-similar solution, which thereby arises from zero initial datum. Expanding self-similar profiles with rapid decay exist, however, only in the energy-subcritical range \eqref{Eq:Haraux_range}. For the energy-(super)critical case, $p \geq p_c$, the problem of non-uniqueness  is, to our knowledge, open, and is the focus of this paper. We show that non-uniqueness (from non-zero initial data) holds for the following range of powers
	\begin{equation}\label{Eq:Range}
		1+\frac{2}{d} < p < p_{JL},
	\end{equation}
	where $p_{JL}$ stands for the so-called Joseph-Lundgren exponent
	\begin{equation*}
		p_{JL}:=
		\begin{cases}
			\infty & \text{if }~3 \leq d \leq 10,\\
			1+ \displaystyle{\frac{4}{d-4 - 2 \sqrt{d-1}}} & \text{if }~d \geq 11. 
		\end{cases}
	\end{equation*}
	More precisely, we establish the following result.
	\begin{theorem}\label{Thm:main}
		Assume $d \geq 3$ and let $p$ satisfy \eqref{Eq:Range}. Then for any $1 \leq q < q_c$
		there exists a non-trivial initial datum $u_0 \in L^q(\R^d)$ and a time $T>0$ for which there are two different mild $L^q$-solutions to \eqref{Eq:Heat} on $[0,T)$.
	\end{theorem}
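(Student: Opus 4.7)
The plan is to adapt the Jia-\v{S}ver\'ak scheme by (i) producing a non-trivial self-similar solution of \eqref{Eq:Heat}, (ii) verifying that its linearization in similarity variables admits an unstable eigenvalue, and (iii) using this spectral information to build a genuinely different second solution with the same initial datum. Concretely, one seeks
\begin{equation*}
\bar u(t,x) = t^{-1/(p-1)} U(x/\sqrt{t}),
\end{equation*}
where the radial profile $U$ solves the elliptic profile equation
\begin{equation*}
\Delta U + \tfrac12 \xi\cdot \nabla U + \tfrac{1}{p-1} U + |U|^{p-1} U = 0 .
\end{equation*}
For $p$ in the range \eqref{Eq:Range}, classical ODE arguments (going back to Troy, Haraux--Weissler, Souplet--Weissler) produce such a profile $U$, smooth and with slow decay $U(\xi)\sim c|\xi|^{-2/(p-1)}$ at infinity, so that $\bar u(t,\cdot) \to c|x|^{-2/(p-1)}$ as $t\to 0^+$. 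Multiplying by a smooth spatial cutoff yields an initial trace $u_0 \in L^q(\R^d)$ precisely for $q<q_c$, and since the statement is local in time one may work on a time interval short enough that the cutoff plays no role in the non-uniqueness argument.

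Passing to similarity variables $\tau = \log t$, $\xi = x/\sqrt t$, $v(\tau,\xi) = e^{\tau/(p-1)} u(e^\tau, e^{\tau/2}\xi)$ turns \eqref{Eq:Heat} into an autonomous equation on $\R^d$ for which $U$ is a steady state, and linearizing around $U$ gives
\begin{equation*}
\partial_\tau w = \mathscr{L} w + N(w), \qquad \mathscr{L} w := \Delta w + \tfrac12 \xi\cdot\nabla w + \tfrac{1}{p-1} w + p|U|^{p-1} w,
\end{equation*}
with $N(w)$ a superlinear remainder. The crucial step, and the one I expect to be the main obstacle, is to rigorously establish that $\mathscr{L}$ admits an eigenvalue $\lambda_0$ with $\Re \lambda_0 > 0$ in a suitable function space. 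The strategy would be: place $\mathscr{L}$ on a Gaussian-weighted $L^2$ space so that the Ornstein--Uhlenbeck part has compact resolvent, pushing the essential spectrum into $\{\Re\lambda < 0\}$; relate the discrete spectrum to the linearization of the elliptic equation $\Delta \Phi + |\Phi|^{p-1}\Phi = 0$ around the singular stationary solution $c_*|x|^{-2/(p-1)}$, whose Morse index is strictly positive precisely when $p < p_{JL}$ (Joseph--Lundgren / Gui--Ni--Wang); and finally transfer this instability into an honest eigenvalue of $\mathscr{L}$ through a perturbative argument that exploits the proximity between $U$ and the singular stationary solution at large $|\xi|$.

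With an unstable eigenpair $(\lambda_0,\eta)$ at hand, I would build the second solution via the standard unstable-manifold Duhamel fixed point
\begin{equation*}
w(\tau) = e^{\lambda_0 \tau}\eta + \int_{-\infty}^\tau e^{(\tau-s)\mathscr{L}} P^{\rm s} N(w(s))\,\dd s - \int_{\tau}^{\tau_0} e^{(\tau-s)\mathscr{L}} P^{\rm u} N(w(s))\,\dd s,
\end{equation*}
with $P^{\rm s}, P^{\rm u}$ the spectral projectors off and onto the unstable subspace, solved by contraction in a norm that weights by $e^{-\lambda_0 \tau}$ on $(-\infty,\tau_0]$ for $\tau_0 \ll 0$. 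Setting $u_2(t,x) := \bar u(t,x) + t^{-1/(p-1)} w(\log t, x/\sqrt t)$ then defines a second mild $L^q$-solution; the size of the perturbation scales like $\|u_2(t)-\bar u(t)\|_{L^q} \sim t^{\lambda_0 + d/(2q) - 1/(p-1)}$, which tends to zero as $t\to 0^+$ whenever $q<q_c$ (since then $d/(2q)>1/(p-1)$), so $u_2$ and $\bar u$ share the same initial datum $u_0$ while remaining distinct for $t>0$, yielding the desired non-uniqueness.
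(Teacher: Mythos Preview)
Your outline captures steps (i)--(iii) of the Jia--\v{S}ver\'ak scheme correctly, and the unstable-manifold construction via Duhamel is essentially what the paper does (though the paper avoids spectral projectors by working directly with the full semigroup in a weighted-in-$\tau$ norm). However, there is a genuine gap in your treatment of the cutoff, and it is precisely where most of the paper's technical work lies.

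The self-similar solution $\bar u$ has initial trace $c|x|^{-2/(p-1)}$, which for $q<q_c$ fails to be in $L^q(\R^d)$ at infinity. You say that after cutting off one ``may work on a time interval short enough that the cutoff plays no role'', but the heat equation has infinite speed of propagation, so truncating the datum instantaneously alters the solution everywhere; neither $\bar u$ nor your $u_2=\bar u+t^{-1/(p-1)}w$ is then a mild $L^q$-solution with datum $u_0$. The paper handles this by writing $\tilde u_0=u_0+w_0$ with $u_0\in L^q$ compactly supported and $w_0\in L^r$ for some $r>q_c$, and then solving for corrections $w_1,w_2$ satisfying a nonlinear heat equation with the singular time-dependent potential $p|\bar u(t,\cdot)|^{p-1}\sim t^{-1}$; one then sets $u_1=\bar u-w_1$ and $u_2=\bar u+t^{-1/(p-1)}\psi-w_2$. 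Establishing well-posedness for this auxiliary problem (Section~4) is delicate precisely because the potential blows up at $t=0$, and the argument only closes under the quantitative smallness condition $\lambda_{\bar\alpha}<\tfrac{1}{p-1}-\tfrac{d}{2r}$ on the unstable eigenvalue.

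This last point also affects your spectral step: it is not enough to produce \emph{some} positive eigenvalue; you need one that can be made arbitrarily small. The paper achieves this not by perturbing off the singular stationary solution as you suggest, but by varying the shooting parameter $\alpha$ for the expander ODE, using Sturm--Liouville oscillation theory together with Naito's ODE analysis to locate a threshold $\alpha^*$ at which the principal eigenvalue crosses zero; continuity in $\alpha$ then gives expanders with $\lambda_{\bar\alpha}$ as small as desired. Your perturbative route from the singular solution might be made to work, but it does not obviously yield this smallness, and without it the localization in $L^q$ does not close.
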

	\noindent Some remarks are in order.
	\begin{remark}
		Our proof of Theorem \ref{Thm:main} is an adaptation of the method Jia-\v{S}ver\'{a}k proposed in \cite{JiaSve15} for showing non-uniqueness of Leray-Hopf solutions to (unforced) incompressible 3d Navier-Stokes equations.
		Their approach is based on the assumption that there exists a forward self-similar solution that is linearly unstable in similarity variables. It, however, appears to be very difficult to rigorously verify this assumption; for numerical evidence see \cite{GuiSve23}. In this paper we show that (the analogue of) this assumption is true for equation \eqref{Eq:Heat} for the range of powers $p$ given in \eqref{Eq:Range}. What is more, we show that the restriction \eqref{Eq:Range} is necessary in the class of radial solutions, i.e., there are no linearly unstable radial expanding self-similar solutions to \eqref{Eq:Heat} if $p \geq p_{JL}$. 
		
	\end{remark}
	
	\begin{remark}
		The idea of using unstable expanding self-similar solutions to show non-unique-ness  has been employed in a number of contexts lately, primarily in fluid dynamics; for some of the recent results see, e.g., \cite{DolMes24,CasFarMen24,DeBruAlb24,AlbBruCol22}. These results, however, in contrast to Theorem \ref{Thm:main}, consider equations with non-trivial forcing terms, which serve the purpose of facilitating the construction of unstable similarity profiles.
	\end{remark}

	\begin{remark}
		We note that the initial data that we construct are radial, satisfy
		\begin{equation*}
			u_0(x) = \frac{C}{|x|^{\frac{2}{p-1}}} \quad \text{for some} \quad C>0,
		\end{equation*}
		near zero, and are uniformly bounded otherwise.
		Furthermore, as it will be apparent from the proof, the non-uniqueness mechanism we exhibit persists under perturbations of $u_0$ that are radial and in $L^q \cap L^r$ for certain $r>q_c$. Note, however, that such perturbations do not remove the singular behavior near zero. We contrast this with the case when perturbations are allowed to be chosen from $L^q$, since there are then arbitrarily small deformations that turn such datum into an $L^\infty$ function, which does not fit into our non-uniqueness scheme. This indicates some sort of non-genericity of our non-uniqueness mechanism in $L^q$.
	\end{remark}
	
	\begin{remark}
		Haraux-Weissler asked in \cite{HarWei82} as to whether their non-uniqueness result can be shown for non-zero initial data. Theorem \ref{Thm:main}  gives a positive answer to this question.
	\end{remark}
	
	\subsection{Outline of the proof of the main result} The proof is thematically split into several sections. In Section \ref{Sec:Expanders} we analyze the existence and linear stability of radial expanding self-similar solutions to \eqref{Eq:Heat}. More precisely, we consider solutions of the following form
	\begin{equation}\label{Eq:Expander_intro}
		u(t,x)=\frac{1}{t^{\frac{1}{p-1}}}  U \left( \frac{|x|}{\sqrt{t}} \right), \quad t > 0.
	\end{equation}
	To study stability of \eqref{Eq:Expander_intro}, it is customary to pass to \emph{(radial) similarity variables}
	\begin{equation*}
		\tau :=  \ln t, \quad \rho := \frac{|x|}{\sqrt{t}}.
	\end{equation*}
	By also scaling the solution profile $t^\frac{1}{p-1}u(t,x):=v(\tau,\rho)$,
	from \eqref{Eq:Heat_radial} we arrive at an evolution equation for  $v$
	\begin{equation}\label{Eq:Sim_var_v_intro}
		\partial_\tau v = L_0 v + |v|^{p-1}v.
	\end{equation}
	Here, the linear operator $L_0$ is given by
	\begin{equation*}
		L_0  = \partial_\rho^2 +\frac{d-1}{\rho} \partial_\rho + \frac{1}{2}\rho \partial_\rho + \frac{1}{p-1}.
	\end{equation*}
	Note that the expander profiles $U$ are now static solutions to \eqref{Eq:Sim_var_v_intro}. Then, by linearization around $U$ we get
	\begin{equation}\label{Eq:Sim_var_w_intro}
		\partial_\tau w = L_0w + V w + N(w),
	\end{equation}
	where $	V = p  |U|^{p-1}$
	and $N(w)$ is the nonlinear remainder. We then proceed to construct solutions to \eqref{Eq:Sim_var_w_intro} in the radial intersection Lebesgue spaces
	\begin{equation*}
		L^{q,r}:=L^q_{{rad}} (\R^d) \cap L^r_{{rad}} (\R^d), \quad \| \cdot \|_{L^{q,r}} := \| \cdot \|_{L^q(\R^d)} + \| \cdot \|_{L^r(\R^d)},
	\end{equation*}
	for $q < q_c < r$. For this, we employ semigroup theory. First, we show that $L:=L_0 + V$ generates a strongly continuous semigroup $S(\tau)$ in $L^{q,r}$ with a negative essential growth bound. This, in particular, means that if the  growth bound of $S(\tau)$ is positive, then it is given by the spectral bound $s(L)$. Existence of expanders $U$ for which the operator $L$ has positive yet arbitrarily small spectral bound, is essential for the rest of the paper. In line with this, we follow with the central result of the section, Theorem \ref{Prop:Spectral_properties_L}, which says that the unstable spectrum of $L$ consists of a finite number of real eigenvalues. Furthermore, we show that if \eqref{Eq:Range} holds,
	then \eqref{Eq:Heat} admits a radial expander $\bar{U}$ for which the operator $L$ 
	has at least one positive eigenvalue. What is more, the largest eigenvalue $\bar{\la}$, which is  then equal to $s(L)$, can be made arbitrarily small. We also prove that the range \eqref{Eq:Range} is optimal, in the sense that for $p\geq p_{JL}$ there are no linearly unstable radial expanders.

	In Section \ref{sec: ancient solutions}, we are using  unstable expanders $\bar{U}$ from Section \ref{Sec:Expanders} to construct ancient solutions to 
	\begin{align}\label{eq:self_sim_intro}
		\partial_\tau U=L_0 U+|U|^{p-1}U,\quad x\in \R^d, \quad \tau\in (-\infty,T],
	\end{align}
	of the form $	U=\bar{U}+\psi$,
	where $\psi(\tau) \rightarrow 0$ in $L^{q,r}$ as $\tau \rightarrow -\infty$. The curve $\tau \mapsto \psi(\tau)$, in fact, represents for $\bar{U}$ the unstable manifold that corresponds to the largest unstable eigenvalue $\bar{\la}$. This construction is the content of the main result of this section, Theorem \ref{thm:existence_ancient_solutions}.
	The two solutions $U_1=\bar{U}$ and $U_2=\bar{U} + \psi$ then yield two radial solutions of \eqref{Eq:Heat} that stem from the same singular initial datum 
	\begin{equation*}
		\tilde{u}_0(x)=\frac{\bar{\ell}}{|x|^\frac{2}{p-1}} \quad \quad \text{for some} \quad\bar{\ell}>0.
	\end{equation*}
	This profile, however, fails to belong to $L^q_{rad}$ precisely when $q<q_c$. To enforce integrability, we have to truncate $\tilde{u}_0$, and deform the two radial solution accordingly such that they yield, for positive times, two different mild $L^q$-solutions. This process is the content of the last two sections.
	
	In Section \ref{sec:linear_heat_with_potential} we write the initial datum above as $\tilde{u}_0=u_0+w_0$, for compactly supported and radial $u_0$, which therefore belongs to $L^q_{rad}$, and the cut-off $w_0$, which is in $L^r_{rad}$. Then we analyze the evolution of $w_0$, so as to subtract it from the two radial solutions above to obtain two local $L^q$-solutions $u_1,u_2$ with initial datum $u_0$. The underlying Cauchy problem is given by
	\begin{align}\label{nonlinear PDE_intro}
		\begin{cases}
			\partial_t w=\Delta w+p\lvert \bar{u}\rvert^{p-1}w+f(w),\\
			w(0)=w_0,
		\end{cases}
	\end{align}
	where the potential term is self-similar, i.e.,
	\begin{align*}
		\bar{u}(t,x)=\frac{1}{t^{\frac{1}{p-1}}}\bar{U}\left(\frac{|x|}{\sqrt{t}}\right),
	\end{align*}
	and the forcing is 
	\begin{align*}
		f(w)&=\lvert \bar{u}+u'\rvert^{p-1}(\bar{u}+u') -\lvert \bar{u}+u'-w\rvert^{p-1}(\bar{u}+u'-w)-p\lvert \bar{u}\rvert^{p-1}w;
	\end{align*}
	we denote by $w_1$ a local $L^r$-solution  to \eqref{nonlinear PDE_intro} corresponding to $u'=0$, and $w_2$  corresponding to $u'=\frac{1}{t^{\frac{1}{p-1}}}\psi\big( \ln t,\frac{|x|}{\sqrt{t}}\big)$. Constructing such solutions is, however, far from trivial, due to the fact that the potential term $p|\bar{u}|^{p-1}$ is time dependent and singular at $t=0$. We nevertheless proceed to establish a well-posedness theory for problems of type \eqref{nonlinear PDE_intro}. More precisely, we show that local existence and uniqueness of solutions in $L^r_{rad}$ holds if the largest positive eigenvalue $\bar{\la}$ that corresponds to $\bar{U}$ is small enough, i.e., if
	\begin{equation}\label{Eq:la_bar}
		\bar{\la} < \frac{1}{p-1}-\frac{d}{2r}.
	\end{equation}
	This is the content of Lemma \ref{lem:linear PDE}.
	Note that in lower dimensions, $d \leq 10$, the power $p$ is allowed to be arbitrarily large, which then, due  to \eqref{Eq:la_bar}, forces arbitrarily small choices of $\bar{\la}$. The section ends with Theorem \ref{nonlinear_singular_PDE}, which says that for small enough ancient solutions $\psi$, the Cauchy problem \eqref{nonlinear PDE_intro} admits a local solution in $L^r_{rad}.$ 
	
	In Section \ref{sec:localization}, we use the properties of $w_1$ and $w_2$ constructed in Section \ref{sec:linear_heat_with_potential} to show that $u_1:=\bar{u}-w_1$ and $u_2:=\bar{u}+u'-w_2$ are mild $L^q$-solutions to \eqref{Eq:Heat} with common initial datum $u_0$. Furthermore, by using the fact that the unstable manifold $\psi(\tau)$ from Section \ref{sec: ancient solutions} does not decay to zero (as $\tau \rightarrow -\infty$) faster than the unstable mode corresponding to $\bar{\la}$, we show that $u_1 \neq u_2$. This finishes the proof.
	
	\subsection{Notation and conventions}
	
	Given a closed linear operator $(L,\mathcal{D}(L))$ on a Banach space $X$,  we denote by $\rho(L)$ the resolvent set of $L$, while $\sigma( L):= \mathbb{C} \setminus \rho( L)$ stands for the spectrum of $ L$. By $\mathcal{L}(X)$ we denote the space of bounded linear operators on $X$.  For estimates, we use the convenient asymptotic notation $a \lesssim b$ to say that there is some $C>0$ such that $a \leq Cb$. Sometimes, when it is obvious from the context, we omit explicitly mentioning the parameters on which the choice of the implied constant $C$ does not depend. To emphasize the dependence of $C$ on a parameter, say $p$, we will sometimes write $\lesssim_p$.
	Given a function $f$ that depends on time and space variables, say $t$ and $x$, we will, for convenience, often denote $f(t,\cdot)$ by $f(t)$.
	\section{Forward self-similar solutions}\label{Sec:Expanders}
	\noindent In this section, we will restrict our analysis to radial solutions of \eqref{Eq:Heat}. More precisely, we consider
	\begin{equation}\label{Eq:Heat_radial}
		\begin{cases}
			\partial_t u - \partial_r^2 u -\frac{d-1}{r}\partial_r u = |u|^{p-1}u, \\
			u(0,\cdot)=u_0,
		\end{cases}
	\end{equation}
	where $u=u(t,r)\in \R$, $(t,r)\in [0,\infty) \times [0,\infty)$, $d \geq 3$ and $p>1$.
	\noindent This section is devoted to the study of the existence and stability of \emph{forward self-similar solutions} to \eqref{Eq:Heat_radial}
	\begin{equation}\label{Eq:Expander}
		u(t,r)=\frac{1}{t^{\frac{1}{p-1}}}  U \left( \frac{r}{\sqrt{t}} \right).
	\end{equation}
	Such solution are descriptively also called \emph{expanding self-similar solutions}, or shortly \emph{expanders}. For convenience, we will also refer to profiles $U$ as expanders.
	\subsection{Existence of expanders}
	By plugging the ansatz \eqref{Eq:Expander} in \eqref{Eq:Heat_radial} we arrive at the following nonlinear ODE for the profile $U=U(\rho)$
	\begin{equation}\label{Eq:Expander_eq_radial}
		U'' + \left(\frac{d-1}{\rho}+\frac{\rho}{2}  \right)U' + \frac{1}{p-1} U + |U|^{p-1}U = 0.
	\end{equation}
	Under initial conditions
	\begin{equation}\label{Eq:Init_cond}
		U(0)=\alpha>0 \quad \text{and} \quad U'(0)=0,
	\end{equation}
	\eqref{Eq:Expander_eq_radial} admits a unique (classical) solution near zero.
	The global properties of these solutions have been extensively studied; for some early results see, e.g., \cite{HarWei82,PelTerWei86}, and for a comprehensive overview see \cite[Appendix Ga]{QuiSou19}. Here, we copy a result of Haraux-Weissler from \cite{HarWei82}.
	\begin{proposition}[\cite{HarWei82}]\label{Prop:Har-Wei}
		Let $d \geq 3$ and $p>1$. Then for every $\alpha > 0$ there exists a unique function $U \in C^2[0,\infty)$ that satisfies \eqref{Eq:Init_cond} and solves \eqref{Eq:Expander_eq_radial} on $(0,\infty)$ classically. Moreover, $U$ is bounded and $\lim_{\rho \rightarrow \infty} \rho^{\frac{2}{p-1}}U(\rho)$  exists and is finite.
	\end{proposition}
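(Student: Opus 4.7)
My plan is to address the three claims separately---local well-posedness at the singular point $\rho=0$, global extension with boundedness, and the precise asymptotic rate---with the last being the principal difficulty.

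For local existence and uniqueness, the obstacle is the singularity of the coefficient $\frac{d-1}{\rho}$ at the origin. I would rewrite \eqref{Eq:Expander_eq_radial} using the integrating factor $\rho^{d-1} e^{\rho^2/4}$, which recasts the linear part as $\big(\rho^{d-1} e^{\rho^2/4} U'\big)'=-\rho^{d-1} e^{\rho^2/4}\big(\tfrac{U}{p-1}+|U|^{p-1}U\big)$. Integrating twice against \eqref{Eq:Init_cond} yields a Volterra integral equation whose kernel is continuous up to $\rho=0$ because the inner integral $\int_0^\rho s^{d-1}(\cdots)\,ds$ starts as $O(\rho^d)$ and absorbs the $\rho^{1-d}$ prefactor. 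A standard contraction argument on a small ball around the constant function $\alpha$ in $C([0,\rho_0])$ gives a unique continuous solution, and the integral representation immediately upgrades it to $C^2$ at $0$ with $U'(0)=0$, while Cauchy--Lipschitz extends it classically on $(0,\infty)$.

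For global extension and boundedness, I would introduce the Lyapunov functional
\begin{equation*}
E(\rho):=\tfrac{1}{2}U'(\rho)^2+\tfrac{U(\rho)^2}{2(p-1)}+\tfrac{|U(\rho)|^{p+1}}{p+1},
\end{equation*}
obtained by multiplying \eqref{Eq:Expander_eq_radial} by $U'$. A direct computation gives
\begin{equation*}
E'(\rho)=-\Big(\tfrac{d-1}{\rho}+\tfrac{\rho}{2}\Big)U'(\rho)^2\leq 0,
\end{equation*}
so $E$ is non-increasing and hence bounded by $E(0)=\tfrac{\alpha^2}{2(p-1)}+\tfrac{\alpha^{p+1}}{p+1}$. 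The $|U|^{p+1}$-term then yields the uniform bound $|U(\rho)|\leq C(\alpha,p)$, ruling out finite-$\rho$ blow-up and extending the local solution to $[0,\infty)$.

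The hard part is the asymptotic $\rho^{2/(p-1)}U(\rho)\to c$ for some finite $c$. Integrating the dissipation identity from $0$ to $\infty$ yields the weighted estimate $\int_0^\infty s\,U'(s)^2\,ds<\infty$, together with a finite limit $E_\infty:=\lim_{\rho\to\infty}E(\rho)$. Using the ODE to control $|U''|$ in terms of $\rho|U'|$, $|U|$, and $|U|^p$, a standard bump argument rules out persistent bursts of $|U'|$ and gives $U'(\rho)\to 0$. The existence of $E_\infty$ combined with the strict convexity in $|U|$ of $\tfrac{U^2}{2(p-1)}+\tfrac{|U|^{p+1}}{p+1}$ then forces $|U(\rho)|$ to converge, and inserting the limit into \eqref{Eq:Expander_eq_radial} forces $U(\rho)\to 0$. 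With $U$ decaying, the nonlinearity becomes a lower-order perturbation, and I would analyze the ODE against its linearization $U''+\big(\tfrac{d-1}{\rho}+\tfrac{\rho}{2}\big)U'+\tfrac{U}{p-1}=0$. This confluent-hypergeometric-type equation admits two independent asymptotic solutions at infinity: the algebraic mode $\rho^{-2/(p-1)}$ and the Gaussian mode $\rho^{2/(p-1)-d}e^{-\rho^2/4}$. A variation-of-parameters representation of $U$ against this basis, combined with the fact that $|U|^{p-1}U=O(\rho^{-2p/(p-1)})$ integrates against the relevant weights, should allow one to isolate a well-defined limit $c=\lim_{\rho\to\infty}\rho^{2/(p-1)}U(\rho)\in\R$. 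The most delicate step is closing the bootstrap so that the nonlinear error does not contribute an unbounded or oscillating term at infinity; this is where one genuinely uses that $p>1$ and $d\geq 3$.
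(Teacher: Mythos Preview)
The paper does not give a proof of this proposition: it is quoted from Haraux--Weissler \cite{HarWei82} without argument, so there is no in-paper proof to compare your proposal against.

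Your outline is essentially the classical proof from that reference. The Lyapunov functional $E(\rho)=\tfrac12 U'^2+\tfrac{1}{2(p-1)}U^2+\tfrac{1}{p+1}|U|^{p+1}$ with $E'=-\big(\tfrac{d-1}{\rho}+\tfrac{\rho}{2}\big)U'^2\le 0$ is exactly the Haraux--Weissler device for global boundedness, and the two asymptotic modes $\rho^{-2/(p-1)}$ and $\rho^{2/(p-1)-d}e^{-\rho^2/4}$ you identify for the linearized equation coincide with the asymptotics the present paper itself records later in \eqref{Eq:Asymp_inf} (take $\lambda=0$ and $V_\alpha=0$ there). The intermediate steps---$U'\to 0$, then $U\to 0$, then isolation of the algebraic mode via variation of parameters---are only sketched in your proposal but are standard and correct. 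For the one place you flag as delicate: the bump argument for $U'\to 0$ does close once you note that $|((U')^2)'|\lesssim \rho(U')^2+|U'|$ forces each bump to have width $\gtrsim 1/\rho$, so that against the weight $\rho$ in $\int_1^\infty \rho\, U'^2\,d\rho<\infty$ every bump contributes a fixed positive amount, which is the contradiction. The conclusion $U\to 0$ then follows from the integrating-factor identity $(\rho^{d-1}e^{\rho^2/4}U')'=-\rho^{d-1}e^{\rho^2/4}\big(\tfrac{U}{p-1}+|U|^{p-1}U\big)$, since a nonzero limit would force $U'\sim -c/\rho$ and hence logarithmic divergence of $U$.
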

	
	\noindent To indicate the dependence on $\alpha$, in what follows we will denote the expanders from the theorem above by $U_\alpha$. Furthermore, we denote
	\begin{equation*}
		\ell(\alpha):=\lim_{\rho \rightarrow \infty} \rho^{\frac{2}{p-1}}U_\alpha(\rho).
	\end{equation*}
	Later on, Naito \cite{Nai06} described in more detail the continuity and monotonicity properties of the function $\alpha \mapsto \ell(\alpha)$; see \cite[Theorem 1.1]{Nai06} in particular.

	%
	\iffalse
	\begin{proposition}\label{Prop:expander_existence}
		Let $d \geq 3$ and $p > 1+ \frac{2}{d}$. Then for any $\alpha > 0$ the local solution $U$ that satisfies \eqref{Eq:Init_cond} is global, positive, strictly decreasing, and there is $\ell=\ell(\alpha)>0$ for which
		\begin{equation*}
			U(\rho) = \frac{\ell}{\rho^{\frac{2}{p-1}}} + O\left(\frac{1}{\rho^{\frac{2p}{p-1}}}\right) 
		\end{equation*}
		as $\rho \rightarrow \infty$.
	\end{proposition}
	
	\fi
	
	\subsection{Stability of expanders}

	\noindent Whenever we do not explicitly specify otherwise, we assume in the rest of this section that  $d \geq 3$, $p>1$ and $\alpha>0$. To analyze stability of expanders, it is customary to pass in \eqref{Eq:Heat_radial}  to variables that are adapted to the self-similar nature of \eqref{Eq:Expander}, the so-called \emph{(radial) similarity variables}
	\begin{equation*}
		\tau :=  \ln t, \quad \rho := \frac{r}{\sqrt{t}}.
	\end{equation*}
	By also scaling the dependent variable
	\begin{equation*}
		v(\tau,\rho):=t^\frac{1}{p-1}u(t,r),
	\end{equation*}
	from \eqref{Eq:Heat_radial} we arrive at an evolution equation for  $v$
	\begin{equation}\label{Eq:Sim_var_v}
		\partial_\tau v = L_0 v + |v|^{p-1}v,
	\end{equation}
	where the linear operator $L_0$ is given by
	\begin{equation*}
		L_0  = \partial_\rho^2 +\frac{d-1}{\rho} \partial_\rho + \frac{1}{2}\rho \partial_\rho + \frac{1}{p-1}.
	\end{equation*}
	Note that expander profiles $U_\alpha$ are now static solutions to \eqref{Eq:Sim_var_v}. What we perform below is the (non)linear stability analysis of $U_\alpha$. For this, we first introduce an appropriate functional framework. To begin, for $p \geq 1$ we define the space of radial Lebesgue functions
	\begin{equation*}
		L^p_{rad}(\R^d):= \{ f : [0,\infty) \rightarrow \mathbb{C}~ \vert~ f \text{ is measurable and } \| f \|_{L^p_{rad}(\R^d)} := \| f(|\cdot|) \|_{L^p(\R^d)} < \infty  \}.
	\end{equation*}
	We also need the spaces 
	\begin{align*}
		C_{c,rad}^\infty(\R^d) &:= \{ f : [0,\infty) \rightarrow \mathbb{C} ~ \vert ~ f(|\cdot|) \in C_c^{\infty}(\R^d)  \},\\
		\mathscr{S}_{rad}(\R^d)&:= \{ f : [0,\infty) \rightarrow \mathbb{C} ~ \vert ~ f(|\cdot|) \in \mathscr{S}(\R^d)  \},   
	\end{align*}
	where $C_c^{\infty}(\R^d)$ is the standard test space of smooth and compactly supported functions on $\R^d$ and $\mathscr{S}(\R^d)$ is the space of Schwartz functions on $\R^d$. We note that both $C_{c,rad}^\infty(\R^d)$ and $\mathscr{S}_{rad}(\R^d)$ are dense in $L^{p}_{rad}(\R^d)$.
	For convenience, we will often shortly write $L^p_{{rad}}$, $C_{c,rad}^\infty$ and $\mathscr{S}_{rad}$ for $L^p_{{rad}} (\R^d)$, $C_{c,rad}^\infty(\R^d)$ and $\mathscr{S}_{rad}(\R^d)$ respectively. Now, for $1\leq \eta\leq \gamma$ we define the radial intersection  Lebesgue space\footnote{We decided to use the suggestive notation $L^{\eta,\gamma}$, hoping it will not cause confusion with the more standard usage in the context of Morrey or Lorenz spaces. }
	\begin{equation*}
		L^{\eta,\gamma}:=L^\eta_{{rad}} (\R^d) \cap L^\gamma_{{rad}} (\R^d), \quad \| \cdot \|_{L^{\eta,\gamma}} := \| \cdot \|_{L^\eta(\R^d)} + \mathbf{1}_{(0,\infty)}(\gamma-\eta)\| \cdot \|_{L^\gamma(\R^d)}.
	\end{equation*}
	Note that $L^{\eta,\eta}=L_{rad}^\eta$. Although we define spaces of radial functions on $\R^d$ via their radial profiles, for convenience we will at times interpret them as defined on $\R^d$ via the identification $f(x)=f(|x|)$ for $x \in \R^d$. 
	
	Our aim is to study the flow of \eqref{Eq:Sim_var_v} near $U_\alpha$ in spaces $L^{\eta,\gamma}$. To this end, we substitute $v(\tau,\rho)= U_\alpha(\rho) + w(\tau,\rho)$ into \eqref{Eq:Sim_var_v}. This then leads to an evolution equation for the perturbation $w$
	\begin{equation}\label{Eq:Sim_var_w}
		\partial_\tau w = L_0w + V_\alpha w + N(w),
	\end{equation}
	where
	\begin{equation*}
		V_\alpha = p  |U_\alpha|^{p-1},
	\end{equation*}
	and
	\begin{equation*}
		\quad N_\alpha(w) := n(U_\alpha+w)-n(U_\alpha)-p|U|^{p-1}w \quad \text{for} \quad  n(f)=|f|^{p-1}f. 
	\end{equation*}
	To construct solutions to \eqref{Eq:Sim_var_w} we resort to semigroup theory. First, we study the flow generated by $L_0$, and then we perturbatively treat the linear flow of \eqref{Eq:Sim_var_w}. To this end, we supply $L_0$ with a domain $\mathcal{D}(L_0):=\mathscr{S}_{rad}$. With these preparations at hand, we formulate the first result of this section.
	\begin{proposition}\label{Prop_Semigroup_S0}
		Let $1\leq \eta\leq \gamma$. Then the operator $(L_0,\mathcal{D}(L_0))$ is closable in $L^{\eta,\gamma}$, and its closure (which we also denote by $(L_0,\mathcal{D}(L_0))$) generates a one-parameter strongly continuous semigroup $\left(S_0(\tau)\right)_{\tau \geq 0}\subseteq \mathcal{L}(L^{\eta,\gamma})$, which, for some $M\geq 1$,  satisfies the growth estimate
		\begin{align}\label{estimate_semigroup}
			\lVert S_0(\tau)u_0\rVert_{L^{\eta,\gamma}}\leq Me^{\left(\frac{1}{p-1}-\frac{d}{2\eta}\right)\tau}\lVert u_0\rVert_{L^{\eta,\gamma}},
		\end{align}
		for $u_0\in L^{\eta,\gamma}$ and $\tau\geq 0$. The semigroup $S_0$ is, in fact, given explicitly by the convolution relation in $\R^d$
		\begin{align}\label{semigroup_explicit_expression}
			S_0(\tau)u_0(|\xi|)&= e^{\frac{\tau}{p-1}}G_{\tau}\ast u_0(e^{\frac{\tau}{2}}|\xi|), 
		\end{align}
		where 
		\begin{align*}
			G_{\tau}(\xi)=\left(4\pi \alpha(\tau)\right)^{-d/2}e^{-\frac{\lvert \xi\rvert^2}{4\alpha(\tau)}},\quad \alpha(\tau)=e^\tau-1.
		\end{align*}
		Furthermore, $S_0(\tau)$ satisfies the smoothing estimates
		\begin{align}\label{eq:regularization_semigroup_lplq1}
			\lVert S_0(\tau)u_0 \rVert_{L^{\eta' }_{rad}}&\lesssim\frac{e^{(\frac{1}{p-1}- \frac{d}{2\eta}) \tau}}{\alpha(\tau)^{\frac{d}{2}\left(\frac{1}{\eta}-\frac{1}{\eta'}\right)}}\lVert u_0\rVert_{L^\eta_{rad}},\\
			\label{eq:regularization_semigroup_lplq2} \lVert S_0(\tau)u_0 \rVert_{L^{\eta',\gamma'}}&\lesssim\frac{e^{(\frac{1}{p-1}- \frac{d}{2\eta}) \tau}}{\alpha(\tau)^{\frac{d}{2}\left(\frac{1}{\eta}-\frac{1}{\eta'}\right)}}\lVert u_0\rVert_{L^{\eta,\gamma}},
		\end{align}
		for all $\tau \in (0,2]$, whenever $ \eta\leq \eta',\ \gamma\leq \gamma',\ \frac{1}{\eta}-\frac{1}{\eta'}=\frac{1}{\gamma}-\frac{1}{\gamma'}.$
	\end{proposition}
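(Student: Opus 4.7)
The plan is to define $S_0(\tau)$ directly by the explicit formula \eqref{semigroup_explicit_expression} and then verify, in order, its boundedness on $L^{\eta,\gamma}$ with the stated growth and smoothing estimates, the semigroup property, strong continuity, and finally identification of its generator with the closure of $(L_0,\mathcal{D}(L_0))$. The motivating observation is that the similarity change of variables $(t,r)\mapsto(\tau,\rho)=(\ln t, r/\sqrt{t})$ together with the profile rescaling $v=t^{1/(p-1)}u$ is exactly the transformation intertwining $\partial_t-\Delta$ (in radial form) with $\partial_\tau-L_0$. Consequently, writing the solution of the linear heat equation with radial datum $u_0$ imposed at $t=1$ and pushing it forward through this change of variables yields \eqref{semigroup_explicit_expression}. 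The semigroup identity $S_0(\tau+\sigma)=S_0(\tau)S_0(\sigma)$ then follows from the heat-semigroup identity combined with the composition law for dilations and the variance-addition relation $\alpha(\tau+\sigma)=e^\sigma\alpha(\tau)+\alpha(\sigma)$.

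For the quantitative bounds I would factor $S_0(\tau)$ as the composition of (i) the scalar prefactor $e^{\tau/(p-1)}$, (ii) convolution with the Gaussian $G_\tau$, which is a contraction on every $L^\eta(\R^d)$ since $\|G_\tau\|_{L^1}=1$, and (iii) the dilation $f\mapsto f(e^{\tau/2}\,\cdot\,)$, whose operator norm on $L^\eta$ equals $e^{-d\tau/(2\eta)}$. Multiplying these three factors gives the $L^\eta\to L^\eta$ bound with rate $\tfrac{1}{p-1}-\tfrac{d}{2\eta}$, and the analogous bound on the $L^\gamma$-component; combining them yields \eqref{estimate_semigroup}. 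For the smoothing estimates \eqref{eq:regularization_semigroup_lplq1}--\eqref{eq:regularization_semigroup_lplq2} I would invoke Young's inequality in the sharper form $\|G_\tau\ast f\|_{L^{\eta'}}\leq \|G_\tau\|_{L^s}\|f\|_{L^\eta}$ with $\tfrac{1}{s}=1-\tfrac{1}{\eta}+\tfrac{1}{\eta'}$; a direct Gaussian integral computation gives $\|G_\tau\|_{L^s}\simeq \alpha(\tau)^{-(d/2)(1/\eta-1/\eta')}$, and combining this with the dilation scaling and the scalar prefactor produces the claimed bound on $(0,2]$, where bounded exponential factors are absorbed into the implicit constant. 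The compatibility condition $\tfrac{1}{\eta}-\tfrac{1}{\eta'}=\tfrac{1}{\gamma}-\tfrac{1}{\gamma'}$ is precisely what allows the same Young exponent $s$ to govern both components of the intersection-space bound simultaneously.

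To finish, I would establish strong continuity first on $\mathscr{S}_{rad}$, where $G_\tau\ast u_0\to u_0$ and $u_0(e^{\tau/2}\,\cdot\,)\to u_0$ in $L^{\eta,\gamma}$ as $\tau\to 0^+$ by standard dominated convergence, and then extend to all of $L^{\eta,\gamma}$ using the uniform operator bound on compact time intervals via an $\eps/3$ argument. To identify the generator, I would differentiate the explicit formula at $\tau=0$ on $u_0\in\mathscr{S}_{rad}$: the product rule gives $\tfrac{1}{p-1}u_0$ from the prefactor, $\alpha'(0)\Delta u_0=\Delta u_0$ from the convolution factor via $\partial_\tau G_\tau=\alpha'(\tau)\Delta G_\tau$, and $\tfrac{1}{2}\rho\partial_\rho u_0$ from the dilation, and using the radial form $\Delta=\partial_\rho^2+\tfrac{d-1}{\rho}\partial_\rho$ these sum exactly to $L_0 u_0$. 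Since $\mathscr{S}_{rad}$ is dense in $L^{\eta,\gamma}$ and invariant under each $S_0(\tau)$ (Gaussian convolution and dilations both preserve radial Schwartz functions), a standard core theorem for $C_0$-semigroups identifies $\overline{L_0|_{\mathscr{S}_{rad}}}$ with the full generator, which in particular establishes the closability claim. The main technical obstacle I foresee is the bookkeeping in step (ii): propagating the smoothing estimate cleanly through the intersection-space norm without rate loss, which relies critically on the compatibility condition; the remaining ingredients are largely mechanical once the above factorization of $S_0(\tau)$ is in place.
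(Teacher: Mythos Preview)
Your approach coincides with the paper's: both derive \eqref{semigroup_explicit_expression} from the similarity change of variables applied to the heat semigroup, obtain the smoothing estimates via Young's convolution inequality, and identify the generator through the core criterion (density and $S_0$-invariance of $\mathscr{S}_{rad}$). There is, however, one genuine gap.

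Your derivation of the growth bound \eqref{estimate_semigroup} does not go through as written. The factorization gives $\|S_0(\tau)u_0\|_{L^\eta}\leq e^{(\frac{1}{p-1}-\frac{d}{2\eta})\tau}\|u_0\|_{L^\eta}$ and, analogously, $\|S_0(\tau)u_0\|_{L^\gamma}\leq e^{(\frac{1}{p-1}-\frac{d}{2\gamma})\tau}\|u_0\|_{L^\gamma}$. Since $\eta\leq\gamma$, the second exponent is the \emph{larger} one, so simply ``combining them'' only yields the intersection-space bound with rate $\frac{1}{p-1}-\frac{d}{2\gamma}$, not the sharper rate $\frac{1}{p-1}-\frac{d}{2\eta}$ that \eqref{estimate_semigroup} asserts. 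This matters downstream: the subsequent spectral analysis needs $S_0$ to decay exponentially precisely when $\eta<q_c$, and your bound loses exactly that. The fix, which is what the paper does, is to derive \eqref{estimate_semigroup} \emph{from} the smoothing estimate rather than before it: your Young-inequality computation in fact yields
\[
\|S_0(\tau)u_0\|_{L^\gamma}\lesssim e^{(\frac{1}{p-1}-\frac{d}{2\gamma})\tau}\,\alpha(\tau)^{-\frac{d}{2}(\frac{1}{\eta}-\frac{1}{\gamma})}\|u_0\|_{L^\eta}
\]
for all $\tau>0$, and for large $\tau$ one has $\alpha(\tau)\sim e^\tau$, so the net exponential rate collapses to $\frac{1}{p-1}-\frac{d}{2\eta}$. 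Combining this with the $L^\gamma\to L^\gamma$ bound for bounded $\tau$ then gives \eqref{estimate_semigroup}.
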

	\begin{proof}
		The explicit expression \eqref{semigroup_explicit_expression} is simply obtained through self-similar scaling of the solution for the linear heat equation. More explicitly, we observe that $v(t,x)$ solving
		\begin{align*}
			\begin{cases}
				\partial_t v=\Delta v\quad t>0,~x\in\R^d,\\
				v(0,\cdot)=u_0,
			\end{cases}
		\end{align*}
		can be written as $v(t,x)=\frac{1}{(t+1)^{\frac{1}{p-1}}}u \big(\ln(t+1),\frac{x}{\sqrt{t+1}} \big)$, where $u=u(\tau,\xi)$ solves
		\begin{align}\label{system_self_similar_variables}
			\begin{cases}
				\partial_\tau u=L_0 u \quad \tau>0,~\xi\in\R^d, \\
				u(0,\cdot)=u_0.
			\end{cases}
		\end{align}
		Therefore \eqref{semigroup_explicit_expression} follows easily by letting $t=e^{\tau}-1$ and $\ x=\xi \sqrt{e^{\tau}}$ in
		\begin{align}\label{representation_ss_semigroup}
			u(\xi, \tau)& =(t+1)^{\frac{1}{p-1}}v(t,x)\notag \\ &=(t+1)^{\frac{1}{p-1}}\int_{\R^d} \frac{e^{-\frac{\lvert x-y\rvert^2}{4t}}}{(4\pi t)^{\frac{d}{2}}} u_0(y) dy.
		\end{align}
		The representation formula \eqref{representation_ss_semigroup} for solutions of \eqref{system_self_similar_variables}, and the properties of the heat semigroup on $\R^d$ then imply that the family of operators $(S_0(\tau))_{\tau\geq 0}$ is a strongly continuous semigroup on $L^{\eta,\gamma}$; we omit the elementary computations. Let us denote by $(A, \mathcal{D}(A))$ its infinitesimal generator. Again, from the representation formula \eqref{representation_ss_semigroup} and the fact that $\mathscr{S}_{rad}$ is left invariant by the heat semigroup on $\R^d$, it follows that $A|_{\mathscr{S}_{rad}}=L_0|_{\mathscr{S}_{rad}}$ and $\mathscr{S}_{rad}$ is a core for $(A,\mathcal{D}(A))$; see, e.g., \cite[Proposition II.1.7, p.~53]{EngNag00}. The latter implies that $(L_0, \mathcal{D}(L_0))$ is closable in $L^{\eta,\gamma}$, and its closure is the infinitesimal generator of $S_0(\tau).$
		
		Concerning the growth bounds and regularization properties of $S_0(\tau)$, starting from \eqref{semigroup_explicit_expression}, for each $1\leq \eta\leq \theta\leq \infty$ we have by Young's inequality for convolutions that
		\begin{align*}
			\lVert S_0(\tau)u_0\rVert_{L^{\theta}_{rad}}&=e^{\left(\frac{1}{p-1}-\frac{d}{2\theta}\right)\tau} \lVert G_{\tau}\ast u_0\rVert_{L^{\theta}_{rad}}\\ & \leq e^{\left(\frac{1}{p-1}-\frac{d}{2\theta}\right)\tau}  \lVert G_{\tau}\rVert_{L^{\frac{\theta\eta}{\theta\eta+\eta-\theta}}}\lVert u_0\rVert_{L^{\eta}_{rad}}\\ & \lesssim 
			\frac{ e^{\left(\frac{1}{p-1}-\frac{d}{2\theta}\right)\tau} }{\alpha(\tau)^{\frac{d}{2}\left(\frac{1}{\eta}-\frac{1}{\theta}\right)}}
			\lVert u_0\rVert_{L^{\eta}_{rad}},
		\end{align*}
		for all $u_0 \in L^{\eta}$ and $\tau \geq 0$.
		This yields \eqref{eq:regularization_semigroup_lplq1}. Finally, we obtain \eqref{estimate_semigroup} and \eqref{eq:regularization_semigroup_lplq2} from \eqref{eq:regularization_semigroup_lplq1} by separately treating small and large values of $\tau$.
	\end{proof}
	\begin{remark}
		%Observe that the implied constants appearing in the estimates \eqref{estimate_semigroup}-\eqref{eq:regularization_semigroup_lplq2}-\eqref{eq:regularization_semigroup_lplq2} depend only on $\eta$. In order to obtain this behavior we needed to use \eqref{eq:regularization_semigroup_lplq1}. This is crucial for the following arguments and forced us to work in this setting made by intersection of spaces of integrable functions. 
		Note that $\frac{1}{p-1}-\frac{d}{2\eta} < 0$ if and only if $\eta < q_c$. The semigroup $S_0$ therefore has exponential decay in $L^{\eta,\gamma}$ exactly when $\eta < q_c$. In other words, the linear flow of \eqref{Eq:Sim_var_v} exhibits exponential decay in spaces $L^{\eta,\gamma}$ precisely for supercritical exponents $\eta$. This is in stark contrast to the linear flow of \eqref{Eq:Heat_radial}, which admits in $L^{\eta,\gamma}$ no exponential decay whatsoever. 
	\end{remark}
	
	Now we proceed with the analysis of the linear flow of \eqref{Eq:Sim_var_w}. The multiplication operator $V_\alpha:L^{\eta,\gamma} \rightarrow L^{\eta,\gamma}$ is obviously bounded, thanks to Proposition \ref{Prop:Har-Wei}, and the operator $L_\alpha:=L_0+V_\alpha$, $\mathcal{D}(L_\alpha):=\mathcal{D}(L_0)$ therefore generates a semigroup in $L^{\eta,\gamma}$.
	We, in fact, have the following result.
	\begin{proposition}\label{Prop:difference_compact}
		Let $1\leq \eta\leq \gamma$. Then for each $\alpha>0$ the operator $L_\alpha:\mathcal{D}(L_\alpha) \subseteq L^{\eta,\gamma} \rightarrow L^{\eta,\gamma}$ generates a one-parameter strongly continuous semigroup $\left(S_{\alpha}(\tau)\right)_{\tau \geq 0}\subseteq \mathcal{L}(L^{\eta,\gamma})$. Furthermore, the difference $S_{\alpha}(\tau)-S_0(\tau)$ is a compact operator on $L^{\eta,\gamma}$ for all $\tau \geq 0$.
	\end{proposition}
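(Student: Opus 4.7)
The first assertion follows from bounded perturbation theory. By Proposition \ref{Prop:Har-Wei}, $U_\alpha$ is bounded, and hence $V_\alpha = p|U_\alpha|^{p-1}$ acts as a bounded multiplication operator on $L^{\eta,\gamma}$. The standard bounded perturbation theorem (e.g.~\cite[Theorem III.1.3]{EngNag00}) then yields that $L_\alpha = L_0 + V_\alpha$ with $\mathcal{D}(L_\alpha) = \mathcal{D}(L_0)$ generates a strongly continuous semigroup $(S_\alpha(\tau))_{\tau \geq 0}$ on $L^{\eta,\gamma}$.

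For the compactness of $S_\alpha(\tau) - S_0(\tau)$ (the case $\tau=0$ being trivial), the plan is to combine Duhamel's formula
\begin{equation*}
S_\alpha(\tau) - S_0(\tau) = \int_0^\tau S_0(\tau-s)\, V_\alpha\, S_\alpha(s) \, ds
\end{equation*}
with the key lemma that $S_0(t) M_{V_\alpha}$ is compact on $L^{\eta,\gamma}$ for every $t > 0$. To prove the lemma I would exploit two structural facts: Proposition \ref{Prop:Har-Wei} yields $V_\alpha \in C_0(\R^d)$ (since $U_\alpha$ is bounded and $U_\alpha(\rho) \to 0$ as $\rho \to \infty$), and the explicit expression \eqref{semigroup_explicit_expression} shows $S_0(t)$ to be a smoothing convolution-type operator. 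Truncating $V_\alpha$ to $V^R := V_\alpha \chi_{B_R}$, the operator $S_0(t) M_{V^R}$ maps bounded subsets of $L^{\eta,\gamma}$ into families of smooth functions with uniform $C^1$ bounds and Gaussian decay at infinity — the latter because $V^R u$ is supported in $B_R$, and convolving a compactly supported bounded function with a Gaussian kernel yields uniform Gaussian decay outside $B_R$. The Fr\'echet-Kolmogorov criterion then makes such families relatively compact in $L^{\eta,\gamma}$, so $S_0(t) M_{V^R}$ is compact; since $\|V_\alpha - V^R\|_{L^\infty} \to 0$ as $R \to \infty$, the operator $S_0(t) M_{V_\alpha}$ is a norm limit of compact operators, hence compact.

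Given the lemma, the integrand $S_0(\tau-s)\, V_\alpha\, S_\alpha(s)$ is compact for every $s \in [0,\tau)$. To promote this to compactness of the integral, I would truncate at $s = \tau - \delta$ and use the factorization $S_0(\tau-s)\, V_\alpha = S_0(\tau - s - \delta/2)\,\bigl(S_0(\delta/2)\, V_\alpha\bigr)$ on $[0, \tau - \delta]$, where $S_0(\delta/2)\, V_\alpha$ is a fixed compact operator. Because strongly continuous semigroups converge uniformly on precompact sets, one obtains operator-norm continuity of $s \mapsto S_0(\tau-s)\, V_\alpha$ on $[0, \tau - \delta]$; composing with the uniformly bounded $S_\alpha(s)$ and approximating by Riemann sums of compact operators then gives compactness of the truncated integral. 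The uniform operator bound on the integrand makes the remainder $\int_{\tau-\delta}^\tau$ small in operator norm as $\delta \to 0$, so the full integral is a norm limit of compact operators. The main obstacle is precisely this last step, in which compactness has to be transferred from the integrand to the integral despite $s \mapsto S_\alpha(s)$ being only strongly (not norm) continuous; the factorization above is designed precisely to gain the missing operator-norm continuity on the compact factor before pairing it with $S_\alpha(s)$.
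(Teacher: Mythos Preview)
Your first paragraph (bounded perturbation) matches the paper exactly. Your key lemma that $S_0(t)M_{V_\alpha}$ is compact is correct and is essentially the mirror image of what the paper proves: they use the other Duhamel ordering $S_\alpha(\tau)-S_0(\tau)=\int_0^\tau S_\alpha(\tau-s)V_\alpha S_0(s)\,ds$ and show instead that $M_{V_\alpha}S_0(s)$ is compact, via $W^{1,\eta}\cap W^{1,\gamma}$ smoothing of $S_0(s)$, compact Sobolev embedding on balls, and the $O(\rho^{-2})$ decay of $V_\alpha$ for the tail. Your Fr\'echet--Kolmogorov route to the same conclusion is fine.

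The gap is in your Riemann-sum step. You correctly obtain norm-continuity of $s\mapsto S_0(\tau-s)V_\alpha$ on $[0,\tau-\delta]$ from the factorization through the fixed compact $K=S_0(\delta/2)V_\alpha$. But this does \emph{not} yield norm-continuity of the full integrand $s\mapsto S_0(\tau-s)V_\alpha\,S_\alpha(s)$: the leftover term $K\bigl(S_\alpha(s)-S_\alpha(s')\bigr)$ need not vanish in operator norm. Compactness of $K$ on the \emph{left} of a strongly convergent family is not enough; the implication ``$T_n\to T$ strongly, $K$ compact $\Rightarrow KT_n\to KT$ in norm'' is false in general (take $K$ rank one, $T_n$ the $n$-fold left shift on $\ell^2$). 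Consequently the Riemann sums converge only strongly, and strong limits of compact operators are not compact.

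The paper avoids this by never needing operator-norm convergence: with $S_\alpha(\tau-s)$ on the outside and $V_\alpha S_0(s)$ inside, one applies the integrand to a weakly convergent sequence $u_n\rightharpoonup u$, observes that $\|V_\alpha S_0(s)(u_n-u)\|_{L^{\eta,\gamma}}\to 0$ for each fixed $s$, bounds the integrand uniformly, and invokes dominated convergence. Your ordering can be salvaged the same way (since $S_\alpha(s)$ is weak--weak continuous and $S_0(\tau-s)V_\alpha$ is compact), or alternatively by noting that the integrand takes values in the compact set $\{S_0(r)x:r\in[0,\tau-\delta/2],\ x\in\overline{K(B_M)}\}$, whose closed convex hull is compact and contains the integral; but the Riemann-sum mechanism you wrote does not work as stated.
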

	\begin{proof}
		We divide the proof in several steps.
		
		\noindent \emph{Step 1: $L_{\alpha}$ generates a semigroup.} 
		The first part of the proposition follows from the bounded perturbation theorem \cite[Theorem III.1.3, p.~158]{EngNag00}; in particular, we have
		\begin{align*}
			\lVert S_{{\alpha}}(\tau)u_0\rVert_{L^{\eta,\gamma}}\leq M e^{\left(\frac{1}{p-1}-\frac{d}{2\eta}+M\norm{V_{\alpha}}_{\mathcal{L}(L^{\eta,\gamma})}\right)\tau}\norm{u_0}_{L^{\eta,\gamma}},
		\end{align*}
		for $\tau\geq 0$, where $M$ is the constant appearing in \eqref{estimate_semigroup}.\\
		\emph{Step 2: Additional regularization for $S_0$.} 
		We use again the link between the heat semigroup, $P(t)$, and $S_0(\tau)$ given by \eqref{representation_ss_semigroup}. In particular
		$v(t)=P(t)u_0$ is smooth for positive times and it holds
		\begin{align*}
			\norm{v(t)}_{L^{\eta,\gamma}}+\sqrt{t}\norm{\nabla v(t)}_{L^{\eta,\gamma}}\lesssim \norm{u_0}_{L^{\eta,\gamma}}\quad \text{for} \quad t\in (0,e^2-1).
		\end{align*}
		The latter implies that $u(\tau)=S_0(\tau)u_0\in W^{1,\eta}\cap W^{1,\gamma}(\R^d)$ and
		\begin{align}\label{W_1 estimates_S0}
			\sqrt{\tau}\norm{\nabla u(\tau)}_{L^{\eta}\cap L^{\gamma}}\lesssim \norm{u_0}_{L^{\eta,\gamma}}\quad \text{for} \quad  \tau\in (0,2).
		\end{align}\\
		\emph{Step 3: End of the proof.} Let us consider a sequence $\{ u_n \}_{n \in \N} \subseteq L^{\eta,\gamma} $ for which $\norm{u_n}_{L^{\eta,\gamma}}\leq C$ and $u_n\rightharpoonup u$ in $L^{\eta,\gamma}$. By Duhamel's formula and the definition of $S_0$ and $S_{\alpha}$ we have
		\begin{align*}
			S_{0}(\tau)(u_n-u)-S_{\alpha}(\tau)(u_n-u)=-p\int_0^\tau S_{\alpha}(\tau-s)[\lvert \bar{U}_{\alpha}\rvert^{p-1}S_0(s)(u_n-u)]ds.
		\end{align*}
		Due to Propositions \ref{Prop_Semigroup_S0}, and \ref{Prop:Har-Wei}, and Step 1, we have
		\begin{align*}
			\norm{S_{\alpha}(\tau-s)[\lvert \bar{U}_{\alpha}\rvert^{p-1}S_0(s)(u_n-u)]}_{L^{\eta,\gamma}} \leq  2\alpha^{p-1}M^2 C e^{\left(\frac{1}{p-1}-\frac{d}{2\eta}\right)} e^{M\norm{V_{\alpha}}_{\mathcal{L}(L^{\eta,\gamma})}(\tau-s)},
		\end{align*}
		which is uniformly in $n$ integrable in $s$ on $(0,\tau)$.
		In addition, we prove that for each $s\in (0,\tau)$
		\begin{align*}
			\norm{S_{\alpha}(\tau-s)[\lvert \bar{U}_{\alpha}\rvert^{p-1}S_0(s)(u_n-u)]}_{L^{\eta,\gamma}}\rightarrow 0 \quad \text{as} \quad n \rightarrow \infty.
		\end{align*}
		The two relations above imply the required compactness by dominated convergence theorem. 
		According to Step 1, it is enough to prove 
		\begin{align}\label{claim_lebesgue}
			\norm{\lvert \bar{U}_{\alpha}\rvert^{p-1}S_0(s)(u_n-u)}_{L^{\eta,\gamma}}\rightarrow 0.
		\end{align}
		Thanks to to Proposition \ref{Prop:Har-Wei}, there exists $\bar{R}$ large enough such that for each $R\geq \bar{R}$ 
		\begin{align*}
			\lvert \bar{U}_{\alpha}(\xi)\rvert^{p-1}\leq \frac{2\ell(\alpha)^{p-1}}{R^2}\quad \text{if} \quad  \lvert \xi\rvert>R.
		\end{align*}
		Moreover, due to Step 2, $S_0(s)u_n\in W^{1,\eta}\cap W^{1,\gamma}(\R^d)$ and 
		\begin{multline}
			\sqrt{s}\left(\norm{S_0(s)u_n}_{W^{1,\eta}(B_R)}+\norm{S_0(s)u_n}_{W^{1,\gamma}(B_R)}\right) \lesssim \left(\norm{S_0(s)u_n}_{L^{\eta}}+\norm{S_0(s)u_n}_{L^{\gamma}}\right)\lesssim C,
		\end{multline}
		for all $n \in \N$.
		Therefore, since the embedding of $W^{1,\eta}(B_R)\cap W^{1,\gamma}(B_R) $ in $L^{\eta}(B_R)\cap L^{\gamma}(B_R)$ is compact, the weak convergence of $u_n$ to $u$ implies
		\begin{align*}
			\norm{S_0(s)(u_n-u)}_{L^{\eta}(B_R)\cap L^{\gamma}(B_R)}\rightarrow 0\quad \text{for} \quad  s>0.
		\end{align*}
		Combining the two relations above we have
		\begin{align*}
			\limsup_{n\rightarrow+\infty}\norm{\lvert \bar{U}_{\alpha}\rvert^{p-1}S_0(s)(u_n-u)}_{L^{\eta,\gamma}}\lesssim \frac{1}{R^2}+\limsup_{n\rightarrow+\infty}\norm{S_0(s)(u_n-u)}_{L^{\eta}(B_R)\cap L^{\gamma}(B_R)}.
		\end{align*}
		Due to the arbitrariness of $R$, relation \eqref{claim_lebesgue} follows and the proof is complete.
	\end{proof}
	
	Proposition \ref{Prop:difference_compact}  tells us that the essential spectra (and therefore the essential growth bounds) of $S_{\alpha}(\tau)$ and $S_0(\tau)$ are the same for $\tau \geq 0$. In particular, $\sigma(S_{\alpha}(\tau))\setminus \sigma(S_0(\tau))$ consists of isolated eigenvalues of finite algebraic multiplicity. Consequently, in view of the spectral mapping theorem for the point spectrum, to understand growth of $S_{\alpha}$ relative to $S_0$ it suffices to analyze the point spectrum of $L_\alpha$. 
	
	\subsection{Spectral analysis of $L_\alpha$}
	In this section we analyze the spectrum of $L_\alpha : \mathcal{D}(L_\alpha) \subseteq L^{\eta,\gamma} \rightarrow L^{\eta,\gamma}$ for $\alpha\geq 0$. From \eqref{estimate_semigroup} we see that
	\begin{equation}\label{Eq:Spec_L_0}
		\sigma(L_0) \subseteq \big\{ \la \in \mathbb{C} ~\lvert~ \Re \la \leq \tfrac{1}{p-1}-\tfrac{d}{2 \eta} \big\}.
	\end{equation}
	In the sequel, we will consider spaces $L^{\eta,\gamma}$ for which $\eta < q_c$, where $q_c$ is the critical exponent from \eqref{Def:q_c}. This, in particular, implies that $\tfrac{1}{p-1}-\tfrac{d}{2 \eta}<0$ and therefore $\sigma(L_0)$ is strictly contained in the open left half-plane.
	The main result of this section is as follows.
	\begin{theorem}\label{Prop:Spectral_properties_L}
		Assume that
		\begin{equation}
			d \geq 3, \quad p>1+\frac{2}{d}, \quad  \alpha>0, \quad  1 \leq \eta < \frac{d(p-1)}{2}, \quad \text{and} \quad  \eta \leq \gamma.
		\end{equation}
		Then for the operator $L_\alpha : \mathcal{D}(L_\alpha) \subseteq L^{\eta,\gamma} \rightarrow L^{\eta,\gamma}$ the following statements hold.
		\begin{itemize}[leftmargin=8mm]
			\setlength{\itemsep}{2mm}
			\item[1.] The set
			\begin{equation}\label{Eq:unstab_spectr}
				\sigma(L_\alpha) \cap \big\{ \la \in \mathbb{C}~ \lvert~ \Re \la > \tfrac{1}{p-1}-\tfrac{d}{2 \eta} \big\}
			\end{equation}
			consists of finitely many real eigenvalues.
			\item [2.] If $p < p_{JL}$ then for every $\varepsilon>0$ there exists $\alpha>0$ such that $L_\alpha$ admits at least one positive eigenvalue, and furthermore all positive eigenvalues are smaller than $\varepsilon$.

			\item[3.] If $p \geq p_{JL}$ then for every $\alpha>0$ the operator $L_\alpha$ admits no positive eigenvalues.
		\end{itemize}
	\end{theorem}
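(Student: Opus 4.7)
Combining Proposition~\ref{Prop:difference_compact} with the inclusion \eqref{Eq:Spec_L_0}, the compactness of $S_\alpha(\tau)-S_0(\tau)$ implies that the essential spectra of $S_\alpha(\tau)$ and $S_0(\tau)$ coincide, so by the spectral mapping theorem for the point spectrum, the set in \eqref{Eq:unstab_spectr} consists of isolated eigenvalues of finite algebraic multiplicity whose only possible accumulation points lie inside $\sigma(L_0)$; finiteness follows. For reality, I would reduce to the ODE $L_\alpha w=\lambda w$. The solution space regular at $\rho=0$ is one-dimensional, and an asymptotic analysis at $\rho\to\infty$ (using $V_\alpha(\rho)\to 0$) identifies two linearly independent behaviors: a slow algebraic tail $\sim \rho^{-2/(p-1)+2\lambda}$ and a Gaussian-decay tail $\sim e^{-\rho^2/4}\rho^{\kappa}$. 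Membership in $L^{\eta,\gamma}$ with $\eta<q_c$ forces the eigenfunction to be the Gaussian one, placing $w$ in the weighted Hilbert space $L^2((0,\infty),\rho^{d-1}e^{\rho^2/4}\,d\rho)$, on which $L_\alpha$ is formally symmetric; hence $\lambda\in\R$.

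\textbf{Part 2 (arbitrarily small positive eigenvalues for $p<p_{JL}$).} Differentiating the expander ODE \eqref{Eq:Expander_eq_radial} in $\alpha$ yields $L_\alpha T_\alpha=0$ with $T_\alpha:=\partial_\alpha U_\alpha$, so $T_\alpha$ is always a formal null eigenfunction. By Proposition~\ref{Prop:Har-Wei} and smooth dependence on $\alpha$, one has $T_\alpha(\rho)\sim \ell'(\alpha)\rho^{-2/(p-1)}$ at infinity, which fails to lie in $L^{\eta,\gamma}$ (for $\eta<q_c$) unless $\ell'(\alpha)=0$; when $\ell'(\alpha)=0$ the next-order asymptotic $\sim\rho^{-2p/(p-1)}$ kicks in and secures integrability. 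Naito's analysis in \cite{Nai06} ensures that in the range $p<p_{JL}$ the map $\alpha\mapsto\ell(\alpha)$ is non-monotone and admits critical points $\alpha^\ast$; at such $\alpha^\ast$ the function $T_{\alpha^\ast}$ lies in $L^{\eta,\gamma}$, so $0$ is a genuine eigenvalue of $L_{\alpha^\ast}$. Analytic perturbation of isolated eigenvalues via Riesz projections gives a continuous eigenvalue branch $\alpha\mapsto\lambda(\alpha)$ through $\lambda(\alpha^\ast)=0$; a Fredholm-type identity expressing $\dot\lambda(\alpha^\ast)$ in terms of $\ell''(\alpha^\ast)$ fixes the sign and shows that on one side of $\alpha^\ast$ the branch enters $(0,\varepsilon)$ for arbitrarily small $\varepsilon$.

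\textbf{Part 3 (no positive eigenvalues for $p\geq p_{JL}$).} In the only non-trivial range $d\geq 11$, the condition $p\geq p_{JL}$ is equivalent to the Hardy-type inequality $pc_p^{p-1}\leq (d-2)^2/4$, where $c_p^{p-1}=\tfrac{2}{p-1}(d-2-\tfrac{2}{p-1})$ characterizes the singular Lane-Emden profile $c_p\rho^{-2/(p-1)}$; note also $p_{JL}>1+4/d$ in this regime. I would first invoke the intersection-number theory for the expander ODE to establish the pointwise bound $U_\alpha(\rho)\leq c_p\,\rho^{-2/(p-1)}$ for all $\rho>0$ and $\alpha>0$ when $p\geq p_{JL}$, hence $V_\alpha\leq pc_p^{p-1}/\rho^2$. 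Working in the weighted space with $g(\rho)=\rho^{d-1}e^{\rho^2/4}$, integration by parts gives $\langle L_\alpha w, w\rangle_g = -\int |w'|^2 g\,d\rho + \int(\tfrac{1}{p-1}+V_\alpha)|w|^2 g\,d\rho$; the substitution $\tilde w=e^{\rho^2/8}w$ turns the gradient term into an unweighted one amenable to the sharp radial Hardy inequality on $\R^d$, and combining with the Hardy bound on $V_\alpha$ together with $p>1+4/d$ absorbs the constant $\tfrac{1}{p-1}$ into the positive harmonic-oscillator term produced by the conjugation, yielding $\langle L_\alpha w,w\rangle_g\leq 0$ and excluding positive eigenvalues.

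\textbf{Main obstacle.} I expect Part 2 to be the crux. Converting Naito's geometric information about $\ell(\alpha)$ into a genuine spectral statement for $L_\alpha$ demands careful perturbation analysis near $\alpha^\ast$, and one must additionally verify that the small positive eigenvalue produced is the \emph{largest} one (so that \emph{all} positive eigenvalues are smaller than $\varepsilon$). This likely requires a Sturm-oscillation bookkeeping of eigenvalue counts as a function of $\alpha$, linking the number of intersections of $U_\alpha$ with the singular profile (which is finite and controlled in the subcritical regime) to the dimension of the unstable subspace of $L_\alpha$.
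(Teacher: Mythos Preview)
Your Part~1 is essentially the paper's argument: reduce to the ODE, identify the two asymptotic behaviors at infinity, and observe that the admissible (Gaussian) one places the eigenfunction in the weighted space $L^2_\omega$, $\omega=\rho^{d-1}e^{\rho^2/4}$, where $L_\alpha$ is self-adjoint with compact resolvent. One small remark: when $\ell'(\alpha)=0$ the solution $T_\alpha$ does not have ``next-order asymptotic $\sim\rho^{-2p/(p-1)}$''; rather the $f_1$-component vanishes entirely and $T_\alpha$ inherits the Gaussian decay of $f_2$. The conclusion is unaffected.

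Your Part~3 is a different, more quantitative route than the paper's (which simply cites Naito's result that the positivity threshold $\alpha^\ast$ equals $+\infty$ for $p\ge p_{JL}$, and then invokes Sturm oscillation). Your Hardy-inequality computation is correct in outline, but note that the pointwise bound $|U_\alpha(\rho)|\le c_p\rho^{-2/(p-1)}$ is itself the hard input here and is essentially equivalent to Naito's $\alpha^\ast=\infty$; so your argument is not more elementary, just differently packaged.

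The real divergence is in Part~2, and here the paper's approach resolves precisely the obstacle you flag. The paper does not use analytic perturbation of an eigenvalue branch at all. Instead it exploits two consequences of self-adjointness in $L^2_\omega$ that you set up in Part~1 but then abandon. First, Sturm--Liouville oscillation: the number of positive eigenvalues of $L_\alpha$ equals the number of zeros of the unique solution to $L_\alpha f=0$, $f(0)=1$, $f'(0)=0$ (which is exactly your $T_\alpha$). Naito shows there is a threshold $\alpha^\ast<\infty$ (for $p<p_{JL}$) such that $T_\alpha>0$ for $\alpha\le\alpha^\ast$ and $T_\alpha$ has at least one zero for $\alpha>\alpha^\ast$; moreover at $\alpha=\alpha^\ast$ the solution $T_{\alpha^\ast}$ has Gaussian decay, so $\lambda=0$ is the top eigenvalue of $L_{\alpha^\ast}$. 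This already gives existence of a positive eigenvalue for every $\alpha>\alpha^\ast$, with no perturbation theory. Second, the variational (Rayleigh) characterization of the top eigenvalue immediately bounds \emph{all} positive eigenvalues at once:
\[
\lambda_\alpha \;=\; \sup_{\|f\|_{L^2_\omega}=1}\langle L_\alpha f,f\rangle_{L^2_\omega}
\;\le\; \sup_{\|f\|_{L^2_\omega}=1}\langle L_{\alpha^\ast} f,f\rangle_{L^2_\omega} + \|V_\alpha-V_{\alpha^\ast}\|_{L^\infty}
\;=\; 0 + \|V_\alpha-V_{\alpha^\ast}\|_{L^\infty},
\]
and the right-hand side tends to $0$ as $\alpha\downarrow\alpha^\ast$ by continuous dependence of $U_\alpha$ on $\alpha$. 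This one-line estimate replaces your proposed Fredholm identity, sign analysis of $\ell''(\alpha^\ast)$, and Sturm bookkeeping of eigenvalue counts; in particular it shows directly that the largest positive eigenvalue---hence every positive eigenvalue---is smaller than $\varepsilon$ for $\alpha$ close enough to $\alpha^\ast$.
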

	\begin{proof}
		According to the spectral mapping theorem for the point spectrum (see, e.g., \cite[Theorem IV.3.7, p.~277]{EngNag00}), Proposition \ref{Prop:difference_compact} implies that the set \eqref{Eq:unstab_spectr} consists of eigenvalues. To prove the rest of the Claim 1, we do the following. Assume that there are $f \in \mathcal D(L_{\alpha})$ and $\la \in \mathbb{C}$ such that
		\begin{equation}\label{Eq:Re_la}
			\Re \la > \frac{1}{p-1}-\frac{d}{2 \eta}
		\end{equation}
		and
		\begin{equation}\label{Eq:Spec_eq}
			L_\alpha f - \la f=0.
		\end{equation}
		Let us recall the operator $L_\alpha$
		\begin{equation*}
			L_\alpha  = \partial_\rho^2 +\frac{d-1}{\rho} \partial_\rho + \frac{1}{2}\rho \partial_\rho + \frac{1}{p-1} + V_\alpha.
		\end{equation*}
		This means that $f$ solves the following ODE
		\begin{equation}\label{Eq:Eigenv_ODE}
			f'' + \left( \frac{d-1}{\rho} +\frac{\rho}{2} \right)f' + \left(\frac{1}{p-1} +V_\alpha -\la \right)f=0.
		\end{equation}
		Linear ODE theory tells us that such $f$ belongs to $C^2(0,\infty)$. We therefore turn to analyzing the asymptotic behavior of $f$ at the endpoints.
		Note that $\rho=0$ is a regular singular point of \eqref{Eq:Eigenv_ODE}, and the corresponding set of Frobenius indices is $\{ 0,2-d \}$.
		This tells us that there are two linearly independent solutions that near $\rho=0$ have the following asymptotics 
		\begin{equation*}
			f_1(\rho) = 1+O(\rho^{2}) \quad \text{and} \quad   
			f_2(\rho) = \rho^{2-d}(1+o(1)).
		\end{equation*}
		The requirement that $f \in \mathcal{D}(L_\alpha)$ rules out the second behavior above, so $f$ must be (a non-zero constant multiple of) the unique solution to \eqref{Eq:Eigenv_ODE} that satisfies
		\begin{equation}\label{Eq:Init_condi}
			f(0)=1 \quad \text{and} \quad f'(0)=0.
		\end{equation}
		The point $\rho=\infty$ is an irregular singular point, and one therefore has to do some hands-on analysis to understand the behavior for large $\rho$. It turns out that \eqref{Eq:Eigenv_ODE} admits two linearly independent solution with the following asymptotics near $\rho=\infty$
		\begin{equation}\label{Eq:Asymp_inf}
			f_1(\rho) = \rho^{2\left(\la-\frac{1}{p-1}\right)}(1+O(\rho^{-2})) \quad \text{and} \quad   
			f_2(\rho) = \rho^{2\left(-\la-\frac{d}{2}+\frac{1}{p-1}\right)}e^{-\frac{\rho^2}{4}}(1+O(\rho^{-2})). 
		\end{equation}
		Now, due to the assumption \eqref{Eq:Re_la}, the requirement that $f \in L^{\eta,\gamma}$ singles out the second behavior above as the only admissible one. In summary, the eigenfunction $f$ is a constant multiple of the solution to \eqref{Eq:Eigenv_ODE} that satisfies \eqref{Eq:Init_condi} and furthermore exponentially decays at $\rho=\infty$ as described in \eqref{Eq:Asymp_inf}. Now that we determined the endpoint asymptotics of $f$, we consider \eqref{Eq:Spec_eq} from a different viewpoint.
		First, define
		\begin{equation*}
			\omega(\rho):=\rho^{d-1}e^{\frac{\rho^2}{4}}.
		\end{equation*}
		Then, note that we can write $L_\alpha$ in the following way
		\begin{equation*}
			L_\alpha  = \frac{1}{\omega}\partial_\rho \left( \omega \partial_\rho  \right) +\frac{1}{p-1} + V_\alpha.
		\end{equation*}
		This implies that $L_\alpha$ has a self-adjoint realization in the weighted $L^2$-space
		\begin{equation*}
			L^2_{\omega}:= \{ f:[0,\infty) \rightarrow \mathbb{C} ~\lvert~f \text{ is measurable and } \int_{[0,\infty)} |f|^2\omega < \infty \},
		\end{equation*}
		with the inner product
		\begin{equation}
			\langle f,g\rangle_{L^2_\omega} := \int_{[0,\infty)}f \bar{g} \omega.
		\end{equation}
		More precisely, the operator $L_\alpha$, when initially defined on $C^\infty_{c,rad}$, is closable in $L^2_\omega$ and its closure (which within this proof we also denote by $L_\alpha$) is a self-adjoint operator. Moreover, $L_\alpha$ has compact resolvent. Consequently, the spectrum of $L_\alpha: \mathcal{D}(L_\alpha) \subseteq L^2_{\omega} \rightarrow L^2_{\omega}$  consists of a discrete set of real simple eigenvalues that can accumulate only at $\infty$. Now, note that due to the exponential growth of the weight function $\omega$, the eigenfunctions of $L_\alpha$ necessarily exhibit the second behavior in \eqref{Eq:Asymp_inf}. Similarly, they have to be regular at $\rho=0$. Based on the first part of the proof, we conclude that the point spectra of $L_\alpha$ in $L^{\eta,\gamma}$ and $L^2_\omega$ match under the assumption \eqref{Eq:Re_la}. This implies Claim 1 of the proposition.
		
		To prove Claims 2 and 3, we employ Sturm-Liouville oscillation theory to $L_\alpha: \mathcal{D}(L_\alpha) \subseteq L^2_{\omega} \rightarrow L^2_{\omega}$. In addition, we will rely on several ODE results from \cite{Nai06}. To count the positive eigenvalues of $L_\alpha$, by Sturm-Liouville, it is enough to count the number of zeros of the unique function $f \in C^2[0,\infty)$ that satisfies
		\begin{equation}\label{Eq:Init_cond_sturm}
			L_\alpha f = 0, \quad f(0)=1, \quad f'(0)=0.
		\end{equation}
		By Theorem 1.1, Lemma 3.3, and Lemma 2.1 in \cite{Nai06} there exists $\alpha^\ast \leq \infty$ such that for all $\alpha < \alpha^\ast$ the solution to \eqref{Eq:Init_cond_sturm} is positive. Claim 3 then follows from the fact that  $\alpha^\ast = \infty$ for $p \geq p_{JL}$, as proven in Corollary 1.2 in \cite{Nai06}.
		It remains to prove Claim 2. First, by Corollary 1.2 in \cite{Nai06} we have that $\alpha < \infty$ for $p < p_{JL}$. Then, by
		Proposition 2.4 and Remark 3.7 in \cite{Nai06}, it follows that for $\alpha=\alpha^\ast$ the solution to \eqref{Eq:Init_cond_sturm} is positive and has exponential decay (due to the alternative \eqref{Eq:Asymp_inf}). This means that $\la=0$ is the only non-negative eigenvalue of $L_{\alpha^\ast}$. Now we prove that $L_{\alpha}$ admits a positive eigenvalue whenever $\alpha > \alpha^{\ast}$. First, for $\alpha > \alpha^\ast$, by Lemma 3.3-(ii) and Lemma 2.1-(i) in \cite{Nai06}, we get that the solution to \eqref{Eq:Init_cond_sturm} has at least one zero, implying that $L_\alpha$ has at least one positive eigenvalue. To prove that $\alpha>\alpha^\ast$ can be chosen such that all positive eigenvalues are arbitrarily small, we do the following. Denote the largest positive eigenvalue by $\lambda_\alpha$. Then
		\begin{align*}
			\la_\alpha = \la_\alpha - 0 
			&= \sup_{f \in C^\infty_{c,rad},\,\| f \|_{L^2_\omega}=1} \langle L_\alpha f,f \rangle_{L^2_\omega} -\sup_{f \in  C^\infty_{c,rad},\| f \|_{L^2_\omega}=1} \langle L_{\alpha^\ast} f,f \rangle_{L^2_\omega} \\
			&\leq \sup_{f \in  C^\infty_{c,rad},\| f \|_{L^2_\omega}=1} \langle L_\alpha f - L_{\alpha^\ast} f,f \rangle_{L^2_\omega} \\
			&= \sup_{f \in  C^\infty_{c,rad},\| f \|_{L^2_\omega}=1} \langle (V_\alpha-V_{\alpha^\ast}) f,f \rangle_{L^2_\omega}\\
			&\leq \| V_\alpha-V_{\alpha^\ast} \|_{L^\infty(0,\infty)}.
		\end{align*}
		Finally, due to the continuity of $\alpha \mapsto V_\alpha$  in $L^\infty(0,\infty)$ we arrive at Claim 2.
	\end{proof}
	
	Now, we turn the spectral information on $L_{\alpha}$ into the growth properties of $S_{\alpha}$.
	\begin{proposition}\label{prop:semigroup}
		Let $1\leq \eta \leq \gamma$ and $\eta  \leq q_c$. Let $\bar{\alpha}>0$ be such that  $L_{\bar{\alpha}}:\mathcal{D}(L_{\bar{\alpha}})\subseteq L^{\eta,\gamma} \rightarrow L^{\eta,\gamma}$ has at least one positive eigenvalue, and denote the largest such eigenvalue by $\la_{\bar{\alpha}}$. Given $\delta>0$ we have that
		\begin{align}\label{eq:decay_semigroup}
			\lVert S_{\bar{\alpha}}(\tau)u_0\rVert_{L^{\eta,\gamma}}\lesssim e^{(\lambda_{\bar{\alpha}} +\delta)\tau}\norm{u_0}_{L^{\eta,\gamma}}
		\end{align}
		for all $u_0 \in L^{\eta,\gamma}$ and $\tau \geq 0$.
		In particular,
		\begin{align}\label{eq:decay_semigroup1}
			\lVert S_{\bar{\alpha}}(\tau) u_0\rVert_{L^{\eta}_{rad}}\lesssim e^{(\lambda_{\bar{\alpha}} +\delta)\tau}\norm{u_0}_{L^{\eta}_{rad}}    
		\end{align}
		for all $u_0 \in L^{\eta}_{rad}$ and $\tau \geq 0$.
	\end{proposition}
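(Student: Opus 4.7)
The plan is to show that the growth bound of $S_{\bar\alpha}$ on $L^{\eta,\gamma}$ equals exactly $\lambda_{\bar\alpha}$, and then read \eqref{eq:decay_semigroup} off directly from the definition of the growth bound. The strategy separates the spectrum of $S_{\bar\alpha}(\tau)$ into an essential part, inherited from $S_0$, and a discrete part, which is controlled by the eigenvalues of $L_{\bar\alpha}$ already identified in Theorem~\ref{Prop:Spectral_properties_L}.

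First, I would apply Proposition~\ref{Prop:difference_compact} to deduce that $S_{\bar\alpha}(\tau) - S_0(\tau)$ is compact for every $\tau \geq 0$. Since compact perturbations preserve the essential spectral radius, the essential growth bounds satisfy $\omega_{ess}(S_{\bar\alpha}) = \omega_{ess}(S_0)$, and this is bounded from above by $\omega_0(S_0) \leq \frac{1}{p-1} - \frac{d}{2\eta} \leq 0$ by Proposition~\ref{Prop_Semigroup_S0} and the assumption $\eta \leq q_c$. Next, the spectral mapping theorem for the point spectrum (see \cite[Theorem IV.3.7]{EngNag00}), together with the fact that outside the essential spectrum the spectrum is purely discrete, implies that any $z \in \sigma(S_{\bar\alpha}(\tau))$ with $|z| > r_{ess}(S_{\bar\alpha}(\tau))$ is of the form $e^{\lambda \tau}$ for some $\lambda \in \sigma_{pt}(L_{\bar\alpha})$ satisfying $\Re \lambda > \omega_{ess}(S_{\bar\alpha})$. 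By Claim~1 of Theorem~\ref{Prop:Spectral_properties_L} there are only finitely many such $\lambda$, and they are real; by hypothesis the largest of them is the positive number $\lambda_{\bar\alpha}$. Applying the identity $\omega_0(S_{\bar\alpha}) = \max(\omega_{ess}(S_{\bar\alpha}), s(L_{\bar\alpha}))$ (see \cite[Corollary IV.2.11]{EngNag00}) and using $\lambda_{\bar\alpha} > 0 \geq \omega_{ess}(S_{\bar\alpha})$ then gives $\omega_0(S_{\bar\alpha}) = \lambda_{\bar\alpha}$, and \eqref{eq:decay_semigroup} follows from the very definition of $\omega_0$.

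For \eqref{eq:decay_semigroup1}, I would repeat the argument on $L^\eta_{rad} = L^{\eta,\eta}$; the only new observation needed is that the largest unstable eigenvalue of $L_{\bar\alpha}$ on $L^\eta_{rad}$ coincides with $\lambda_{\bar\alpha}$. This is immediate from the ODE analysis carried out in the proof of Theorem~\ref{Prop:Spectral_properties_L}: any eigenfunction corresponding to an eigenvalue with $\Re \lambda > \frac{1}{p-1} - \frac{d}{2\eta}$ must be the unique regular-at-the-origin solution of \eqref{Eq:Eigenv_ODE}, and it necessarily exhibits the Gaussian-decay tail in \eqref{Eq:Asymp_inf}. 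Consequently such an eigenfunction lies in every $L^r_{rad}$, $r \geq 1$, and the set of unstable eigenvalues is therefore the same regardless of whether one works on $L^{\eta,\gamma}$ or on $L^\eta_{rad}$.

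The hard part of the whole argument lies in Proposition~\ref{Prop:difference_compact} and Theorem~\ref{Prop:Spectral_properties_L}, which are already available; once these are in place I expect no serious obstacle, with the only delicate step being the identification $s(L_{\bar\alpha}) = \lambda_{\bar\alpha}$, which crucially uses the real-valuedness and finiteness of the unstable point spectrum. Should the abstract reference to \cite[Corollary IV.2.11]{EngNag00} be undesirable, one can argue directly via the Riesz projection onto the finite unstable set: on its range $S_{\bar\alpha}$ reduces to a matrix exponential whose largest exponent is $\lambda_{\bar\alpha}$, while on the complementary invariant subspace the essential growth bound $\omega_{ess}(S_{\bar\alpha}) \leq 0$ controls the evolution, yielding \eqref{eq:decay_semigroup} by summation.
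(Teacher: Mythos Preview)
Your proposal is correct and follows exactly the paper's approach: use the compactness of $S_{\bar\alpha}(\tau)-S_0(\tau)$ from Proposition~\ref{Prop:difference_compact} to equate the essential growth bounds, identify $s(L_{\bar\alpha})=\lambda_{\bar\alpha}$ via Theorem~\ref{Prop:Spectral_properties_L}, and conclude through $\omega_0(S_{\bar\alpha})=\max\{\omega_{ess}(S_{\bar\alpha}),s(L_{\bar\alpha})\}$. The paper's own proof is a terse three-line version of the same argument; your additional justification that the unstable spectrum on $L^\eta_{rad}=L^{\eta,\eta}$ coincides with that on $L^{\eta,\gamma}$ is correct and simply makes explicit what the paper leaves implicit.
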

	\begin{proof}
		From Propositions \ref{Prop:difference_compact} and \ref{Prop_Semigroup_S0} we get that $\omega_{ess}(S_{\bar{\alpha}}) = \omega_{ess}(S_0) \leq 0$. Then from \eqref{Eq:Spec_L_0} and Theorem \ref{Prop:Spectral_properties_L} we conclude that the spectral bound of $L_{\bar{\alpha}}$ is $s(L_{\bar{\alpha}})=\la_{\bar{\alpha}}$. The claim of the proposition then follows from Proposition \ref{Prop:difference_compact} and the fact that $\omega_0(S_{\bar{\alpha}})=\max \{ \omega_{ess}(S_{\bar{\alpha}}),s(L_{\bar{\alpha}}) \} = s(L_{\bar{\alpha}}) = \la_{\bar{\alpha}}.$ For the standard results from semigroup theory we implicitly invoked in the proof, see, e.g., \cite{EngNag00}, Section IV.2 in particular.
	\end{proof}
	\iffalse
	\begin{remark}
		Due to the assumption $\eta\leq q_c$, Proposition \ref{Prop_Semigroup_S0} implies that the norm of $S_0(\tau)$ in $L^{\eta,\gamma}$ exponentially decays. Therefore, the growth of $S_{\bar{\alpha}}$ provided by Proposition \ref{prop:semigroup} is only a consequence of the instability of the eigenvalue $\lambda_{\bar{\alpha}}$.
	\end{remark}
	
	\begin{remark}\label{rmk:eta>q_c}
		In case of $\eta>q_c$, since $S_{\bar{\alpha}}$ is still a strongly continuous semigroup on $L^{\eta,\gamma}$ we keep the existence of $M\geq 1$ and $\omega>0$ such that
		\begin{align*}
			\lVert S_{\bar{\alpha}}(\tau)u_0\rVert_{ L^{\eta,\gamma}}\leq Me^{\omega\tau}\norm{u_0}_{ L^{\eta,\gamma}} \quad \text{for } \tau\geq 0  
		\end{align*}
		and analogously for the others.
	\end{remark}
	\fi
	Now we establish smoothing properties of $S_{\bar{\alpha}}$.  To shorten the notation, we denote by $\bar{U}$ the  expander profile that corresponds to $\bar{\alpha}$.
	\begin{proposition}\label{prop:regularization L}
		In addition to the assumptions of Proposition \ref{prop:semigroup}, let $ \eta\leq \eta'$ and $ \gamma\leq \gamma'$ such that $\frac{1}{\eta}-\frac{1}{\eta'}=\frac{1}{\gamma}-\frac{1}{\gamma'}$. Then, given $\delta>0$ we have that
		\begin{align}\label{eq:regularization_L1}
			\lVert S_{\bar{\alpha}}(\tau)u_0 \rVert_{L^{\eta' }_{rad}}&\lesssim\frac{e^{(\lambda_{\bar{\alpha}}+\delta) \tau}}{\tau^{\frac{d}{2}\left(\frac{1}{\eta}-\frac{1}{\eta'}\right)}}\lVert u_0\rVert_{L^\eta_{rad}}
		\end{align}
		for all $u_0 \in L^{\eta}_{rad}$ and $\tau > 0$. Moreover,
		\begin{align}
			\label{eq:regularization_L2} \lVert S_{\bar{\alpha}}(\tau)u_0 \rVert_{L^{\eta',\gamma'}}&\lesssim\frac{e^{(\lambda_{\bar{\alpha}}+\delta) \tau}}{\tau^{\frac{d}{2}\left(\frac{1}{\eta}-\frac{1}{\eta'}\right)}}\lVert u_0\rVert_{L^{\eta,\gamma}} 
		\end{align}
		for all $u_0 \in L^{\eta,\gamma}$ and $\tau > 0$.
	\end{proposition}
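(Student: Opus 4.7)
The plan is to combine the Duhamel formula
\[
S_{\bar{\alpha}}(\tau) u_0 = S_0(\tau) u_0 + \int_0^\tau S_0(\tau - s)\, V_{\bar{\alpha}}\, S_{\bar{\alpha}}(s) u_0 \, ds
\]
with the uniform boundedness of $V_{\bar{\alpha}} = p|\bar{U}|^{p-1}$ guaranteed by Proposition \ref{Prop:Har-Wei}, so as to transfer the $S_0$-smoothing from Proposition \ref{Prop_Semigroup_S0} onto $S_{\bar{\alpha}}$. Mirroring the structure of the proof of Proposition \ref{Prop_Semigroup_S0}, I would first establish the bounds on a short interval $\tau \in (0,2]$ and then extend to all $\tau > 0$ via the semigroup law together with the growth estimate of Proposition \ref{prop:semigroup}.

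On the short-time interval, the first term in Duhamel is controlled directly by \eqref{eq:regularization_semigroup_lplq1} (resp.\ \eqref{eq:regularization_semigroup_lplq2}). Setting $\beta := \tfrac{d}{2}(\tfrac{1}{\eta} - \tfrac{1}{\eta'})$, when $\beta < 1$ I bound the Duhamel integral by composing the $L^\eta \to L^{\eta'}$ smoothing of $S_0$ with the uniform estimate $\|S_{\bar{\alpha}}(s) u_0\|_{L^\eta_{rad}} \lesssim \|u_0\|_{L^\eta_{rad}}$ on $s \in (0,2]$ from Proposition \ref{prop:semigroup}; the resulting time integral $\int_0^\tau (\tau-s)^{-\beta}\,\dd s \sim \tau^{1-\beta}$ is strictly subleading to the $\tau^{-\beta}$ coming from the first term. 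When $\beta \geq 1$, I would iterate using the semigroup law $S_{\bar{\alpha}}(\tau) = S_{\bar{\alpha}}(\tau/N)^N$ and a chain $\eta = \eta_0 < \eta_1 < \cdots < \eta_N = \eta'$ (with matching $\gamma_k$'s preserving $\tfrac{1}{\eta_{k-1}} - \tfrac{1}{\eta_k} = \tfrac{1}{\gamma_{k-1}} - \tfrac{1}{\gamma_k}$) chosen so that each individual loss $\tfrac{d}{2}(\tfrac{1}{\eta_{k-1}} - \tfrac{1}{\eta_k}) < 1$; the local-in-time boundedness of $S_{\bar{\alpha}}$ on each intermediate $L^{\eta_k, \gamma_k}$-space, required for the iteration, is furnished by the bounded perturbation theorem as in Step~1 of the proof of Proposition \ref{Prop:difference_compact}.

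For $\tau > 2$, I would use the semigroup decomposition $S_{\bar{\alpha}}(\tau) = S_{\bar{\alpha}}(1)\, S_{\bar{\alpha}}(\tau - 1)$: apply the growth bound \eqref{eq:decay_semigroup} of Proposition \ref{prop:semigroup} to $S_{\bar{\alpha}}(\tau - 1)$ in $L^{\eta,\gamma}$ to get the factor $e^{(\lambda_{\bar{\alpha}}+\delta)(\tau-1)} \|u_0\|_{L^{\eta,\gamma}}$, and then apply the short-time smoothing (already established) to $S_{\bar{\alpha}}(1)$ to pass into $L^{\eta',\gamma'}$. Since $\tau^{-\beta}$ is bounded for $\tau \geq 2$, multiplying the two factors yields precisely \eqref{eq:regularization_L2}; the argument for \eqref{eq:regularization_L1} is identical after specializing to $\gamma=\eta$ and $\gamma'=\eta'$.

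The main technical obstacle lies in the $\beta \geq 1$ iteration on $(0,2]$: one must confirm that the polynomial $\tau^{-\beta}$ singularity survives the $N$-fold composition of one-step smoothings without being degraded, which amounts to a careful (but standard) multivariable Beta-function computation on the simplex. All the exponential growth factors produced along the way are harmless on bounded time intervals and can be absorbed into the implicit constants.
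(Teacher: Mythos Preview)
Your proposal is correct and follows essentially the same approach as the paper: Duhamel expansion of $S_{\bar{\alpha}}$ around $S_0$, short-time smoothing on $(0,2]$ using \eqref{eq:regularization_semigroup_lplq1}--\eqref{eq:regularization_semigroup_lplq2} together with \eqref{eq:decay_semigroup}--\eqref{eq:decay_semigroup1}, iteration through intermediate exponents when $\beta \geq 1$, and the factorization $S_{\bar{\alpha}}(\tau)=S_{\bar{\alpha}}(1)S_{\bar{\alpha}}(\tau-1)$ for $\tau>2$. The paper's proof is slightly terser about the iteration step (it simply asserts one can ``successively perform the above steps for a finite number of intermediary values''), but the content is the same.
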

	
	\begin{proof}
		Note that by Duhamel formula we have that
		\begin{align*}
			S_{\bar{\alpha}}(\tau)u_0&=S_0(\tau)u_0+p\int_0^\tau S_0(\tau-s)[\lvert \bar{U}\rvert^{p-1}S_{\bar{\alpha}}(s)u_0]ds,
		\end{align*}
		for $u_0 \in L^{\gamma}_{rad}$ and $\tau \geq 0$.
		We now separately treat small and large values of $\tau$.
		First, we assume that $\tau \in (0,2]$. If $\frac{d}{2}\left(\frac{1}{\eta}-\frac{1}{\eta'}\right)<1$ then we have, thanks to \eqref{eq:regularization_semigroup_lplq1} (resp.~\eqref{eq:regularization_semigroup_lplq2}) , \eqref{eq:decay_semigroup1} (resp.~\eqref{eq:decay_semigroup}) and the decay of $\bar{U}$, that
		\begin{align*}
			\lVert S_{\bar{\alpha}}(\tau)u_0\rVert_{L^{\eta'}_{rad}}&\lesssim\frac{1}{\tau^{\frac{d}{2}\left(\frac{1}{\eta}-\frac{1}{\eta'}\right)}}\lVert u_0\rVert_{L^{\eta}_{rad}}+\int_0^\tau \frac{\lVert S_{\bar{\alpha}}(s)u_0\rVert_{L^{\eta}_{rad}}}{(\tau-s)^{\frac{d}{2}\left(\frac{1}{\eta}-\frac{1}{\eta'}\right)}} ds\\ & \lesssim \frac{1}{\tau^{\frac{d}{2}\left(\frac{1}{\eta}-\frac{1}{\eta'}\right)}}\lVert u_0\rVert_{L^{\eta}_{rad}}+\int_0^\tau \frac{e^{(\lambda_{\bar{\alpha}}+\delta)s}}{(\tau-s)^{\frac{d}{2}\left(\frac{1}{\eta}-\frac{1}{\eta'}\right)}}\lVert u_0\rVert_{L^{\eta}_{rad}} ds\\ & \lesssim  \frac{1}{\tau^{\frac{d}{2}\left(\frac{1}{\eta}-\frac{1}{\eta'}\right)}}\lVert u_0\rVert_{L^{\eta}_{rad}}+\lVert u_0\rVert_{L^{\eta}_{rad}}, \\
			( \text{resp. } \lVert S_{\bar{\alpha}}(\tau)u_0\rVert_{L^{\eta',\gamma'}}&\lesssim \frac{1}{\tau^{\frac{d}{2}\left(\frac{1}{\eta}-\frac{1}{\eta'}\right)}}\lVert u_0\rVert_{L^{\eta,\gamma}}+\lVert u_0\rVert_{L^{\eta,\gamma}}\ )
		\end{align*}
		for all $u_0 \in L^{\eta}_{rad}$ and $\tau\in (0,2]$.
		This implies that
		\begin{align}\label{estimate_regularization_small_time1}
			\lVert S_{\bar{\alpha}}(\tau)u_0\rVert_{L^{\eta'}_{rad}}&\lesssim\frac{e^{(\lambda_{\bar{\alpha}}+\delta) \tau}}{\tau^{\frac{d}{2}\left(\frac{1}{\eta}-\frac{1}{\eta'}\right)}}\lVert u_0\rVert_{L^{\eta}_{rad}} \\
			\label{estimate_regularization_small_time2}(\text{resp. }  \lVert S_{\bar{\alpha}}(\tau)u_0\rVert_{L^{\eta',\gamma'}}&\lesssim\frac{e^{(\lambda_{\bar{\alpha}}+\delta) \tau}}{\tau^{\frac{d}{2}\left(\frac{1}{\eta}-\frac{1}{\eta'}\right)}}\lVert u_0\rVert_{L^{\eta,\gamma}} )
		\end{align}
		for all $u_0 \in L^{\eta}_{rad}$ and $\tau\in (0,2]$.
		In case $\frac{d}{2}( \tfrac{1}{\eta}-\tfrac{1}{\eta'} ) \geq 1$, one can successively perform the above steps for a finite number of intermediary values $\eta < \eta_i < \gamma$, thereby obtaining \eqref{estimate_regularization_small_time1}, \eqref{estimate_regularization_small_time2} for $\tau \in (0,2]$ for any choice of $1 \leq \eta \leq \gamma$.

		In order to treat the large values of $\tau$, we use \eqref{estimate_regularization_small_time1} (resp.~\eqref{estimate_regularization_small_time2}) and \eqref{eq:decay_semigroup1} (resp.~\eqref{eq:decay_semigroup}). Indeed, we have that
		\begin{align}\label{estimate_regularization_large_time1}
			\lVert S_{\bar{\alpha}}(\tau)u_0\rVert_{L^{\eta'}_{rad}}&\leq \lVert S_{\bar{\alpha}}(1)\rVert_{L^{\eta}_{rad}\rightarrow L^{\eta'}_{rad}}\lVert S_{\bar{\alpha}}(\tau-1)u_0\rVert_{L^{\eta}_{rad}}\notag\\ & \lesssim e^{(\lambda_{\bar{\alpha}}+\frac{\delta}{2})\tau} \lVert u_0\rVert_{L^{\eta}_{rad}}\\
			\label{estimate_regularization_large_time2} (\text{resp. } \lVert S_{\bar{\alpha}}(\tau)u_0\rVert_{L^{\eta',\gamma'}}&\lesssim e^{(\lambda_{\bar{\alpha}}+\frac{\delta}{2})\tau} \lVert u_0\rVert_{L^{\eta,\gamma}}
		\end{align}
		for all $u_0 \in L^{\eta}_{rad}$ and $\tau > 2$.	Combining \eqref{estimate_regularization_small_time1} (resp.~\eqref{estimate_regularization_small_time2}) and \eqref{estimate_regularization_large_time1} (resp.~\eqref{estimate_regularization_large_time2}) relation \eqref{eq:regularization_L1} (resp.~\eqref{eq:regularization_L2}) follows.
	\end{proof}
	
	\section{Existence of ancient solutions}\label{sec: ancient solutions}
	\noindent Throughout this section we assume that 
	\begin{align}\label{hp: coefficients 1}
		d \geq 3, \quad 1+\frac{2}{d} < p < p_{JL}, \quad 1\leq \hat{q}\leq q_c<\hat{r},\quad \hat{r}\geq \hat{q}p.
	\end{align}
	We are looking for two solutions $U_1, U_2$ of the following PDE
	\begin{align}\label{eq:self_sim}
		\partial_\tau U=L_0 U+|U|^{p-1}U,\quad x\in \R^d, \quad \tau\in (-\infty,T],
	\end{align}
	for some $T \in \R$.
	One of the two solutions, denote it by $U_1$, is independent of time and is given by $\bar{U}$ from the previous section. The other solution, denote it by $U_2$, is of the form 
	\begin{align}\label{eq:form nonuniqueness}
		U_2=\bar{U}+{U}^{lin}+{U}^{per},
	\end{align}
	where ${U}^{lin}$ is the growing mode associated to the maximal unstable eigenvalue $\la_{\bar{\alpha}}$.
	%In particular, we will have that 
	%$U^{lin}, U^{per}$ belong to $C((-\infty,\bar{T}], L^{\hat{q},\hat{r}})$ for some $\bar{T}<0$ and satisfy suitable decay properties in time.
	Since we want both $U_1=\bar{U}$ and $U_2=U_1+U^{lin}+U^{per}$ to solve \eqref{eq:self_sim}, we observe that the difference $\psi:=U^{lin}+U^{per}$ has to satisfy the following equation
	\begin{align}\label{eq:perturb}
		\partial_\tau \psi=L_{\bar{\alpha}}\psi+\lvert \bar{U}+\psi\rvert^{p-1}(\bar{U}+\psi)-\lvert \bar{U}\rvert^{p-1}\bar{U}-p\lvert\bar{U}\rvert^{p-1}{\psi}.
	\end{align}
	A function $\psi\in  C((-\infty,\bar{T}], L^{\hat{q},\hat{r}}) $ that solves \eqref{eq:perturb} and for which
	\begin{align*}
		\norm{\psi(\tau)}_{L^{\hat{q},\hat{r}}}\rightarrow 0 \quad \text{ as} \quad \tau\rightarrow -\infty,
	\end{align*}
	will be called an \emph{ancient solution} of equation \eqref{eq:perturb}. For the growing mode $U^{lin}$, we explicitly write
	\begin{align}\label{def:Ulin}
		U^{lin}=e^{\lambda_{\bar{\alpha}}\tau }\bar{U}^{lin},
	\end{align}
	where $\bar{U}^{lin}$ is an eigenfunction of $L_{\bar{\alpha}}$ associated with $\lambda_{\bar{\alpha}}$. In particular
	\begin{align}\label{eq:decay_Ulin}
		\lVert U^{lin}(\tau)\rVert_{L^{\hat{q},\hat{r}}}=e^{\lambda_{\bar{\alpha}} \tau} \lVert\bar{U}^{lin} \rVert_{L^{\hat{q},\hat{r}}},
	\end{align}
	and \begin{align}\label{equation_lin}
		\partial_{\tau}U^{lin}=L_{\bar{\alpha}} U^{lin}.
	\end{align}
	According to \eqref{equation_lin}, from \eqref{eq:perturb} we arrive at an equation for $U^{per}$
	\begin{align}\label{eq:ancient}
		\partial_\tau U^{per}=\,&L_{\bar{\alpha}}U^{per}+\lvert \bar{U}+U^{lin}+U^{per}\rvert^{p-1}(\bar{U}+U^{lin}+U^{per})\notag\\ &-\lvert \bar{U}\rvert^{p-1}\bar{U}-p\lvert\bar{U}\rvert^{p-1}(U^{lin}+U^{per}).
	\end{align}
	The next proposition is the core technical result of this section.
	\begin{proposition}\label{proposition ancient solution}
		Assume \eqref{hp: coefficients 1}. Let $\bar{\alpha}>0$ be such that for the corresponding expander $\bar{U}$ the operator $L_{\bar{\alpha}}: \mathcal{D}(L_{\bar{\alpha}}) \subseteq L^{\hat{q},\hat{r}} \rightarrow  L^{\hat{q},\hat{r}}$ admits a maximal positive eigenvalue $\la_{\bar{\alpha}}$. For every $\delta < \min \{ p-1,1\}\,\la_{\bar{\alpha}}$ there exists $\bar{\eps}=\bar{\eps}(\delta)>0$ such that for all $\eps<\bar{\eps}$ there exists $\bar{T}=\bar{T}(\eps,\delta)<0$ such that for all $T<\bar{T}$  there exists $U^{per}\in C((-\infty,T],L^{\hat{q},\hat{r}})$ that solves \eqref{eq:ancient} and
		\begin{align}\label{decaying_U_per}
			\lVert U^{per}(\tau)\rVert_{L^{\hat{q},\hat{r}}}\leq \eps e^{(\lambda_{\bar{\alpha}} + \delta)\tau}
		\end{align}
		for $\tau \in (-\infty, T]$.
	\end{proposition}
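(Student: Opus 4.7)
The plan is to recast \eqref{eq:ancient} as an integral equation on $(-\infty,T]$ via Duhamel's principle and solve it by a contraction argument in a weighted backward-in-time space. Writing
\[ F(v) := |\bar{U}+v|^{p-1}(\bar{U}+v) - |\bar{U}|^{p-1}\bar{U} - p|\bar{U}|^{p-1} v, \]
the ancient condition $\|U^{per}(\tau)\|_{L^{\hat{q},\hat{r}}} \to 0$ as $\tau \to -\infty$ together with \eqref{eq:ancient} formally gives
\[ U^{per}(\tau) = \int_{-\infty}^\tau S_{\bar{\alpha}}(\tau-s)\, F\bigl(U^{lin}(s)+U^{per}(s)\bigr)\, ds =: \Phi(U^{per})(\tau). \]
Setting $\beta := \min\{p,2\}$, so that $\beta-1 = \min\{p-1,1\}$ and hence $\delta < (\beta-1)\lambda_{\bar{\alpha}}$, and given $\varepsilon > 0$ and $T < 0$ to be chosen, I would work in the complete metric space
\[ X_{T,\varepsilon} := \bigl\{ U \in C((-\infty,T], L^{\hat{q},\hat{r}}) : \|U(\tau)\|_{L^{\hat{q},\hat{r}}} \leq \varepsilon e^{(\lambda_{\bar{\alpha}}+\delta)\tau} \text{ for all } \tau \leq T \bigr\}, \]
endowed with the natural weighted sup-metric, and show that $\Phi$ is a contraction on $X_{T,\varepsilon}$.

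The key nonlinear input is the pointwise inequality $|F(v)| \lesssim |v|^\beta + |v|^p$, valid uniformly in $\bar{U}$ because $\bar{U}\in L^\infty$ by Proposition \ref{Prop:Har-Wei}; for $p \geq 2$ it comes from Taylor expansion of $t\mapsto|t|^{p-1}t$ about $\bar{U}$, for $1 < p < 2$ from H\"older continuity of this map and its derivative. Picking target smoothing exponents $\gamma := \hat{r}/p$ and $\eta$ defined by
\[ \tfrac{1}{\eta} - \tfrac{1}{\hat{q}} = \tfrac{1}{\gamma} - \tfrac{1}{\hat{r}} = \tfrac{p-1}{\hat{r}}, \]
the hypothesis $\hat{r} \geq p\hat{q}$ ensures that $\beta\eta$, $p\eta$ and $\beta\hat{r}/p$ all lie in $[\hat{q},\hat{r}]$, so interpolation in $L^{\hat{q},\hat{r}}$ yields $\|F(v)\|_{L^{\eta,\gamma}} \lesssim \|v\|_{L^{\hat{q},\hat{r}}}^\beta + \|v\|_{L^{\hat{q},\hat{r}}}^p$. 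Since the common-shift condition of Proposition \ref{prop:regularization L} is met, \eqref{eq:regularization_L2} gives
\[ \|S_{\bar{\alpha}}(\tau-s)F(v)\|_{L^{\hat{q},\hat{r}}} \lesssim \frac{e^{(\lambda_{\bar{\alpha}}+\delta)(\tau-s)}}{(\tau-s)^{d(p-1)/(2\hat{r})}} \bigl(\|v\|_{L^{\hat{q},\hat{r}}}^\beta + \|v\|_{L^{\hat{q},\hat{r}}}^p\bigr), \]
with $d(p-1)/(2\hat{r}) < 1$ thanks to $\hat{r} > q_c = d(p-1)/2$.

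Plugging in $v(s) = U^{lin}(s)+U(s)$ with $U \in X_{T,\varepsilon}$ and using \eqref{eq:decay_Ulin} give $\|v(s)\|_{L^{\hat{q},\hat{r}}} \lesssim e^{\lambda_{\bar{\alpha}} s}$ for $s \leq T$, whence
\[ \|\Phi(U)(\tau)\|_{L^{\hat{q},\hat{r}}} \lesssim \int_{-\infty}^\tau \frac{e^{(\lambda_{\bar{\alpha}}+\delta)(\tau-s)}}{(\tau-s)^{d(p-1)/(2\hat{r})}}\, e^{\beta\lambda_{\bar{\alpha}} s}\, ds. \]
The integral converges at $s = -\infty$ precisely because $\delta < (\beta-1)\lambda_{\bar{\alpha}}$, and an explicit computation bounds it by $C\,e^{\beta\lambda_{\bar{\alpha}} \tau}$, giving
\[ e^{-(\lambda_{\bar{\alpha}}+\delta)\tau}\|\Phi(U)(\tau)\|_{L^{\hat{q},\hat{r}}} \lesssim e^{((\beta-1)\lambda_{\bar{\alpha}}-\delta)\tau}, \]
which tends to $0$ as $\tau \to -\infty$; choosing $\bar{T} = \bar{T}(\varepsilon,\delta)$ sufficiently negative then forces the right-hand side to be at most $\varepsilon$ for all $\tau \leq T \leq \bar{T}$, so $\Phi$ maps $X_{T,\varepsilon}$ into itself. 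The analogous computation using the mean-value-type bound $|F(v_1)-F(v_2)| \lesssim (|v_1|+|v_2|)^{\beta-1}|v_1-v_2| + (|v_1|+|v_2|)^{p-1}|v_1-v_2|$ produces a contraction constant of order $e^{((\beta-1)\lambda_{\bar{\alpha}}-\delta)T}$, strictly less than $1$ once $T$ is sufficiently negative.

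The Banach fixed-point theorem then delivers the required $U^{per} \in X_{T,\varepsilon}$; strong continuity $\tau \mapsto U^{per}(\tau)$ into $L^{\hat{q},\hat{r}}$ is a standard consequence of the strong continuity of $S_{\bar{\alpha}}$ combined with dominated convergence applied to the Duhamel integral. The main technical subtlety, as highlighted, is routing the nonlinearity through the Lebesgue pair $(\eta,\hat{r}/p)$ compatible with the common-shift constraint in \eqref{eq:regularization_L2} while simultaneously absorbing both the $|v|^\beta$ and $|v|^p$ contributions of $F$---this is precisely where the hypothesis $\hat{r} \geq p\hat{q}$ is indispensable, and the reason why the threshold $\delta < \min\{p-1,1\}\lambda_{\bar{\alpha}}$ arises naturally as the interface between the linear-in-time decay of $U^{lin}$ and the superlinear response of $F$.
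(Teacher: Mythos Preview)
Your overall strategy---Duhamel formulation plus contraction in the weighted backward-time ball $X_{T,\varepsilon}$---is exactly the paper's approach, and your use of $\bar U\in L^\infty$ (Proposition~\ref{Prop:Har-Wei}) to absorb the factor $|\bar U|^{p-2}$ into the constant when $p>2$ is a legitimate simplification of the paper's H\"older-based treatment via \eqref{interp_ineq_2}. There is, however, one genuine gap in the way you route the nonlinearity through the smoothing estimate.

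You invoke the intersection-space bound \eqref{eq:regularization_L2} with source pair $(\eta,\gamma)=(\eta,\hat r/p)$ and target $(\hat q,\hat r)$, where $\tfrac1\eta=\tfrac1{\hat q}+\tfrac{p-1}{\hat r}$. But Proposition~\ref{prop:regularization L} inherits from Proposition~\ref{prop:semigroup} the standing hypothesis $1\le\eta$, while your $\eta$ satisfies $\eta<\hat q$. Since the hypotheses~\eqref{hp: coefficients 1} allow $\hat q=1$ (and in fact this is the choice used later in Section~\ref{sec:localization}), one then has $\tfrac1\eta=1+\tfrac{p-1}{\hat r}>1$, i.e.\ $\eta<1$, and the smoothing estimate as stated simply does not apply. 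In other words, to gain enough integrability on the $L^{\hat q}$ side \emph{via smoothing} you would have to start from an $L^\eta$ space with $\eta<1$, which lies outside the framework of Section~\ref{Sec:Expanders}.

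The remedy, which is what the paper does, is to decouple the two components of the $L^{\hat q,\hat r}$ norm. For $\|\Phi(U)(\tau)\|_{L^{\hat q}}$ no smoothing is needed: since $p\hat q\le\hat r$ (and $2\hat q\le\hat r$ when $p>2$), interpolation places $F(v)$ directly in $L^{\hat q}$ with $\|F(v)\|_{L^{\hat q}}\lesssim\|v\|_{L^{\hat q,\hat r}}^\beta+\|v\|_{L^{\hat q,\hat r}}^p$, and one uses only the plain growth bound \eqref{eq:decay_semigroup1}. For $\|\Phi(U)(\tau)\|_{L^{\hat r}}$ one applies \eqref{eq:regularization_L2} with source pair $(\hat q,\hat r/p)$---so the lower source exponent stays at $\hat q\ge1$---and extracts only the $L^{\hat r}$ component of the target. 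With this adjustment your computations go through and yield precisely the estimates you claim; the remaining steps (convergence of the time integral under $\delta<(\beta-1)\lambda_{\bar\alpha}$, contraction, Banach fixed point) are correct as written.
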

	The proof of Proposition \ref{proposition ancient solution} relies on a fixed point argument. In order to treat the nonlinear part of \eqref{eq:ancient} we need the following elementary result.
	\begin{lemma}\label{lemma_nonlinearity}
		Let $x,y,z\in \R$ and  $p>1$. Then
		\begin{multline}\label{estimate continuity}
			\left\lvert\lvert x+y\rvert^{p-1}(x+y)-\lvert x\rvert^{p-1}x-p\lvert x\rvert^{p-1}y\right\rvert\\ \leq \begin{cases}
				p \rvert y\rvert^p \quad &\text{if } p\leq 2,\\
				\frac{p(p-1)}{2} (1\vee 2^{p-3})\left(\lvert x\rvert^{p-2}\lvert y\rvert^2+\lvert y\rvert^p\right)\quad &\text{if } p>2.
			\end{cases}   
		\end{multline}
		\begin{multline}\label{estimate contraction}
			\left\lvert\lvert x+y\rvert^{p-1}(x+y)-\lvert x+z\rvert^{p-1}(x+z)-p\lvert x\rvert^{p-1}(y-z)\right\rvert\\  \leq \begin{cases}
				p(\lvert y\rvert^{p-1}+\lvert z\rvert^{p-1})\lvert y-z\rvert\quad &\text{if } p\leq 2,\\ 
				p(p-1)(1\vee 3^{p-3})(\lvert y\rvert +\lvert z\rvert)(\lvert x\rvert^{p-2}+\lvert y\rvert^{p-2} +\lvert z\rvert^{p-2})\lvert y-z\rvert & \text{if } p>2.
			\end{cases}
		\end{multline}
	\end{lemma}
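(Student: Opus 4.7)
The plan is to treat both inequalities as Taylor-type remainder estimates for $n(f):=|f|^{p-1}f$, splitting into the two regimes $1<p\leq 2$ and $p>2$ according to the regularity of $n''$ at the origin.

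For \eqref{estimate continuity}, I would start from
\begin{align*}
|x+y|^{p-1}(x+y)-|x|^{p-1}x-p|x|^{p-1}y = p\int_0^1 \bigl(|x+ty|^{p-1}-|x|^{p-1}\bigr) y\,dt,
\end{align*}
obtained by applying the fundamental theorem of calculus to $n'(f)=p|f|^{p-1}$. In the range $1<p\leq 2$, the function $a\mapsto |a|^{p-1}$ is subadditive, so $\bigl||x+ty|^{p-1}-|x|^{p-1}\bigr|\leq t^{p-1}|y|^{p-1}$; integration then produces the bound $|y|^p \leq p|y|^p$. For $p>2$, the function $n$ is of class $C^2$ with $|n''(f)|=p(p-1)|f|^{p-2}$, and I would instead use the second-order Taylor remainder
\begin{align*}
n(x+y)-n(x)-n'(x)y = y^2\int_0^1(1-s)\,n''(x+sy)\,ds,
\end{align*}
combined with the elementary bound $(|x|+|y|)^{p-2}\leq\max\{1,2^{p-3}\}(|x|^{p-2}+|y|^{p-2})$, which follows from subadditivity of $t\mapsto t^{p-2}$ on $[0,\infty)$ when $2<p\leq 3$ and from convexity when $p>3$. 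The factor $\int_0^1(1-s)\,ds=\tfrac{1}{2}$ accounts for the stated constant.

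For \eqref{estimate contraction}, I would write analogously
\begin{align*}
n(x+y)-n(x+z)-p|x|^{p-1}(y-z) = \int_0^1 \bigl(n'(x+z+t(y-z))-n'(x)\bigr)(y-z)\,dt,
\end{align*}
and apply the same two-regime reasoning. For $p\leq 2$, Hölder continuity of $a\mapsto |a|^{p-1}$ together with $(|y|+|z|)^{p-1}\leq |y|^{p-1}+|z|^{p-1}$ directly produces the first bound. For $p>2$, I would apply the mean value theorem to $a\mapsto |a|^{p-1}$ on the segment joining $x$ to $x+z+t(y-z)$ and bound the relevant power via $(|x|+|y|+|z|)^{p-2}\leq\max\{1,3^{p-3}\}(|x|^{p-2}+|y|^{p-2}+|z|^{p-2})$, once again by subadditivity when $p-2\leq 1$ and convexity when $p-2>1$; the estimate $|z+t(y-z)|\leq |y|+|z|$ supplies the remaining linear factor in the stated bound.

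No substantial obstacle is expected: the lemma is genuinely elementary, and the only real care lies in bookkeeping the correct constants, driven throughout by the dichotomy between subadditivity and convexity of the power functions $t\mapsto t^{p-1}$ and $t\mapsto t^{p-2}$ on $[0,\infty)$.
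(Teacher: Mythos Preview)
Your proposal is correct and follows essentially the same route as the paper: both arguments treat the expressions as Taylor remainders for $n(f)=|f|^{p-1}f$, split according to whether $n''$ is bounded near the origin, and control the power terms via subadditivity ($p-2\le 1$) or convexity ($p-2>1$) of $t\mapsto t^{p-2}$. The only cosmetic difference is that you use the integral form of the remainder while the paper uses the Lagrange (mean-value) form; the resulting constants coincide.
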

	\begin{proof}
		Fix $x,y,z \in \R$. Define $g:\R \rightarrow \R$ by 
		\begin{align*}
			g(t):=\lvert x+ty\rvert^{p-1}(x+ty).
		\end{align*}
		Note that $g$ is differentiable and the derivative $g'(t)=py\lvert x+ty\rvert^{p-1}$ is continuous. We therefore have that
		\begin{align*}
			g(1)-g(0)-g'(0)=g'(\xi)-g'(0) \quad \text{for some} \quad \xi\in (0,1).
		\end{align*}
		Consequently, if $p\leq 2$ then
		\begin{align*}
			\left\lvert\lvert x+y\rvert^{p-1}(x+y)-\lvert x\rvert^{p-1}x-p\lvert x\rvert^{p-1}y\right\rvert\leq p \rvert y\rvert^p.     
		\end{align*}
		If $p>2$ then
		\begin{align*}
			g''(t)=p(p-1)\lvert x+ty\rvert^{p-2}\sgn(x+ty)y^2,
		\end{align*}
		which is continuous, so
		\begin{align*}
			g(1)-g(0)-g'(0)=\frac{1}{2}g''(\xi) \quad \text{for some} \quad \xi\in (0,1).    
		\end{align*}
		Consequently,
		\begin{align*}
			\left\lvert\lvert x+y\rvert^{p-1}(x+y)-\lvert x\rvert^{p-1}x-p\lvert x\rvert^{p-1}y\right\rvert\leq \frac{p(p-1)}{2} (1\vee 2^{p-3})\left(\lvert x\rvert^{p-2}\lvert y\rvert^2+\lvert y\rvert^p\right),
		\end{align*} 
		which concludes the proof of \eqref{estimate continuity}.
		To establish \eqref{estimate contraction} we start by denoting 
		\begin{align*}
			h(t):=\lvert x+ty+(1-t)z\rvert^{p-1}(x+ty+(1-t)z).
		\end{align*}
		Observe that
		\begin{align*}
			h'(t)=p\lvert x+ty+(1-t)z \rvert^{p-1}(y-z).
		\end{align*}
		Therefore, by Lagrange's theorem, we have that
		\begin{align*}
			h(1)-h(0)-p\lvert x\rvert^{p-1}(y-z)=h'(\xi)-p\lvert x\rvert^{p-1}(y-z) \quad \text{for some} \quad \xi\in (0,1).
		\end{align*}
		Consequently, if $p\leq 2$ then  
		\begin{align*}
			\left\lvert\lvert x+y\rvert^{p-1}(x+y)-\lvert x+z\rvert^{p-1}(x+z)-p\lvert x\rvert^{p-1}(y-z)\right\rvert\leq p(\lvert y\rvert^{p-1}+\lvert z\rvert^{p-1})\lvert y-z\rvert.
		\end{align*}
		In case of $p>2$ it remains to estimate 
		\begin{align*}
			p\lvert y-z\rvert \left\lvert\lvert x+\xi y+(1-\xi)z\rvert^{p-1}-\lvert x\rvert^{p-1}\right\rvert
		\end{align*}
		for $\xi\in (0,1)$. Note that the function
		\begin{align*}
			j(s):=\lvert x+s(\xi y+(1-\xi)z)\rvert^{p-1}
		\end{align*}
		is continuously differentiable, and consequently
		\begin{align*}
			j(1)-j(0)= j'(\theta) \quad \text{for some} \quad \theta\in (0,1).
		\end{align*}
		Finally,
		\begin{align*}
			\lvert j(1)-j(0)\rvert \leq (p-1)(1\vee 3^{p-3})(\lvert y\rvert +\lvert z\rvert)(\lvert x\rvert^{p-2}+\lvert y\rvert^{p-2} +\lvert z\rvert^{p-2}),
		\end{align*}
		which concludes the proof of \eqref{estimate contraction}.
	\end{proof}
	We are now ready to prove Proposition \ref{proposition ancient solution}.
	\begin{proof}[Proof of Proposition \ref{proposition ancient solution}]
		Fix a choice \eqref{hp: coefficients 1}. Furthermore, fix $\bar{\alpha}>0$ such that for the corresponding expander $\bar{U}$ the operator $L_{\bar{\alpha}}: \mathcal{D}(L_{\bar{\alpha}}) \subseteq L^{\hat{q},\hat{r}} \rightarrow  L^{\hat{q},\hat{r}}$ admits a maximal positive eigenvalue $\la_{\bar{\alpha}}$. For $\delta>0$ and $T<0$ we define the Banach space
		\begin{align*}
			W_\delta^T:=\{u\in C((-\infty,T],L^{\hat{q},\hat{r}}) ~\lvert~ \| u \|_{W^T_{\delta}}:=\operatorname{sup}_{t\in (-\infty,T]}e^{-(\lambda_{\bar{\alpha}} +\delta)t}\lVert u(t)\rVert_{L^{\hat{q},\hat{r}}}<\infty\}.
		\end{align*}
		Furthermore, given $\eps>0$, we denote by $B_{\eps}(W^T_\delta)$ the closed ball in $W^T_\delta$ with center at $0$ and radius $\eps$. We are looking for $\delta$, $\eps$, and $T$ for which the map 
		$\Gamma: B_{\eps}(W^T_\delta)\rightarrow B_{\eps}(W^T_\delta)$ defined by
		\begin{align*}
			\Gamma(u)(\cdot)=\,&\int_{-\infty}^{\cdot} S_{\bar{\alpha}}(\cdot-\tau)\lvert \bar{U}+U^{lin}(\tau)+u(\tau)\rvert^{p-1}(\bar{U}+U^{lin}(\tau)+u(\tau))d\tau\\ &-\int_{-\infty}^{\cdot}S_{\bar{\alpha}}(\cdot-\tau)\left(\lvert \bar{U}\rvert^{p-1}\bar{U}+p\lvert\bar{U}\rvert^{p-1}(U^{lin}(\tau)+u(\tau))\right) d\tau
		\end{align*}
		is a contraction. We first show that we can arrange that $\Gamma$ indeed maps $B_{\eps}(W^T_\delta)$ to itself. Based on Lemma \ref{lemma_nonlinearity}, let us distinguish two cases, $p \leq 2$ and $p>2$.
		
		Assume $p \leq 2$ and $T<0$. From Proposition \ref{prop:semigroup}, for $\eta=\gamma=\hat{q}$ and Lemma \ref{lemma_nonlinearity} for $x=\bar{U},\ y=U^{lin}+u$, we get that for $t\in (-\infty,T]$
		\begin{align*}
			\lVert \Gamma(u)(t)\rVert_{L^{\hat{q}}_{rad}} &\lesssim\int_{-\infty}^t e^{(\lambda_{\bar{\alpha}}+\delta)(t-\tau)}\left(\lVert U^{lin}(\tau)\rVert_{L^{p\hat{q}}_{rad}}^p+\lVert u(\tau)\rVert_{L^{p\hat{q}}_{rad}}^p\right) d\tau.
		\end{align*}
		Now let us observe that, since $\hat{q}<p\hat{q}\leq \hat{r}$, for a suitable $\theta\in (0,1]$
		\begin{align}\label{interp_ineq_1}
			\lVert f\rVert_{L^{p\hat{q}}_{rad}} & \leq \lVert f\rVert^\theta_{L^{\hat{q}}_{rad}}\lVert f\rVert_{L^{\hat{r}}_{rad}}^{1-\theta} \leq \lVert f\rVert_{L^{\hat{q},\hat{r}}}. 
		\end{align}
		Therefore, thanks to \eqref{eq:decay_Ulin}, we have that
		\begin{align*}
			\lVert U^{lin}(\tau)\rVert_{L^{p\hat{q}}_{rad}}^p \lesssim e^{p\lambda_{\bar{\alpha}} \tau},
		\end{align*}
		and consequently by the definition of $B_{\eps}(W^T_\delta)$
		\begin{align*}
			\lVert u(\tau)\rVert_{L^{p\hat{q}}_{rad}}^p \leq \eps^{p}  e^{p(\lambda_{\bar{\alpha}}+\delta) \tau}.  
		\end{align*}
		In conclusion, denoting by $C=C(\delta)$ the (product of the) hidden constants in the previous steps, we get that
		\begin{align*}
			\lVert \Gamma(u)(t)\rVert_{L^{\hat{q}}_{rad}} \leq&\, Ce^{(\lambda_{\bar{\alpha}}+\delta)t}\int_{-\infty}^t  e^{\tau ((p-1)\lambda_{\bar{\alpha}}-\delta)} d\tau\\ & +C\eps^pe^{(\lambda_{\bar{\alpha}}+\delta)t}\int_{-\infty}^t  e^{\tau (p-1)(\lambda_{\bar{\alpha}}+\delta)} d \tau.
		\end{align*}
		Since $\delta<(p-1)\lambda_{\bar{\alpha}}$
		we get
		\begin{align}\label{estimate_1_pleq2}
			\lVert \Gamma(u)(t)\rVert_{L^{\hat{q}}_{rad}}&\leq   Ce^{((p-1)\lambda_{\bar{\alpha}}-\delta)T} e^{t(\lambda_{\bar{\alpha}}+\delta)}+\eps e^{t(\lambda_{\bar{\alpha}}+\delta)}\left(C\eps^{p-1}e^{t(p-1)(\lambda_{\bar{\alpha}}+\delta)} \right).   \end{align}
		Since $t<0$, if we first choose $\eps$ small enough such that 
		\begin{align*}
			C\eps^{p-1}<\frac{1}{4}    
		\end{align*}
		and consequently $T$ negative enough such that
		\begin{align*}
			Ce^{((p-1)\lambda_{\bar{\alpha}}-\delta)T}\leq \frac{\eps}{4},
		\end{align*}
		we obtain
		\begin{align}\label{first_estimate_map_pleq2}
			\lVert \Gamma(u)(t)\rVert_{L^{\hat{q}}_{rad}}\leq \frac{\eps}{2}e^{t(\lambda_{\bar{\alpha}}+\delta)}.    
		\end{align}
		In order to treat the higher norm $\lVert \Gamma(u)(t)\rVert_{L^{\hat{r}}_{rad}}$, we apply the regularization properties of our semigroup. More precisely, we invoke Proposition \ref{prop:regularization L} for $\eta=\hat{q},\gamma=\frac{\hat{r}}{p},\eta'=\frac{\hat{q}\hat{r}}{\hat{r}-(p-1)\hat{q}}$, and $\gamma'=\hat{r}$. Note that, due to our choices, $s:=d\left(\frac{1}{\gamma}-\frac{1}{\gamma'}\right)=\frac{d(p-1)}{\hat{r}}<2$. Therefore, thanks to Proposition \ref{prop:regularization L}, by arguing as above, we get that
		\begin{align*}
			\lVert \Gamma(u)(t)\rVert_{L^{\hat{r}}_{rad}} &\lesssim \int_{-\infty}^t \frac{e^{(\lambda_{\bar{\alpha}}+\delta)(t-\tau)}}{(t-\tau)^{s/2}}\left(\lVert U^{lin}(\tau)\rVert_{L^{\hat{q},\hat{r}}}^p+\lVert u(\tau)\rVert_{L^{\hat{q},\hat{r}}}^p\right) d\tau,
		\end{align*}
		which implies that
		\begin{equation*}
			\lVert \Gamma(u)(t)\rVert_{L^{\hat{r}}_{rad}} \leq
			Ce^{(\lambda_{\bar{\alpha}}+\delta)t}\int_{-\infty}^t  \frac{e^{\tau ((p-1)\lambda_{\bar{\alpha}}-\delta)}}{(t-\tau)^{s/2}} d\tau +C\eps^pe^{(\lambda_{\bar{\alpha}}+\delta)t}\int_{-\infty}^t  \frac{e^{\tau (p-1)(\lambda_{\bar{\alpha}}+\delta)}}{(t-\tau)^{s/2}} d \tau. 
		\end{equation*}
		The two integrals can be treated similarly to above. Indeed, for the second one, let $\beta$ be such that $\beta s<2$; this and the fact that $\delta<(p-1)\lambda_{\bar{\alpha}}$ then imply
		\begin{align}\label{holder_equation_singular}
			\int_{-\infty}^t  \frac{e^{\tau (p-1)(\lambda_{\bar{\alpha}}+\delta)}}{(t-\tau)^{s/2}} d \tau \leq& \int_{-\infty}^{t-1}  e^{\tau (p-1)(\lambda_{\bar{\alpha}}+\delta)} d \tau\notag\\ & +\left(\int_{-\infty}^t   e^{\beta'\tau (p-1)(\lambda_{\bar{\alpha}}+\delta)} d\tau \right)^{1/\beta'}\left(\int_{t-1}^{t}\frac{1}{(t-\tau)^{\frac{\beta s}{2}}}d\tau\right)^{1/\beta}\notag\\  \lesssim\,& e^{t(p-1)(\lambda_{\bar{\alpha}}+\delta)},
		\end{align}
		where $\beta'$ is the H\"older conjugate of $\beta$. The other integral is analogous. Therefore, up to renaming the constants, we get that
		\begin{align}\label{estimate_2_pleq2}
			\lVert \Gamma(u)(t)\rVert_{L^{\hat{r}}_{rad}} \leq &\, Ce^{((p-1)\lambda_{\bar{\alpha}}-\delta)T} e^{t(\lambda_{\bar{\alpha}}+\delta)}\notag\\ & +\eps e^{t(\lambda_{\bar{\alpha}}+\delta)}\left(C\eps^{p-1}e^{t(p-1)(\lambda_{\bar{\alpha}}+\delta)} \right),  \end{align}
		which is analogous to \eqref{estimate_1_pleq2}. As a consequence, upon possibly choosing a smaller $\eps$ and consequently a more negative $T$, we get 
		\begin{align*}
			\lVert \Gamma(u)(t)\rVert_{L^{\hat{q},\hat{r}}}\leq \eps e^{t(\lambda_{\bar{\alpha}}+\delta)}.    
		\end{align*}
		This implies that $\Gamma$ maps $B_{\eps}(W^T_\delta)$ into itself if $p \leq 2$.
		
		Let us now assume $p > 2$. Arguing as in the case $p \leq 2$ we get that
		\begin{align*}
			\lVert \Gamma(u)(t)\rVert_{L^{\hat{q}}_{rad}}\lesssim & \int_{-\infty}^t e^{(\lambda_{\bar{\alpha}}+\delta)(t-\tau)}\left(\lVert U^{lin}(\tau)\rVert_{L^{p\hat{q}}_{rad}}^p+\lVert u(\tau)\rVert_{L^{p\hat{q}}_{rad}}^p\right) d\tau\\ & +\int_{-\infty}^t e^{(\lambda_{\bar{\alpha}}+\delta)(t-\tau)}\left(\lVert \lvert \bar{U}\rvert^{p-2} U^{lin}(\tau)^2\rVert_{L^{\hat{q}}_{rad}}+\lVert \lvert \bar{U}\rvert^{p-2} u(\tau)^2\rVert_{L^{\hat{q}}_{rad}}\right) d\tau  \\ & =J_1(t)+J_2(t). 
		\end{align*}
		Similarly to above, due to  one can show that $J_1(t)\leq \frac{\eps}{4} e^{(\lambda_{\bar{\alpha}}+\delta)t}$ by choosing $\eps>0$ small enough and then $T$ negative enough. To treat $J_2(t)$ we do the following. Since $\bar{U}\in L^{\theta}(\R^d)$  for $\theta>q_c$, given $f,g\in L^{\hat{q},\hat{r}}$ we have by H\"older's inequality that
		\begin{multline}\label{interp_ineq_2}
			\lVert \lvert \bar{U}\rvert^{p-2} f g\rVert_{L^{\hat{q}}_{rad}}+\lVert \lvert \bar{U}\rvert^{p-2} f g\rVert_{L^{\frac{\hat{r}}{p}}_{rad}}\\ \leq \lVert \bar{U}\rVert_{L^{\hat{r}}_{rad}}^{p-2}\left( \lVert  f\rVert_{L^{\frac{2\hat{q}\hat{r}}{\hat{r}-(p-2)\hat{q}}}_{rad}}\lVert  g\rVert_{L^{\frac{2\hat{q}\hat{r}}{\hat{r}-(p-2)\hat{q}}}_{rad}}+\lVert  f\rVert_{L^{\hat{r}}_{rad}}\lVert  g\rVert_{L^{\hat{r}}_{rad}}\right)\\  \lesssim\lVert f\rVert_{L^{\hat{q},\hat{r}}}\lVert g\rVert_{L^{\hat{q},\hat{r}}}.  
		\end{multline}
		Here we used the fact that $\hat{q}\leq \frac{2\hat{q}\hat{r}}{\hat{r}-(p-2)\hat{q}}\leq \hat{r}$ since $\hat{r}\geq p\hat{q}.$
		Now, we can estimate $J_2(t)$ analogously to $J_1(t)$, obtaining
		\begin{align*}
			J_2(t)& \lesssim \int_{-\infty}^t e^{(\lambda_{\bar{\alpha}}+\delta)(t-\tau)} \left(\lVert U^{lin}(\tau)\rVert_{L^{\hat{q},\hat{r}}}^2+\lVert u(\tau)\rVert_{L^{\hat{q},\hat{r}}}^2\right)d\tau\\ & \lesssim
			\int_{-\infty}^t e^{(\lambda_{\bar{\alpha}}+\delta)(t-\tau)} \left( e^{2\lambda_{\bar{\alpha}}\tau}+\eps^2 e^{2(\lambda_{\bar{\alpha}}+\delta)\tau}\right)d\tau.
		\end{align*}
		Arguing as above, we show that if $\delta<\lambda_{\bar{\alpha}}$ we can find $\eps>0$ small enough and $T$ negative enough such that $J_2(t)\leq \frac{\eps}{4} e^{(\lambda_{\bar{\alpha}}+\delta)t}$.
		Therefore so far we proved
		\begin{align}\label{estimate_1_pgeq2}
			\lVert \Gamma(u)(t)\rVert_{L^{\hat{q}}_{rad}}\leq   \frac{\eps}{2} e^{(\lambda_{\bar{\alpha}}+\delta)t}.  
		\end{align}
		Concerning $\lVert  \Gamma(u)(t)\rVert_{L^{\hat{r}}_{rad}},$ again by Proposition \ref{prop:regularization L}, \eqref{lemma_nonlinearity}, and \eqref{interp_ineq_2} we get that
		\begin{align*}
			\lVert  \Gamma(u)(t)\rVert_{L^{\hat{r}}_{rad}} \lesssim \, &  \int_{-\infty}^t \frac{e^{(\lambda_{\bar{\alpha}}+\delta)(t-\tau)}}{(t-\tau)^{s/2}}\left(\lVert U^{lin}(\tau)\rVert_{L^{\hat{q},\hat{r}}}^p+\lVert u(\tau)\rVert_{L^{\hat{q},\hat{r}}}^p\right) d\tau\\ & +\int_{-\infty}^t \frac{e^{(\lambda_{\bar{\alpha}}+\delta)(t-\tau)}}{(t-\tau)^{s/2}}\left(\lVert \lvert \bar{U}\rvert^{p-2} U^{lin}(\tau)^2\rVert_{L^{\hat{q},\frac{\hat{r}}{p}}}+\lVert \lvert \bar{U}\rvert^{p-2} u(\tau)^2\rVert_{L^{\hat{q},\frac{\hat{r}}{p}}}\right) d\tau \\  \lesssim \, & e^{(\lambda_{\bar{\alpha}}+\delta)t}\int_{-\infty}^t  \frac{e^{\tau ((p-1)\lambda_{\bar{\alpha}}-\delta)}}{(t-\tau)^{s/2}} d\tau+\eps^pe^{(\lambda_{\bar{\alpha}}+\delta)t}\int_{-\infty}^t  \frac{e^{\tau (p-1)(\lambda_{\bar{\alpha}}+\delta)}}{(t-\tau)^{s/2}} d \tau \\ & 
			+e^{(\lambda_{\bar{\alpha}}+\delta)t} \int_{-\infty}^t \frac{e^{\tau (\lambda_{\bar{\alpha}}-\delta)}}{(t-\tau)^{s/2}}d\tau+\eps^2  e^{(\lambda_{\bar{\alpha}}+\delta)t}\int_{-\infty}^t \frac{e^{\tau (\lambda_{\bar{\alpha}}+\delta)}}{(t-\tau)^{s/2}}d\tau,
		\end{align*}
		where in the last inequality we used \eqref{eq:decay_Ulin}, the definition of $W^T_\delta$, and \eqref{interp_ineq_2}.
		Arguing as in \eqref{holder_equation_singular}, we can apply H\"older inequality to the four integrals above, obtaining that, up to some constant $C$,
		\begin{align*}
			\lVert  \Gamma(u)(t)\rVert_{L^{\hat{r}}_{rad}}\leq \,&  \eps e^{t(\lambda_{\bar{\alpha}}+\delta)}\left(C\left(\eps^{p-1}e^{T(p-1)(\lambda_{\bar{\alpha}}+\delta)}+\eps e^{T(\lambda_{\bar{\alpha}}+\delta)}\right) \right)\notag\\ & + C\left(e^{T((p-1)\lambda_{\bar{\alpha}}-\delta)}+ e^{T(\lambda_{\bar{\alpha}}-\delta)}\right)e^{t(\lambda_{\bar{\alpha}}+\delta)}.      
		\end{align*}
		Therefore, we can find $\eps>0$ small enough and consequently $T$ negative enough, possibly smaller then the previous ones such that
		\begin{align*}
			C\left(\eps^{p-1}e^{T(p-1)(\lambda_{\bar{\alpha}}+\delta)}+\eps e^{T(\lambda_{\bar{\alpha}}+\delta)}\right)\leq \frac{1}{4},\quad
			C\left(e^{T((p-1)\lambda_{\bar{\alpha}}-\delta)}+ e^{T(\lambda_{\bar{\alpha}}-\delta)}\right)\leq \frac{\eps}{4}.
		\end{align*}
		As a consequence
		\begin{align*}
			\lVert \Gamma(u)(t)\rVert_{L^{\hat{q},\hat{r}}}\leq \eps e^{t(\lambda_{\bar{\alpha}}+\delta)}.    
		\end{align*}
		This shows that for each $p>1+\frac{2}{d}$ we can find $\delta,\eps,T$ as described in Proposition \ref{proposition ancient solution} such that
		$\Gamma$ maps $B_{\eps}(W^T_\delta)$ into itself.
		
		It remains to show that, by possibly restricting the choice of the parameters, the map $\Gamma$ constructed above is a contraction. This can be done by similar reasoning to the one above, exploiting \eqref{estimate contraction} in place of \eqref{estimate continuity}. Let us start with the case $1+\frac{2}{d}<p\leq 2.$ Let $u, v \in B_{\eps}(W^T_\delta).$ 
		First, we have, according to Proposition \ref{prop:semigroup}, Lemma \ref{lemma_nonlinearity} for $x=\bar{U},\ y=U^{lin}+u,\ z=U^{lin}+v$, H\"older's inequality, and \eqref{interp_ineq_1}, that
		\begin{align*}
			&\lVert \Gamma(u)(t)-\Gamma(v)(t)\rVert_{L^{\hat{q}}_{rad}} \\
			& \lesssim \int_{-\infty}^t e^{(\lambda_{\bar{\alpha}}+\delta)(t-\tau)} \left(\lVert U^{lin}(\tau) \rVert^{p-1}_{L^{\hat{q},\hat{r}}}+\lVert u(\tau) \rVert^{p-1}_{L^{\hat{q},\hat{r}}}+\lVert v(\tau) \rVert^{p-1}_{L^{\hat{q},\hat{r}}}\right)\lVert u(\tau)-v(\tau) \rVert_{L^{\hat{q},\hat{r}}}d\tau.
		\end{align*}
		Since $u,v \in B_{\eps}(W^T_\delta)$, we have that
		\begin{align}\label{estimates contraction1}
			\lVert u(\tau)\rVert_{L^{\hat{q},\hat{r}}}^{p-1}+\lVert v(\tau)\rVert_{L^{\hat{q},\hat{r}}}^{p-1}&\leq 2\eps^{p-1}e^{(p-1)(\lambda_{\bar{\alpha}}+\delta)\tau},\notag\\ \lVert u(\tau)-v(\tau)\rVert_{L^{\hat{q},\hat{r}}}& \leq e^{(\lambda_{\bar{\alpha}}+\delta)\tau}\lVert u-v\rVert_{W^T_\delta}. 
		\end{align}
		Again, by \eqref{eq:decay_Ulin},
		\begin{align*}
			\lVert U^{lin}(\tau)\rVert_{L^{\hat{q},\hat{r}}}^{p-1}= Ce^{(p-1)\lambda_{\bar{\alpha}}\tau}.   
		\end{align*}
		Therefore, since $\delta<(p-1)\lambda_{\bar{\alpha}}$
		\begin{align*}
			\lVert \Gamma(u)(t)-\Gamma(v)(t)\rVert_{L^{\hat{q}}_{rad}}&\lesssim e^{t(p\lambda_{\bar{\alpha}}+\delta)}\left(1+\eps^{p-1}e^{(p-1)\delta t}\right)\lVert u-v\rVert_{W^T_\delta}\\ & \lesssim e^{t(\lambda_{\bar{\alpha}}+\delta)}\lVert u-v\rVert_{W^T_\delta}\left(1+\eps^{p-1}e^{T(p-1)\delta}\right)e^{T(p-1)\lambda_{\bar{\alpha}}}.
		\end{align*}
		By choosing $\eps>0$ small enough and then $T$ negative enough, possibly smaller then the choices above, we get that
		\begin{align}\label{contractionstimate1pleq2}
			\lVert \Gamma(u)(t)-\Gamma(v)(t)\rVert_{L^{\hat{q}}_{rad}}\leq  \frac{1}{4}e^{t(\lambda_{\bar{\alpha}}+\delta)}\lVert u-v\rVert_{W^T_\delta}.   
		\end{align}
		To estimate the higher norm $\lVert \Gamma(u)(t)-\Gamma(v)(t)\rVert_{L^{\hat{r}}_{rad}}$, we do the following. First, according to the regularization properties of the semigroup, i.e., Proposition \ref{prop:regularization L}, by setting $s=\frac{d(p-1)}{\hat{r}}<2$, we get, due to Lemma \ref{lemma_nonlinearity} for $x=\bar{U},\ y=U^{lin}+u,\ z=U^{lin}+v$, H\"older's inequality, equation \eqref{eq:decay_Ulin}, and \eqref{estimates contraction1} that for $t\in (-\infty,T)$
		\begin{equation*}
			\lVert \Gamma(u)(t)-\Gamma(v)(t)\rVert_{L^{\hat{r}}_{rad}}   
			\lesssim  \lVert u-v\rVert_{W^T_\delta} e^{(\lambda_{\bar{\alpha}}+\delta)t} \int_{-\infty}^t \frac{e^{(p-1)\lambda_{\bar{\alpha}}\tau}}{(t-\tau)^{s/2}}(1+\eps^{p-1}e^{(p-1)\delta\tau}) d\tau.
		\end{equation*}
		By arguing as in \eqref{holder_equation_singular}, we get, up to a possibly different constant $C$, that
		\begin{equation*}
			\lVert \Gamma(u)(t)-\Gamma(v)(t)\rVert_{L^{\hat{r}}_{rad}} \leq e^{t(\lambda_{\bar{\alpha}}+\delta)}\lVert u-v\rVert_{W^T_\delta}\left(1+\eps^{p-1}e^{T(p-1)\delta}\right)e^{T(p-1)\lambda_{\bar{\alpha}}}C.
		\end{equation*}
		Choosing $\eps>0$ small enough and then $T$ negative enough, possibly smaller then the previous values, we obtain
		\begin{align}\label{contractionestimate2pleq2}
			\lVert \Gamma(u)(t)-\Gamma(v)(t)\rVert_{L^{\hat{r}}_{rad}}\leq  \frac{1}{4}e^{t(\lambda_{\bar{\alpha}}+\delta)}\lVert u-v\rVert_{W^T_\delta}.
		\end{align}
		Combining \eqref{contractionstimate1pleq2} and \eqref{contractionestimate2pleq2} we get that $\Gamma$ is a contraction on $B_{\eps}(W^T_\delta)$ if $p\in (1+\frac{2}{d},2]$.\\
		Assume now that $p>2$. Arguing as in the case $p \leq 2$ we get by \eqref{interp_ineq_2} that
		\begin{align*}
			\lVert \Gamma(u)(t)&-\Gamma(v)(t)\rVert_{L^{\hat{q}}_{rad}}
			\\ \lesssim\, & \int_{-\infty}^t e^{(\lambda_{\bar{\alpha}}+\delta)(t-\tau)}
			\lVert u(\tau)-v(\tau)\rVert_{L^{\hat{q},\hat{r}}}
			\Big[\big(\lVert U^{lin}(\tau)\rVert_{L^{\hat{q},\hat{r}}}^{p-1}+\lVert u(\tau)\rVert_{L^{\hat{q},\hat{r}}}^{p-1}+\lVert v(\tau)\rVert_{L^{\hat{q},\hat{r}}}^{p-1}\big)\\ & \hspace{6cm} +\big(\lVert U^{lin}(\tau)\rVert_{L^{\hat{q},\hat{r}}}+\lVert u(\tau)\rVert_{L^{\hat{q},\hat{r}}}+\lVert v(\tau)\rVert_{L^{\hat{q},\hat{r}}}\big)\Big] d\tau
			\\  =\,&H_1(t)+H_2(t).
		\end{align*}
		Since now $\delta<\lambda_{\bar{\alpha}}$, the integral $H_1$ can be treated for as the corresponding term in the case $p \leq 2$ in order to find $\eps$ and $T$ such that 
		\begin{align*}
			H_1(t)\leq \frac{1}{8}e^{(\lambda_{\bar{\alpha}}+\delta)t}\lVert u-v\rVert_{W^T_\delta}.
		\end{align*}
		Concerning $H_2(t)$, since $\delta<\lambda_{\bar{\alpha}}$, by \eqref{eq:decay_Ulin} and the definition of $W^T_\delta$, we get 
		\begin{align*}
			H_2(t)&\leq  e^{(\lambda_{\bar{\alpha}}+\delta)t}\lVert u-v\rVert_{W^T_\delta} C\int_{-\infty}^t e^{\lambda_{\bar{\alpha}}\tau}(1+\eps e^{\delta \tau})d\tau\\ & \leq e^{t(\lambda_{\bar{\alpha}}+\delta)}\lVert u-v\rVert_{W^T_\delta} C\left(1+\eps e^{\delta T}\right)e^{\lambda_{\bar{\alpha}}T}.
		\end{align*}
		Possibly reducing $\eps$ and then taking more negative $T$
		we obtain 
		\begin{align*}
			H_2(t)<\frac{1}{8}e^{(\lambda_{\bar{\alpha}}+\delta)t}\lVert u-v\rVert_{W^T_\delta}.
		\end{align*}
		In conclusion, we proved that
		\begin{align}\label{estimate_contraction_1_pgeq2}
			\lVert \Gamma(u)(t)-\Gamma(v)(t)\rVert_{L^{\hat{q}}_{rad}}\leq \frac{1}{4}e^{(\lambda_{\bar{\alpha}}+\delta)t}\lVert u-v\rVert_{W^T_\delta}.
		\end{align}
		Concerning $\lVert \Gamma(u)(t)-\Gamma(v)(t)\rVert_{L^{\hat{r}}_{rad}}$, by the regularization properties of the semigroup, i.e., Proposition \ref{prop:regularization L}, and setting $s=\frac{d(p-1)}{\hat{r}}<2$, we get, according to Lemma \ref{lemma_nonlinearity} for $x=\bar{U},\ y=U^{lin}+u,\ z=U^{lin}+v$, H\"older's inequality, equation \eqref{eq:decay_Ulin}, and \eqref{estimates contraction1} that for $t\in (-\infty,T)$
		\begin{align*}
			\lVert \Gamma(u)(t)&-\Gamma(v)(t)\rVert_{L^{\hat{r}}_{rad}}
			\\ & 
			\lesssim \int_{-\infty}^t \frac{e^{(\lambda_{\bar{\alpha}}+\delta)(t-\tau)}}{(t-\tau)^{s/2}}\lVert u(\tau)-v(\tau)\rVert_{L^{\hat{q},\hat{r}}} \Big[\big(\lVert U^{lin}(\tau)\rVert_{L^{\hat{q},\hat{r}}}^{p-1}+\lVert u(\tau)\rVert_{L^{\hat{q},\hat{r}}}^{p-1}+\lVert v(\tau)\rVert_{L^{\hat{q},\hat{r}}}^{p-1}\big)
			\\ & \hspace{6cm} +\big(\lVert U^{lin}(\tau)\rVert_{L^{\hat{q},\hat{r}}}+\lVert u(\tau)\rVert_{L^{\hat{q},\hat{r}}}+\lVert v(\tau)\rVert_{L^{\hat{q},\hat{r}}}\big)\Big] d\tau
			\\ & =L_1(t)+L_2(t).
		\end{align*}
		The integral $L_1(t)$ can be treated, according to $\delta<\lambda_{\bar{\alpha}}$, as the corresponding term in the analysis of the case $p \leq 2$, so as to find $\eps$ and $T$ such that 
		\begin{align*}
			L_1(t)\leq \frac{1}{8}e^{(\lambda_{\bar{\alpha}}+\delta)t}\lVert u-v\rVert_{W^T_\delta}.
		\end{align*}
		Concerning $L_2(t)$, thanks to the definition of $W^T_\delta$, \eqref{eq:decay_Ulin} and \eqref{holder_equation_singular} we get a term analogous to $H_2(t)$ above. Therefore, up to choosing a smaller $\eps$ and then a more negative $T$ we get 
		\begin{align*}
			L_2(t)\leq \frac{1}{8}e^{(\lambda_{\bar{\alpha}}+\delta)t}\lVert u-v\rVert_{W^T_\delta}.
		\end{align*}
		In conclusion, we proved that
		\begin{align}\label{estimate_contraction_2_pgeq2}
			\lVert \Gamma(u)(t)-\Gamma(v)(t)\rVert_{L^{\hat{r}}_{rad}}\leq \frac{1}{4}e^{(\lambda_{\bar{\alpha}}+\delta)t}\lVert u-v\rVert_{W^T_\delta}.
		\end{align}
		Combining \eqref{estimate_contraction_1_pgeq2} and \eqref{estimate_contraction_2_pgeq2} we conclude that $\Gamma$ is a contraction on $B_{\eps}(W^T_\delta)$ for each $p>1+\frac{2}{d}$, thereby completing the proof.
	\end{proof}
	The different asymptotic behavior of $U^{lin}$ and $U^{per}$ as $\tau\rightarrow-\infty$ implies that $\psi\neq 0$, and consequently $U_1\neq U_2$. As a corollary to Proposition \ref{proposition ancient solution} we now derive the main result of this section; we formulate it in the form of a theorem to be invoked later on.
	\begin{theorem}\label{thm:existence_ancient_solutions}
		Assume \eqref{hp: coefficients 1}. Let $\bar{\alpha}>0$ be such that for the corresponding expander $\bar{U}$ the operator $L_{\bar{\alpha}}: \mathcal{D}(L_{\bar{\alpha}}) \subseteq L^{\hat{q},\hat{r}} \rightarrow  L^{\hat{q},\hat{r}}$ admits a maximal positive eigenvalue $\la_{\bar{\alpha}}$. Then, for every sufficiently small $\eps>0$ there is $T<0$ such that there exists an ancient solution $\psi \in C((\infty,T],L^{\hat{q},\hat{r}})$ to \eqref{eq:perturb} for which 
		\begin{align*}
			\lVert \psi(\tau)\rVert_{L^{\hat{q},\hat{r}}}<\eps,
		\end{align*}
		and
		\begin{align*}
			\lVert \psi(\tau)\rVert_{L^{\hat{r}/p}_{rad}}> e^{\lambda_{\bar{\alpha}}\tau}\frac{\lVert \bar{U}^{lin}\rVert_{L^{\hat{r}/p}_{rad}}}{2},
		\end{align*}
		for  $\tau\in (-\infty,T]$.
	\end{theorem}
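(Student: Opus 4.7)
The plan is to take $\psi := U^{lin} + U^{per}$, where $U^{lin}$ is the growing mode defined in \eqref{def:Ulin} and $U^{per}$ is the function produced by Proposition \ref{proposition ancient solution}. The equation \eqref{eq:perturb} for $\psi$ then follows immediately from \eqref{equation_lin} together with \eqref{eq:ancient}, and the continuity in $\tau$ with values in $L^{\hat{q},\hat{r}}$ is inherited from that of $U^{lin}$ and $U^{per}$. So the real content is tuning the parameters $\delta, \eps, T$ furnished by Proposition \ref{proposition ancient solution} so that both the upper bound in $L^{\hat{q},\hat{r}}$ and the lower bound in $L^{\hat{r}/p}_{rad}$ hold simultaneously on $(-\infty,T]$.

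For the upper bound, I would fix $\delta \in (0, \min\{p-1,1\}\la_{\bar{\alpha}})$, apply Proposition \ref{proposition ancient solution} for some small $\eps_0 > 0$, and then use the triangle inequality together with \eqref{eq:decay_Ulin} and \eqref{decaying_U_per}:
\begin{align*}
\lVert \psi(\tau)\rVert_{L^{\hat{q},\hat{r}}} \leq e^{\lambda_{\bar{\alpha}}\tau}\lVert \bar{U}^{lin}\rVert_{L^{\hat{q},\hat{r}}} + \eps_0 e^{(\lambda_{\bar{\alpha}}+\delta)\tau}.
\end{align*}
Both terms tend to $0$ as $\tau \to -\infty$, so taking $T$ sufficiently negative (depending on the target $\eps$) delivers $\lVert \psi(\tau)\rVert_{L^{\hat{q},\hat{r}}} < \eps$ on $(-\infty,T]$.

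For the lower bound in $L^{\hat{r}/p}_{rad}$ I would exploit the assumption $\hat{q} \leq \hat{r}/p \leq \hat{r}$ (which follows from $\hat{r} \geq p\hat{q}$): by interpolation one has the continuous embedding $L^{\hat{q},\hat{r}} \hookrightarrow L^{\hat{r}/p}_{rad}$, so Proposition \ref{proposition ancient solution} yields $\lVert U^{per}(\tau)\rVert_{L^{\hat{r}/p}_{rad}} \lesssim \eps_0 e^{(\lambda_{\bar{\alpha}}+\delta)\tau}$. Since $\lVert U^{lin}(\tau)\rVert_{L^{\hat{r}/p}_{rad}} = e^{\lambda_{\bar{\alpha}}\tau}\lVert \bar{U}^{lin}\rVert_{L^{\hat{r}/p}_{rad}}$, the reverse triangle inequality gives
\begin{align*}
\lVert \psi(\tau)\rVert_{L^{\hat{r}/p}_{rad}} \geq e^{\lambda_{\bar{\alpha}}\tau}\lVert \bar{U}^{lin}\rVert_{L^{\hat{r}/p}_{rad}} - C\eps_0 e^{(\lambda_{\bar{\alpha}}+\delta)\tau},
\end{align*}
and the positivity of $\delta$ together with $\tau \leq T < 0$ makes the second term strictly smaller than $e^{\lambda_{\bar{\alpha}}\tau}\lVert \bar{U}^{lin}\rVert_{L^{\hat{r}/p}_{rad}}/2$ provided $\eps_0 e^{\delta T}$ is small enough. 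Shrinking $\eps_0$ and, if needed, further decreasing $T$ then enforces this bound uniformly on $(-\infty,T]$.

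The step I expect to require the most care is keeping the two parameter choices consistent: the constraints from Proposition \ref{proposition ancient solution} (which fix $\eps_0$ and $T$ to make $U^{per}$ exist and obey \eqref{decaying_U_per}) must be compatible with the independent tightening of $\eps_0$ and $T$ needed to guarantee the lower bound in $L^{\hat{r}/p}_{rad}$. Since both conditions only require $\eps_0$ sufficiently small and $T$ sufficiently negative, no circular dependence arises; one just chooses them in the right order ($\delta$ first, then $\eps_0$, then $T$), which completes the argument.
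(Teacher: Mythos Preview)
Your proposal is correct and follows essentially the same approach as the paper: define $\psi = U^{lin} + U^{per}$, use the triangle inequality with \eqref{eq:decay_Ulin} and \eqref{decaying_U_per} for the upper bound, and use interpolation $L^{\hat{q},\hat{r}} \hookrightarrow L^{\hat{r}/p}_{rad}$ together with the reverse triangle inequality for the lower bound, tuning $T$ sufficiently negative at the end. The only cosmetic difference is that the paper works with a single parameter $\eps$ throughout (imposing the explicit conditions $e^{\lambda_{\bar{\alpha}}T} < \eps/(2\lVert \bar{U}^{lin}\rVert_{L^{\hat{q},\hat{r}}})$ and $e^{\delta T} < \lVert \bar{U}^{lin}\rVert_{L^{\hat{r}/p}_{rad}}/\eps$), whereas you separate the fixed-point radius $\eps_0$ from the target smallness $\eps$; both orderings of the parameter choices are sound.
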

	\begin{proof}
		Let us choose $\delta,\eps,T$ such that Proposition \ref{proposition ancient solution} holds. Furthermore, by possibly choosing more negative $T$, we ensure that
		\begin{align}\label{cond 1 barT}
			e^{\lambda_{\bar{\alpha}}T}< \frac{\eps}{2\lVert \bar{U}^{lin}\rVert_{L^{\hat{q},\hat{r}}}},\\
			e^{\delta T}< \frac{\lVert \bar{U}^{lin}\rVert_{L^{\hat{r}/p}_{rad}}}{\eps}. \label{cond 2 barT}
		\end{align}
		Let us now define
		\begin{align*}
			\psi(\tau):=U^{lin}(\tau)+U^{per}(\tau)
		\end{align*}
		for  $\tau\in (-\infty,T]$.
		In particular, we have that $\psi\in C((-\infty,T], L^{\hat{q},\hat{r}})$ and $\psi$ solves \eqref{eq:perturb}. Moreover, by \eqref{eq:decay_Ulin}, \eqref{decaying_U_per}, and the choice of $T$ in \eqref{cond 1 barT}, it follows that
		\begin{align*}
			\lVert \psi(\tau)\rVert_{L^{\hat{q},\hat{r}}}<\eps\quad \text{for} \quad \tau\in(-\infty,T],
		\end{align*}
		and by \eqref{cond 2 barT}, \eqref{decaying_U_per}, and interpolation, we have that
		\begin{align*}
			\lVert U^{per}(\tau)\rVert_{L^{\hat{r}/p}_{rad}}<\frac{\eps}{2}e^{\delta\tau}e^{\lambda_{\bar{\alpha}}\tau}<\frac{\lVert \bar{U}^{lin}\rVert_{L^{\hat{r}/p}_{rad}}}{2}e^{\lambda_{\bar{\alpha}}\tau},  
		\end{align*}
		for  $\tau\in(-\infty,T]$.
		Therefore by \eqref{def:Ulin}
		\begin{align*}
			\lVert \psi(\tau)\rVert_{L^{\hat{r}/p}_{rad}}&\geq \lVert U^{lin}(\tau)\rVert_{L^{\hat{r}/p}_{rad}}-\lVert U^{per}(\tau)\rVert_{L^{\hat{r}/p}_{rad}}   \\ & > \frac{\lVert \bar{U}^{lin}\rVert_{L^{\hat{r}/p}_{rad}}}{2}e^{\lambda_{\bar{\alpha}}\tau},
		\end{align*}
		for  $\tau\in (-\infty,T]$. This completes the proof.
	\end{proof}
	
	\section{Linear heat equation with a self-similar potential}\label{sec:linear_heat_with_potential}
	\noindent The two solutions $U_1,U_2$ that we constructed in the previous section yield two radial solutions of the nonlinear heat equation \eqref{Eq:Heat}
	\begin{align}\label{PDE}
		\partial_t u=\Delta u+\lvert u\rvert^{p-1}u,
	\end{align}
	that, for $q<q_c$, converge locally in $L^q(\R^d)$ to
	\begin{equation}\label{Eq:u_0}
		\tilde{u}_0(x)=\frac{\ell(\bar{\alpha})}{|x|^\frac{2}{p-1}},
	\end{equation}
	as $t \rightarrow 0$.
	This profile, however, fails to belong to $L^q(\R^d)$ precisely when $q<q_c$. To enforce integrability, we truncate $\tilde{u}_0$. However, the part that is cut off has to be such that the alteration of solutions caused by the removal is such that it yields two different solutions that are locally in $L^q(\R^d)$. In the rest of the paper, we show that such construction is possible.
	Throughout, we will assume that
	\begin{align}\label{hp: coefficients 2}
		d \geq 3, \quad 1+\frac{2}{d} < p <p_{JL}, \quad	1\leq q_{a}< q_c<pq_{a}=r.
	\end{align}
	In particular, for the auxiliary parameter $q_a$, we have $q_{a}>\frac{d(p-1)}{2p}$.
	The first step is to derive the equation(s) that the deformed solutions should satisfy. 
	%The results of Section \ref{sec: ancient solutions} show that for $\hat{q}=1,\, \hat{r}=pr$, we have 
	The idea is to write the initial datum $\tilde{u}_0$ as
	$\tilde{u}_0=u_0+w_0$, where $u_0$ is in $L^q_{rad}$ for each $q<q_c$, while $w_0$ only belongs to $L^r_{rad}$. Then, we analyze the equation governing the evolution of $w_0$, so as to subtract it from the two radial solutions above, to obtain two mild $L^q$-solutions with initial datum $u_0$. 
	
	Let $\bar{U}$ and $\psi$ be from Theorem \ref{thm:existence_ancient_solutions} with $\hat{q}=1$ and $\hat{r}=pr$. Then, for $t \in (0,e^T)$ we set
	\begin{align*}
		\tilde{u}_{1}(t,x)&=\frac{1}{t^{\frac{1}{p-1}}}\bar{U}\left(\frac{|x|}{\sqrt{t}}\right),\\ \quad \tilde{u}_{2}(t,x)&=\frac{1}{t^{\frac{1}{p-1}}}\bar{U}\left(\frac{|x|}{\sqrt{t}}\right)+\frac{1}{t^{\frac{1}{p-1}}}\psi\left(\ln t,\frac{|x|}{\sqrt{t}}\right).
	\end{align*}
	Recall that $\bar{U} \in C^2[0,\infty)$ and 
	\begin{align*}
		\bar{U}(\rho)= O\left(\rho^{-\frac{2}{p-1}}\right) \quad \text{as} \quad  \rho \rightarrow \infty.
	\end{align*}
	Theorem \ref{thm:existence_ancient_solutions} furthermore ensures that $\psi\in C((-\infty,T],L^{1,pr})$ and
	\begin{align*}
		\lVert \psi(\tau)\rVert_{L^{1,pr}}<\eps \quad \text{and} \quad \lVert \psi(\tau)\rVert_{L^{r}_{rad}}>e^{\lambda_{\bar{\alpha}}\tau}\frac{\lVert \bar{U}^{lin}\rVert_{L^{r}_{rad}}}{2}
	\end{align*}
	for $\tau\in (-\infty,T]$.
	Now, let us argue formally so as to find the equation to be satisfied by $w$. Let $\tilde{u}$ be of the form $\tilde{u}=\bar{u}+u'$ where both $\tilde{u}$ and $\bar{u}$ solve \eqref{PDE} with the same initial condition $\tilde{u}_0$, while $u$ solves the same equation with the initial condition $u_0$. Therefore, for $w=\tilde{u}-u,$ we have that
	\begin{align*}
		\partial_t w=\,&\partial_t \tilde{u}-\partial_t u\\ =\,&\Delta w+\lvert \tilde{u}\rvert^{p-1}\tilde{u}-\lvert u\rvert^{p-1}u\\    = \,&\Delta w+\lvert \bar{u}+u'\rvert^{p-1}(\bar{u}+u')-\lvert \bar{u}+u'-w\rvert^{p-1}(\bar{u}+u'-w)\pm \lvert \bar{u}\rvert^{p-1}\bar{u}\\  =\,& \Delta w+p\lvert \bar{u}\rvert^{p-1}u'+\left(\lvert \bar{u}+u'\rvert^{p-1}(\bar{u}+u')-\lvert \bar{u}\rvert^{p-1}\bar{u}-p\lvert \bar{u}\rvert^{p-1}u'\right) \\ & -p\lvert \bar{u}\rvert^{p-1}(u'-w)-\left(\lvert \bar{u}+u'-w\rvert^{p-1}(\bar{u}+u'-w)-\lvert \bar{u}\rvert^{p-1}\bar{u}-p\lvert \bar{u}\rvert^{p-1}(u'-w)\right)\\  =\,&\Delta w+p\lvert \bar{u}\rvert^{p-1}w+\left(\lvert \bar{u}+u'\rvert^{p-1}(\bar{u}+u')-\lvert \bar{u}\rvert^{p-1}\bar{u}-p\lvert \bar{u}\rvert^{p-1}u'\right)\\ & -\left(\lvert \bar{u}+u'-w\rvert^{p-1}(\bar{u}+u'-w)-\lvert \bar{u}\rvert^{p-1}\bar{u}-p\lvert \bar{u}\rvert^{p-1}(u'-w)\right).
	\end{align*}
	In conclusion, we have the Cauchy problem
	\begin{align}\label{nonlinear PDE}
		\begin{cases}
			\partial_t w=\Delta w+p\lvert \bar{u}\rvert^{p-1}w+f(w),\\
			w(0)=w_0,
		\end{cases}
	\end{align}
	where
	\begin{align}\label{Def_f(w)}
		f(w)&=\lvert \bar{u}+u'\rvert^{p-1}(\bar{u}+u') -\lvert \bar{u}+u'-w\rvert^{p-1}(\bar{u}+u'-w)-p\lvert \bar{u}\rvert^{p-1}w.
	\end{align}
	In view of the discussion above, we assume that
	\begin{align}\label{Eq:u_bar}
		\bar{u}(t,x)=\frac{1}{t^{\frac{1}{p-1}}}\bar{U}\left(\frac{|x|}{\sqrt{t}}\right). 
	\end{align}
	Then, in case of $\tilde{u}_{1}$ (resp.~$\tilde{u}_2$) we have that $u'=0$ (resp.~$u'=\frac{1}{t^{\frac{1}{p-1}}}\psi\big( \ln t,\frac{|x|}{\sqrt{t}}\big)$). 
	
	Now, under the above assumptions on the time dependent potential $\bar{V}:=p|\bar{u}|^{p-1}$ and the forcing term $f$, we aim at constructing local solutions to \eqref{nonlinear PDE} in $L^r_{rad}(\R^d)$. This is, however, not straightforward due to the fact that $\bar{V}$ is singular at $t=0$ and does not belong to $L^r_{rad}(\R^d)$. Local existence and uniqueness of solutions turns out to hold if the maximal positive eigenvalue $\la_{\bar{\alpha}}$ of $\bar{U}$ is small enough (see \eqref{Item instabilty} below). This well-posedness result is one of the main points of the following technical lemma, which, at the same time, sets the stage for the main result of this section.
	\begin{lemma}\label{lem:linear PDE}
		Assume \eqref{hp: coefficients 2}. Let $\bar{\alpha}>0$ be such that for the corresponding expander $\bar{U}$ the operator $L_{\bar{\alpha}}: \mathcal{D}(L_{\bar{\alpha}}) \subseteq L^{q_a,r} \rightarrow  L^{q_a,r}$ admits a maximal positive eigenvalue $\la_{\bar{\alpha}}$ for which
		\begin{align}
			\label{Item instabilty} {\lambda}_{\bar{\alpha}}<\frac{1}{p-1}-\frac{d}{2r}.  
		\end{align}
		Assume further that $w_0\in L^r_{rad}(\R^d)$ and $f:(0,1)\times \R^d \rightarrow \R$ is such that
		\begin{align*}
			M:=\sup_{t\in (0,1)} \left(\lVert f(t)\rVert_{L^r_{rad}}t+\lVert f(t)\rVert_{L^{q_{a}}_{rad}}t^{1+\frac{d}{2r}-\frac{d}{2q_a}}\right) <\infty,
		\end{align*}
		and
		\begin{align*}
			\operatorname{lim}_{t\rightarrow 0} \left( \lVert f(t)\rVert_{L^r_{rad}}t+\lVert f(t)\rVert_{L^{q_{a}}_{rad}}t^{1+\frac{d}{2r}-\frac{d}{2q_a}} \right)=0.
		\end{align*}
		Then there exists a unique $w\in C([0,1],L^r_{rad}(\R^d))$ that solves the Cauchy problem 
		\begin{align}\label{linear pde}
			\begin{cases}
				\partial_t w=\Delta w+p\lvert \bar{u}\rvert^{p-1}w+f,\\
				w(0)=w_0, 
			\end{cases} 
		\end{align}
		where $\bar{u}$ given by \eqref{Eq:u_bar}.
		% In particular,
		%	\begin{align}\label{initial_con_min}
		%		\operatorname{lim}_{t\rightarrow 0} \lVert w(t)-w_0\rVert_{L^r_{rad}}=0.
		%	\end{align}
		Furthermore, $w$ satisfies
		\begin{align}\label{en_est_lin_1}
			\lVert w\rVert_{ L^{\infty}((0,1),L^r_{rad})}+\operatorname{sup}_{t\in (0,1)}t^{\frac{d}{2r}\left(\frac{p-1}{p}\right)}\lVert w(t)\rVert_{ L^{pr}_{rad}}\lesssim M+\lVert w_0\rVert_{L^r_{rad}},
		\end{align}
		\begin{align}\label{en_est_lin_2}
			\operatorname{lim}_{t\rightarrow 0} t^{\frac{d}{2r}\left(\frac{p-1}{p}\right)}&\lVert w(t)\rVert_{ L^{pr}_{rad}}=0.
		\end{align}
	\end{lemma}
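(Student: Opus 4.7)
The approach is to pass to similarity variables, where the singular time-dependent potential $p|\bar{u}|^{p-1}$ becomes the $\tau$-independent multiplier $V_{\bar\alpha}$, and to exploit the semigroup $S_{\bar\alpha}$ from Section~\ref{Sec:Expanders} combined with the spectral gap~\eqref{Item instabilty}. I would first split $w = e^{t\Delta} w_0 + u$, so that $u$ satisfies the same equation with $u(0)=0$ and modified forcing $\tilde f = f + p|\bar{u}|^{p-1} e^{t\Delta} w_0$. A direct H\"older computation, using $\|\bar{u}(t)\|_{L^r} \lesssim t^{-1/(p-1)+d/(2r)}$ (finite since $r>q_c$ by~\eqref{hp: coefficients 2}) together with $\|\bar{u}(t)\|_{L^\infty} \lesssim t^{-1/(p-1)}$, shows that $\tilde f$ satisfies the same weighted $L^{q_a}$/$L^r$ bounds as $f$, with $M$ replaced by $M + C\|w_0\|_{L^r}$.

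Next I would pass $u$ to similarity variables $U(\tau,\rho) = e^{\tau/(p-1)} u(e^\tau, e^{\tau/2}\rho)$ (and $\tilde F$ analogously). The equation becomes $\partial_\tau U = L_{\bar\alpha} U + \tilde F$ on $(-\infty,0]$, with the zero initial condition encoded as decay at $\tau\to -\infty$, and the rescaled forcing obeys $\|\tilde F(\tau)\|_{L^{q_a,r}} \lesssim (M+\|w_0\|_{L^r}) e^{\mu\tau}$, where $\mu := 1/(p-1) - d/(2r) > 0$. The core of the argument is the ancient Duhamel formula
\begin{equation*}
U(\tau) = \int_{-\infty}^\tau S_{\bar\alpha}(\tau-\sigma)\, \tilde F(\sigma)\, d\sigma,
\end{equation*}
whose convergence follows from Proposition~\ref{prop:semigroup}: by~\eqref{Item instabilty}, the growth rate $\bar\lambda_{\bar\alpha} + \delta$ of $S_{\bar\alpha}$ is strictly smaller than $\mu$ for small $\delta>0$, so the integrand decays exponentially at $\sigma \to -\infty$ and direct integration yields $\|U(\tau)\|_{L^{q_a,r}} \lesssim (M+\|w_0\|_{L^r})\, e^{\mu\tau}$. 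For the $L^{pr}$ part of~\eqref{en_est_lin_1} I would replace the growth estimate by the smoothing estimate~\eqref{eq:regularization_L2}, whose extra singular factor $(\tau-\sigma)^{-d(p-1)/(2pr)}$ remains integrable against the exponential (since $pr > q_c$). Translating back via the scaling of $L^r$ and $L^{pr}$ under the maps $\Phi_t$ produces~\eqref{en_est_lin_1}.

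For the vanishing~\eqref{en_est_lin_2} and the continuity $w(t) \to w_0$ as $t \to 0$, I would invoke the limit hypothesis on $f$: after the substitution $s = \tau-\sigma$, dominated convergence allows one to pass $\tau\to-\infty$ inside the Duhamel integral, and the $o$-condition on $f$ then transfers to $o(e^{\mu\tau})$ decay of the $f$-contribution to $U$. Uniqueness is then automatic: the difference of two solutions would be a bounded ancient solution of the homogeneous similarity-variable equation, forced to vanish by the same ancient Duhamel estimate. The main obstacle is that the piece of $\tilde f$ arising from $p|\bar{u}|^{p-1} e^{t\Delta} w_0$ is only $O(e^{\mu\tau})$ in the rescaled norm, not $o(e^{\mu\tau})$, so the corresponding contribution to $u$ must be shown to cancel, in both the $L^r$ and the weighted $L^{pr}$ norms, with the heat-flow piece $e^{t\Delta} w_0$ that was peeled off at the start, in order to recover continuity at $t=0$ and~\eqref{en_est_lin_2}; making this cancellation quantitative relies crucially on the strict inequality in~\eqref{Item instabilty}, which provides the exponent gap needed to apply dominated convergence to the relevant splitting of the Duhamel integral.
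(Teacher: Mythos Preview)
Your overall strategy coincides with the paper's: peel off $P(t)w_0$, pass the remainder to similarity variables, and invoke the ancient Duhamel formula for $S_{\bar\alpha}$, using the spectral gap~\eqref{Item instabilty} to make the integral converge and to obtain~\eqref{en_est_lin_1}. (The paper omits the factor $e^{\tau/(p-1)}$ in its rescaled unknown $\phi$, so the generator appears as $L_{\bar\alpha}-\tfrac{1}{p-1}$; this is purely cosmetic.)

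The gap is in your handling of continuity at $t=0$ and of~\eqref{en_est_lin_2}. You correctly identify that the contribution of $p|\bar u|^{p-1}P(t)w_0$ to the rescaled forcing is only $O(e^{\mu\tau})$, which through the ancient Duhamel yields only $\|u(t)\|_{L^r}=O(1)$. But your proposed fix --- a ``cancellation'' with the heat piece --- does not work: since $w=P(t)w_0+u$ and $P(t)w_0\to w_0$ in $L^r$, what is required is precisely $u(t)\to 0$ in $L^r$, with nothing left to cancel against; the same applies to the weighted $L^{pr}$ norm. The strict inequality~\eqref{Item instabilty} by itself cannot turn an $O(e^{\mu\tau})$ forcing into an $o(e^{\mu\tau})$ response. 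The paper resolves this by \emph{density}: it first treats $w_0\in C_c^\infty$ and $f\in C_c^\infty((0,1)\times\R^d)$, for which $P(t)w_0$ sits in $L^{\tilde r}$ for some $\tilde r>r$, and a different H\"older splitting of $|\bar U|^{p-1}b$ then yields an improved rate $e^{\vartheta\tau}$ with $\vartheta>-d/(2r)$, giving exactly the $o(1)$ needed. The general $w_0\in L^r$ case follows by approximation, using the already-established bound~\eqref{en_est_lin_1} for stability.

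A smaller point on uniqueness: the growth bound~\eqref{eq:decay_semigroup} for $S_{\bar\alpha}$ is stated on $L^{\eta,\gamma}$ only when $\eta\le q_c$, so it is not available on $L^r$ alone (recall $r>q_c$). Your one-line ``bounded ancient solution vanishes'' therefore needs $L^{q_a}$-control of the difference, which is not part of the hypotheses. The paper obtains it by first running the \emph{physical}-variable Duhamel formula with the heat semigroup $P(t)$ to gain an $L^{q_a}$ bound on $w$, and only then applies the $L^{q_a,r}$ semigroup estimate in similarity variables to force $\phi\equiv 0$.
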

	\begin{proof}
		Let us start with the case $w_0\in C^{\infty}_c(\R^d),\ f\in C^{\infty}_c((0,1)\times \R^d)$. Let us look for a solution $w$ of the form
		\begin{align*}
			w(t,x)=P(t)w_0+\phi\left(\ln t,\frac{x}{\sqrt{t}}\right),
		\end{align*}
		$P(t)$ being the heat semigroup on $\R^d$.
		Let us define  $h:(0,\infty)\times \R^d \rightarrow \R$ and $g,b:(-\infty,0)\times \R^d \rightarrow \R$ by 
		\begin{align*}
			f(t,x)=\frac{1}{t}g\left(\ln t,\frac{x}{\sqrt{t}}\right),\quad h(t,x)=P(t)w_0,\quad h(t,x)=b\left(\ln t,\frac{x}{\sqrt{t}}\right).
		\end{align*}
		With this notation in mind, we find that $\phi$ solves
		\begin{align*}
			&\partial_\tau \phi=(L_{\bar{\alpha}}-\tfrac{1}{p-1} )\phi+p\lvert \bar{U}\rvert^{p-1}b+g,\quad (\tau,y)\in (-\infty,0)\times \R^d.
			%,\\ &\norm{\phi(\tau)}_{L^{r}_{rad}}\rightarrow 0 \quad \text{as} \quad \tau \rightarrow -\infty.
		\end{align*}
		Formally, a solution to the equation above is
		\begin{align}\label{Eq:Intergal}
			\phi(\tau)&=\int_{-\infty}^\tau e^{-\frac{1}{p-1}(\tau-s)}S_{\bar{\alpha}}(\tau-s)[p\lvert \bar{U}\rvert^{p-1}b+g ](s)ds.
		\end{align}
		In what follows, we show that the integral above, in fact, converges in $L^{q_a,r}$. By assumptions, we have that
		\begin{align}\label{forcing_scaling_ss}
			\lVert g(\tau)\rVert_{L^{q_a,r}}\leq e^{-\frac{\tau d}{2r}}M.
		\end{align}
		Secondly, by scaling arguments we have for each $\gamma\in [1,+\infty]$ 
		\begin{align*}
			\lVert b(\tau)\rVert_{L^\gamma_{rad}}=e^{-\frac{ \tau d}{2\gamma}}\lVert h(e^\tau)\rVert_{L^{\gamma}_{rad}}.
		\end{align*}
		This implies, by the contraction properties of the heat semigroup, that
		\begin{align*}
			\lVert b(\tau)\rVert_{L^r_{rad}}\leq e^{-\frac{ \tau d}{2r}}\lVert w_0\rVert_{L^{r}_{rad}}.
		\end{align*}
		Therefore
		\begin{align}\label{initial_condition_scaling_1}
			\lVert \lvert\bar{U}\rvert^{p-1}b(\tau)\rVert_{L^r_{rad}}& \leq \lVert \bar{U}\rVert_{L^{\infty}}^{p-1}\lVert w_0\rVert_{L^{r}_{rad}}e^{-\frac{\tau d}{2r}}. 
		\end{align}
		In order to estimate $\lVert \lvert\bar{U}\rvert^{p-1}b(\tau)\rVert_{L^{q_a}_{rad}}$ we use the fact that $r=p{q_a}$. By H\"older's inequality
		\begin{align}\label{initial_condition_scaling_2}
			\lVert \lvert\bar{U}\rvert^{p-1}b(\tau)\rVert_{L^{q_a}_{rad}}&  \leq \lVert \bar{U}\rVert_{L^{r}_{rad}}^{p-1}\lVert w_0\rVert_{L^{r}_{rad}}e^{-\frac{ \tau d}{2r}}.
		\end{align}
		In conclusion, thanks to Proposition \ref{prop:semigroup},
		we get
		\begin{align}\label{semigroup inequality}
			\lVert \phi(\tau)\rVert_{L^{q_a,r}}\lesssim (M+\lVert w_0\rVert_{L^r_{rad}})\int_{-\infty}^\tau e^{(\lambda_{\bar{\alpha}}+\delta-\frac{1}{p-1})(\tau-s)-\frac{s d}{2r}} ds.
		\end{align}
		According to \eqref{Item instabilty} we can take $\delta$ small enough such that $\lambda_{\bar{\alpha}}+\delta-\frac{1}{p-1} <- \frac{d}{2r}$. Then \eqref{semigroup inequality} implies that the integral in \eqref{Eq:Intergal} converges and
		\begin{align*}
			\lVert \phi(\tau)\rVert_{L^{q_a,r}}\lesssim (M+\lVert w_0\rVert_{L^r_{rad}})e^{-\frac{\tau d}{2r}}.
		\end{align*}
		Consequently
		\begin{align}\label{estimate_1_linear_pde_energy}
			\operatorname{sup}_{t\in (0,1)}\lVert w(t)-h(t)\rVert_{L^r_{rad}}&=\operatorname{sup}_{t\in (0,1)}t^{\frac{d}{2r}}\lVert \phi(\ln t)\rVert_{L^r_{rad}}\notag\\ & \lesssim (M+\lVert w_0\rVert_{L^r_{rad}}).
		\end{align}
		Relation \eqref{estimate_1_linear_pde_energy} and the contraction property of the heat semigroup imply
		\begin{align}\label{estimate_2_linear_pde_energy}
			\operatorname{sup}_{t\in (0,1)}\lVert w(t)\rVert_{L^r_{rad}}&\leq   \operatorname{sup}_{t\in (0,1)}\lVert w(t)-h(t)\rVert_{L^r_{rad}}+  \operatorname{sup}_{t\in (0,1)}\lVert h(t)\rVert_{L^r_{rad}}\notag\\ &\lesssim M+\lVert w_0\rVert_{L^r_{rad}}. 
		\end{align}
		To show \eqref{en_est_lin_1} and \eqref{en_est_lin_2} we now consider $\lVert \phi(\tau)\rVert_{L^{pr}_{rad}}$. By Proposition \ref{prop:regularization L}, setting $\theta=\frac{d(p-1)}{pr}<2$ we get, thanks to \eqref{forcing_scaling_ss}, \eqref{initial_condition_scaling_1}, \eqref{initial_condition_scaling_2}, that
		\begin{align*}
			\lVert \phi(\tau)\rVert_{L^{pr}_{rad}}&\lesssim(M+\lVert w_0\rVert_{L^r_{rad}})\int_{-\infty}^\tau \frac{e^{(\lambda_{\bar{\alpha}}+\delta-\frac{1}{p-1})(\tau-s)-\frac{s d}{2r}} }{(\tau-s)^{\theta/2}}ds. 
		\end{align*}
		Arguing by H\"older's inequality, as in \eqref{holder_equation_singular}, the latter implies
		\begin{align*}
			\lVert \phi(\tau)\rVert_{L^{pr}_{rad}}\lesssim (M+\lVert w_0\rVert_{L^r_{rad}})e^{-\frac{\tau d}{2r}}.
		\end{align*}
		Therefore
		\begin{align}\label{estimate_3_linear_pde_energy}
			\lVert w(t)-h(t)\rVert_{L^{pr}_{rad}}&=t^{\frac{d}{2pr}}\lVert \phi(\ln t)\rVert_{L^{pr}_{rad}}\notag\\ & \lesssim (M+\lVert w_0\rVert_{L^r_{rad}})t^{\frac{d}{2r}\left(\frac{1-p}{p}\right)}.
		\end{align}
		Relation \eqref{estimate_3_linear_pde_energy} and the ultracontractivity property of the heat semigroup imply
		\begin{align}\label{estimate_4_linear_pde_energy}
			\operatorname{sup}_{t\in (0,1)} t^{\frac{d}{2r}\left(\frac{p-1}{p}\right)}\lVert w(t)\rVert_{L^{pr}_{rad}}&\leq   \operatorname{sup}_{t\in (0,1)}t^{\frac{d}{2r}\left(\frac{p-1}{p}\right)}\lVert w(t)-h(t)\rVert_{L^{pr}_{rad}}\notag\\ &+  \operatorname{sup}_{t\in (0,1)}t^{\frac{d}{2r}\left(\frac{p-1}{p}\right)}\lVert h(t)\rVert_{L^{pr}_{rad}}\notag\\ &\lesssim M+\lVert w_0\rVert_{L^r_{rad}}.    
		\end{align}
		The computations above do not use the additional regularity of $w_0$ and $f$ and imply the validity of \eqref{en_est_lin_1} combining \eqref{estimate_2_linear_pde_energy} and \eqref{estimate_4_linear_pde_energy}. In case of the additional regularity, we have
		\begin{align}\label{estimtes_}
			\operatorname{sup}_{t\in [0,1]} \lVert f(t)\rVert_{L^{q_a,r}}\lesssim 1,
		\end{align}
		therefore
		\begin{align*}
			\lVert g(\tau)\rVert_{L^{{q_a},r}}\lesssim e^{\tau\left(1-\frac{d}{2q_a}\right)} 
		\end{align*}
		and $1-\frac{d}{2q_a}>-\frac{d}{2r}$ thanks to \eqref{hp: coefficients 2}.
		For the other term, 
		we can apply H\"older's inequality to obtain
		\begin{align*}
			\lVert \lvert\bar{U}\rvert^{p-1}b(\tau)\rVert_{L^r_{rad}}\leq C(\bar{U})\lVert w_0\rVert_{L^{\Tilde{r}}_{rad}}e^{-\frac{\tau d}{2\Tilde{r}}}
		\end{align*}
		with  $-\frac{d}{2\Tilde{r}}>-\frac{d}{2r}$. By denoting 
		\begin{align}\label{def:delta}
			\vartheta:=\min\{ 1-\frac{d}{2q_a}, -\frac{d}{2\Tilde{r}} \} >-\frac{d}{2r},
		\end{align}
		we can perform the same computations as above to obtain
		\begin{align*}
			\lVert \phi(\tau)\rVert_{L^{pr}_{rad}}+\lVert \phi(\tau)\rVert_{L^{q_a,r}}\lesssim (M+\lVert w_0\rVert_{L^r_{rad}})e^{\vartheta \tau}.
		\end{align*}
		In this case
		\begin{align*}
			\lVert w(t)-h(t)\rVert_{L^r_{rad}}&=t^{\frac{d}{2r}}\lVert \phi(\ln t)\rVert_{L^r_{rad}}\\ & \lesssim t^{\frac{d}{2r}+\vartheta}\rightarrow 0, 
		\end{align*}
		as $t \rightarrow 0$, due to \eqref{def:delta}. Similarly 
		\begin{align*}
			t^{\frac{d}{2r}\left(\frac{p-1}{p}\right)}\lVert w(t)\rVert_{L^{pr}_{rad}}&\leq   t^{\frac{d}{2r}\left(\frac{p-1}{p}\right)}\lVert w(t)-h(t)\rVert_{L^{pr}_{rad}}+  t^{\frac{d}{2r}\left(\frac{p-1}{p}\right)}\lVert h(t)\rVert_{L^{pr}_{rad}}\notag\\ &\lesssim t^{\frac{d}{2r}+\vartheta}\rightarrow 0,
		\end{align*}
		as $t \rightarrow 0$, due to \eqref{def:delta}, and the fact that $w_0\in C^{\infty}_c(\R^d)$. The continuity for positive times follows from the fact that the forcing term $f$ is no more singular for $t>0$.
		This completes the proof of the existence of solution in case of smooth data, the general case follows by approximation.
		
		Concerning the uniqueness, let us assume $w_0,f\equiv 0$. Then any solution $w$ satisfies the mild formula
		\begin{align*}
			w(t)=p\int_0^t P(t-t')[\lvert \bar{u}\rvert^{p-1}w](t') dt'.
		\end{align*}
		By the contraction properties of the heat semigroup and H\"older's inequality, we have
		\begin{align*}
			\lVert w(t)\rVert_{L^{q_a}_{rad}}&\leq p \int_0^t \lVert \lvert \bar{u}(t')\rvert^{p-1}w(t')\rVert_{L^{q_a}_{rad}} dt' \\ &\lesssim_p \int_0^t \lVert \bar{u}(t')\rVert_{L^{r}_{rad}}^{p-1}\lVert w(t')\rVert_{L^r_{rad}} dt'.
		\end{align*}
		Since $\lVert w(t')\rVert_{L^r_{rad}}$ is bounded by the previous existence result and 
		\begin{align*}
			\lVert \bar{u}(t')\rVert_{L^{r}_{rad}}=\frac{1}{t'^{\frac{1}{p-1}-\frac{d}{2r}}}\lVert \bar{U}\rVert_{L^{r}_{rad}},
		\end{align*}
		we obtain
		\begin{align}\label{estimate L^q}
			\lVert w(t)\rVert_{L^{q_a}_{rad}}&\lesssim t^{\frac{d(p-1)}{2r}}.  
		\end{align}
		Recalling that $w(t,x)=\phi\left(\ln t,\frac{x}{\sqrt{t}}\right)$, due to $w_0,f\equiv 0$, we get
		\begin{align*}
			\phi(\tau)=e^{-\frac{1}{p-1}(\tau+s)}S_{\bar{\alpha}}(\tau+s)\phi(-s),
		\end{align*}
		for $\tau,s \in \R$.
		Therefore, thanks to Proposition \ref{prop:semigroup}, estimate \eqref{estimate L^q} and the uniform bound on the $L^r$ norm of $w$, we have that
		\begin{align*}
			\lVert \phi(\tau)\rVert_{L^{q_a,r}}&\lesssim e^{(\lambda_{\bar{\alpha}}+\delta-\frac{1}{p-1})(\tau+{s})}\lVert \phi(-s)\rVert_{L^{q_a,r}}\\ & \lesssim_\tau e^{(\lambda_{\bar{\alpha}}+\delta+\frac{d}{2r}-\frac{1}{p-1}){s}}\rightarrow 0 \quad \text{as} \quad s\rightarrow +\infty,
		\end{align*}
		due to the choice of $\delta$. Thus $\phi\equiv 0$ and consequently also $w$.
	\end{proof}
	Before we formulate the main result of this section, we introduce an auxiliary function space. Under assumptions \eqref{hp: coefficients 2}, for $T'>0$ we define the Banach space
	\begin{align*}
		Z^{T'}:=\{w\in L^{\infty}((0,T'),&L^r_{rad}(\R^d)) ~\lvert \\
		& \sup_{t\in (0,T')}t^{\frac{d}{2r}\left(\frac{p-1}{p}\right)}\lVert w(t)\rVert_{ L^{pr}_{rad}}<\infty,~ \lim_{t\rightarrow 0}t^{\frac{d}{2r}\left(\frac{p-1}{p}\right)}\lVert w(t)\rVert_{ L^{pr}_{rad}}=0\},
	\end{align*}
	equipped with its natural norm
	\begin{align*}
		\| w \|_{Z^{T'}}:=\sup_{t\in (0,T')}\lVert w(t)\rVert_{L^r_{rad}}+ \sup_{t\in (0,T')}t^{\frac{d}{2r}\left(\frac{p-1}{p}\right)}\lVert w(t)\rVert_{ L^{pr}_{rad}}.
	\end{align*}

	\begin{theorem}\label{nonlinear_singular_PDE}
		Assume \eqref{hp: coefficients 2}.  Let $\bar{\alpha}>0$ be such that for the corresponding expander $\bar{U}$ the operator $L_{\bar{\alpha}}: \mathcal{D}(L_{\bar{\alpha}}) \subseteq L^{q_a,r} \rightarrow  L^{q_a,r}$ admits a maximal positive eigenvalue $\la_{\bar{\alpha}}$ that satisfies \eqref{Item instabilty}. Assume further that $w_0\in L^r_\text{rad}(\R^d)$ and $u':(0,1)\times \R^d \rightarrow \R$ is such that it can be written as $u'(t,x)=\frac{1}{t^{\frac{1}{p-1}}}U'\left(\ln t,\frac{|x|}{\sqrt{t}} \right)$, where
		\begin{align*}
			\sup_{\tau \in (-\infty,T]} \lVert U'(\tau,\cdot)\rVert_{L^{1,pr}}<\infty
		\end{align*}
		for some $T<0$. Then, whenever the above displayed quantity is sufficiently small, there is $0<T'<e^T$ such that there exists a unique in $Z^{T'}$ solution $w \in C([0,T'],L^r_{rad}(\R^d))$ to the Cauchy problem \eqref{nonlinear PDE}-\eqref{Def_f(w)}-\eqref{Eq:u_bar}.
		Moreover 
		\begin{align}\label{energy_estimate_nonlin_1}
			\lVert w\rVert_{ L^{\infty}((0,T'),L^r_{rad})}+\operatorname{sup}_{t\in (0,T')}t^{\frac{d}{2r}\left(\frac{p-1}{p}\right)}\lVert w(t)\rVert_{ L^{pr}_{rad}}&\lesssim \lVert w_0\rVert_{L^r_{rad}},
		\end{align}
		\begin{align}\label{energy_estimate_nonlin_2}
			\operatorname{lim}_{t\rightarrow 0}t^{\frac{d}{2r}\left(\frac{p-1}{p}\right)}\lVert w(t)\rVert_{ L^{pr}_{rad}}&=0.    
		\end{align}
	\end{theorem}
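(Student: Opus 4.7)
The natural strategy is a contraction argument in $Z^{T'}$ for a sufficiently small $T'\in(0,e^T)$, treating the linear problem as a black box via Lemma \ref{lem:linear PDE}. Given $v\in Z^{T'}$, I would define $\Phi(v)$ to be the unique solution in $C([0,T'],L^r_{rad})$ furnished by Lemma \ref{lem:linear PDE} of
\begin{equation*}
\partial_t w=\Delta w+p\lvert \bar u\rvert^{p-1}w+f(v),\qquad w(0)=w_0,
\end{equation*}
with $f$ as in \eqref{Def_f(w)}. The goal is to show that, for suitable $\rho>0$ and $T'>0$, the map $\Phi$ sends the closed ball $B_\rho(Z^{T'})$ into itself and is a contraction; the fixed point is the desired $w$, and the bounds \eqref{energy_estimate_nonlin_1}--\eqref{energy_estimate_nonlin_2} are inherited from \eqref{en_est_lin_1}--\eqref{en_est_lin_2}.

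The core technical step is to verify, for $v\in B_\rho(Z^{T'})$, the hypotheses of Lemma \ref{lem:linear PDE} on $f(v)$, i.e., finiteness and vanishing as $t\to 0$ of
\begin{equation*}
M(v):=\sup_{t\in(0,T')}\bigl(\lVert f(v)(t)\rVert_{L^r_{rad}}\,t+\lVert f(v)(t)\rVert_{L^{q_a}_{rad}}\,t^{1+\frac{d}{2r}-\frac{d}{2q_a}}\bigr).
\end{equation*}
To this end I would split, with $g(\xi):=\lvert\xi\rvert^{p-1}\xi$,
\begin{equation*}
f(v)=\bigl[g(\bar u+u')-g(\bar u+u'-v)-p\lvert \bar u+u'\rvert^{p-1}v\bigr]+p\bigl(\lvert \bar u+u'\rvert^{p-1}-\lvert \bar u\rvert^{p-1}\bigr)v.
\end{equation*}
Applying Lemma \ref{lemma_nonlinearity} (with $x=\bar u+u'$, $y=0$, $z=-v$) controls the first bracket pointwise by $\lvert v\rvert^p$ if $p\le 2$ and by $\lvert \bar u+u'\rvert^{p-2}\lvert v\rvert^2+\lvert v\rvert^p$ if $p>2$; the second term is bounded pointwise by $\lvert u'\rvert^{p-1}\lvert v\rvert$ if $p\le 2$ and by $\lvert u'\rvert(\lvert\bar u\rvert^{p-2}+\lvert u'\rvert^{p-2})\lvert v\rvert$ if $p>2$. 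Using the self-similar identities $\lVert\bar u(t)\rVert_{L^\gamma_{rad}}=t^{\frac{d}{2\gamma}-\frac{1}{p-1}}\lVert\bar U\rVert_{L^\gamma_{rad}}$ together with the analogous bound for $u'$ (whose smallness is controlled by the hypothesis on $U'$), the embedding from Theorem \ref{thm:existence_ancient_solutions}, and $v\in Z^{T'}$ interpolating between $L^r$ and $L^{pr}$, H\"older's inequality combined with the critical identity $r=pq_a$ converts these pointwise bounds into
\begin{equation*}
\lVert f(v)(t)\rVert_{L^{q_a}_{rad}}\lesssim (\rho^p+\varepsilon^{p-1}\rho)\,t^{-1-\frac{d}{2r}+\frac{d}{2q_a}},\qquad \lVert f(v)(t)\rVert_{L^r_{rad}}\lesssim (\rho^p+\varepsilon^{p-1}\rho)\,t^{-1},
\end{equation*}
so $M(v)\lesssim\rho^p+\varepsilon^{p-1}\rho$, with each factor vanishing as $t\to 0$ (the $|v|^p$ term by definition of $Z^{T'}$, the remaining terms by the smallness of $\varepsilon$ and positivity of the exponents that remain after compensation).

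The conclusion of Lemma \ref{lem:linear PDE} then gives $\lVert\Phi(v)\rVert_{Z^{T'}}\lesssim M(v)+\lVert w_0\rVert_{L^r_{rad}}\lesssim \rho^p+\varepsilon^{p-1}\rho+\lVert w_0\rVert_{L^r_{rad}}$; choosing first $\rho$ comparable to $\lVert w_0\rVert_{L^r_{rad}}$ and then $\varepsilon$ small enough (shrinking $T'$ if necessary) ensures $\Phi$ preserves $B_\rho(Z^{T'})$. The contraction estimate follows by repeating the argument with \eqref{estimate contraction} applied to $f(v_1)-f(v_2)$, which produces an extra factor $\lvert v_1-v_2\rvert$ and yields
\begin{equation*}
\lVert\Phi(v_1)-\Phi(v_2)\rVert_{Z^{T'}}\lesssim(\rho^{p-1}+\varepsilon^{p-1})\lVert v_1-v_2\rVert_{Z^{T'}}<\tfrac12\lVert v_1-v_2\rVert_{Z^{T'}}.
\end{equation*}
The main obstacle, in my view, is matching the critical time scaling of $f(v)$ to the singular, time-dependent potential $p\lvert\bar u\rvert^{p-1}\sim t^{-1}\lvert\cdot\rvert^{-2}$ allowed by Lemma \ref{lem:linear PDE}: this is exactly where the eigenvalue condition \eqref{Item instabilty} and the algebraic identity $r=pq_a$ are indispensable, since together they let the bound $\lVert S_{\bar\alpha}(\tau)\rVert\lesssim e^{(\lambda_{\bar\alpha}+\delta)\tau}$ from Proposition \ref{prop:semigroup} dominate the $e^{-s d/(2r)}$ forcing growth in the representation \eqref{Eq:Intergal}, while the $Z^{T'}$ norm is precisely scaling-invariant for the resulting nonlinear flow.
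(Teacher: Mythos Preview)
Your approach---contraction in $Z^{T'}$ using Lemma \ref{lem:linear PDE} as the linear solution map and Lemma \ref{lemma_nonlinearity} to bound $f$---is exactly the paper's, which applies Lemma \ref{lemma_nonlinearity} directly with $x=\bar u$, $y=u'$, $z=u'-w$ rather than your equivalent splitting based on $x=\bar u+u'$. One point to make precise: the naive H\"older estimate on $\lvert u'\rvert^{p-1}\lvert v\rvert$ in $L^{q_a}$ gives exactly $\varepsilon^{p-1}\rho$ with no decay in $t$, so the vanishing-at-zero hypothesis of Lemma \ref{lem:linear PDE} for this cross term requires the refined interpolation \eqref{ineq_going_to_0_1}--\eqref{ineq_going_to_0_2} (placing part of $v$ in $L^{pr}$), which is presumably what you mean by ``$v\in Z^{T'}$ interpolating between $L^r$ and $L^{pr}$''.
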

	\begin{proof}
		We are looking for a solution of \eqref{nonlinear PDE} of the form
		\begin{align*}
			w(t)=\mathcal{T}[w_0,0](t)+\mathcal{T}[0,f(w)](t),
		\end{align*}
		$\mathcal{T}(g,f)$ being the solution map of \eqref{linear pde} with initial condition $g\in L^r_{rad}(\R^d)$ and singular forcing term $f$ as given in Lemma \ref{lem:linear PDE}. Let us start with some preliminary estimates needed later on. By H\"older inequality and definitions of the appearing terms, for $p>1+\frac{2}{d}$ we have the following inequalities
		\begin{align}\label{ineq_1_nonlin_PDE_sing}
			\lVert \lvert u'(t)\rvert^{p-1}\lvert w(t)\rvert\rVert_{L^{q_a}}\leq \lVert u'(t)\rVert^{p-1}_{L^r} \lVert w(t)\rVert_{L^r} \leq \frac{\eps^{p-1}}{t^{1-\frac{d(p-1)}{2r}}}\lVert w(t)\rVert_{L^r},
		\end{align}
		\begin{align}\label{ineq_2_nonlin_PDE_sing}
			\lVert \lvert u'(t)\rvert^{p-1}\lvert w(t)\rvert\rVert_{L^r}\leq \lVert u'(t)\rVert^{p-1}_{L^{pr}} \lVert w(t)\rVert_{L^{pr}} \leq \frac{\eps^{p-1}}{t}t^{\frac{d(p-1)}{2pr}}\lVert w(t)\rVert_{L^{pr}},
		\end{align}
		\begin{align}\label{ineq_11_nonlin_PDE_sing}
			\lVert \lvert u'(t)\rvert\lvert w(t)\rvert^{p-1}\rVert_{L^{q_a}}\leq \lVert u'(t)\rVert_{L^r} \lVert w(t)\rVert^{p-1}_{L^r} \leq \frac{\eps}{t^{\frac{1}{p-1}-\frac{d}{2r}}}\lVert w(t)\rVert_{L^r}^{p-1},
		\end{align}
		\begin{align}\label{ineq_12_nonlin_PDE_sing}
			\lVert \lvert u'(t)\rvert\lvert w(t)\rvert^{p-1}\rVert_{L^r}\leq \lVert u'(t)\rVert_{L^{pr}} \lVert w(t)\rVert^{p-1}_{L^{pr}} \leq \frac{\eps}{t^{\frac{1}{p-1}+\frac{d(p-2)}{2r}}}t^{\frac{d(p-1)^2}{2pr}}\lVert w(t)\rVert_{L^{pr}}^{p-1}.
		\end{align}
		If, moreover, $p>2$, then
		\begin{equation}\label{ineq_3_nonlin_PDE_sing}
			\lVert \lvert u'(t)\rvert^{p-2}\lvert w(t)\rvert^2\rVert_{L^{q_a}}\leq \lVert u'(t)\rVert^{p-2}_{L^r} \lVert w(t)\rVert^2_{L^r} \leq \frac{\eps^{p-2}}{t^{1-\frac{1}{p-1}-\frac{d(p-2)}{2r}}}\lVert w(t)\rVert^2_{L^r},
		\end{equation}
		
		\begin{equation}\label{ineq_4_nonlin_PDE_sing}
			\lVert \lvert u'(t)\rvert^{p-2}\lvert w(t)\rvert^2\rVert_{L^r}\leq \lVert u'(t)\rVert^{p-2}_{L^{pr}} \lVert w(t)\rVert^2_{L^{pr}} \leq \frac{\eps^{p-2}}{t^{1+\frac{d}{2r}-\frac{1}{p-1}}}t^{\frac{d(p-1)}{pr}}\lVert w(t)\rVert_{L^{pr}}^2,
		\end{equation}
		\begin{align}\label{ineq_5_nonlin_PDE_sing}
			\lVert \lvert \bar{u}(t)\rvert^{p-2}\lvert w(t)\rvert^2\rVert_{L^{q_a}}\leq \lVert \bar{u}(t)\rVert^{p-2}_{L^r} \lVert w(t)\rVert^2_{L^r} \leq \frac{\lVert \bar{U}\rVert_{L^r}^{p-2}}{t^{1-\frac{1}{p-1}-\frac{d(p-2)}{2r}}}\lVert w(t)\rVert^2_{L^r},
		\end{align}
		\begin{align}\label{ineq_6_nonlin_PDE_sing}
			\lVert \lvert \bar{u}(t)\rvert^{p-2}\lvert w(t)\rvert^2\rVert_{L^r}\leq \lVert \bar{u}(t)\rVert^{p-2}_{L^{pr}} \lVert w(t)\rVert^2_{L^{pr}} \leq \frac{\lVert \bar{U}\rVert_{L^{pr}}^{p-2}}{t^{1+\frac{d}{2r}-\frac{1}{p-1}}}t^{\frac{d(p-1)}{pr}}\lVert w(t)\rVert_{L^{pr}}^2,
		\end{align}
		\begin{align}\label{ineq_7_nonlin_PDE_sing}
			\lVert \lvert \bar{u}(t)\rvert^{p-2}\lvert u'(t)\rvert \lvert w(t)\rvert\rVert_{L^{q_a}}\leq \lVert \bar{u}(t)\rVert^{p-2}_{L^r}\lVert u'(t)\rVert_{L^r} \lVert w(t)\rVert_{L^r} \leq \frac{\eps\lVert \bar{U}\rVert_{L^{r}}^{p-2}}{t^{1-\frac{d(p-1)}{2r}}}\lVert w(t)\rVert_{L^r},
		\end{align}
		\begin{align}\label{ineq_8_nonlin_PDE_sing}
			\lVert \lvert \bar{u}(t)\rvert^{p-2}\lvert u'(t)\rvert \lvert w(t)\rvert\rVert_{L^r}\leq \lVert \bar{u}(t)\rVert^{p-2}_{L^{pr}}\lVert u'(t)\rVert_{L^{pr}} \lVert w(t)\rVert_{L^{pr}} \leq  \frac{\eps\lVert \bar{U}\rVert_{L^{pr}}^{p-2}}{t}t^{\frac{d(p-1)}{2pr}}\lVert w(t)\rVert_{L^{pr}}.
		\end{align}
		Lastly, since ${q_a}>\frac{(p-1)d}{2p}$ for $\theta$ small enough we have also ${q_a}>\frac{(p-1+\theta)d}{2(p+\theta)}$. If moreover such $\theta$ satisfies also $\theta<p(p-1)$ we can estimate \eqref{ineq_1_nonlin_PDE_sing}, \eqref{ineq_7_nonlin_PDE_sing} in the following way
		\begin{align}\label{ineq_going_to_0_1}
			\lVert \lvert u'(t)\rvert^{p-1}\lvert w(t)\rvert\rVert_{L^{q_a}}&\leq \lVert u'(t)\rVert^{p-1}_{L^{\frac{{q_a}(p-1)(p+\theta)}{p-1+\theta}}} \lVert w(t)\rVert_{L^{(p+\theta){q_a}}}\notag\\ & \leq \frac{\eps^{p-1}}{t^{1-\frac{d(p-1)}{2p{q_a}}}}\lVert w(t)\rVert_{L^r}^{1-\frac{\theta p}{(p+\theta)(p-1)}}\left(t^{\frac{d(p-1)}{2pr}}\lVert w(t)\rVert_{L^{pr}}\right)^{\frac{\theta p}{(p+\theta)(p-1)}}    
		\end{align}
		\begin{multline}\label{ineq_going_to_0_2}
			\lVert \lvert \bar{u}(t)\rvert^{p-2}\lvert u'(t)\rvert \lvert w(t)\rvert\rVert_{L^{q_a}}\leq \lVert \bar{u}(t)\rVert^{p-2}_{L^{\frac{{q_a}(p-1)(p+\theta)}{p-1+\theta}}}\lVert u'(t)\rVert_{L^{\frac{{q_a}(p-1)(p+\theta)}{p-1+\theta}}} \lVert w(t)\rVert_{L^{(p+\theta){q_a}}} \\  \leq \frac{\eps\lVert \bar{U}\rVert_{L^{\frac{{q_a}(p-1)(p+\theta)}{p-1+\theta}}}^{p-1} }{t^{1-\frac{d(p-1)}{2p{q_a}}}}\lVert w(t)\rVert_{L^r}^{1-\frac{\theta p}{(p+\theta)(p-1)}}\left(t^{\frac{d(p-1)}{2pr}}\lVert w(t)\rVert_{L^{pr}}\right)^{\frac{\theta p}{(p+\theta)(p-1)}}.    
		\end{multline}  
		Owing to our choice of $\theta$, we have that $\frac{{q_a}(p-1)(p+\theta)}{p-1+\theta}>q_c$ and $\lVert \bar{U}\rVert_{L_{rad}^{\frac{{q_a}(p-1)(p+\theta)}{p-1+\theta}}}<\infty$.
		After these preliminary computations, we can run a fixed point argument. Let $M=\lVert \mathcal{T}[w_0,0]\rVert_{Z^{e^{T}}}$, we denote by $B_{2M}\subseteq Z^{T'}$ the closed ball in $Z^{T'}$ for $T'>0$ with center $0$ and radius $2M$. We are looking for $\eps>0$ and $T'>0$ small enough such that 
		\begin{align*}
			\Gamma(w)=\mathcal{T}[w_0,0]+\mathcal{T}[0,f(w)]
		\end{align*}
		is a contraction on $B_{2M}$. First we need to show that $\Gamma$ maps $B_{2M}$ into itself. According to Lemma \ref{lemma_nonlinearity}, we have to split our analysis into cases $p\in (1+\frac{2}{d},2]$ and $p>2$. We start with the first case. Due to Lemma \ref{lemma_nonlinearity} with
		$x=\bar{u},\ y=u',\ z=u'-w$, we have
		\begin{align*}
			\lVert f(w)\rVert_{L^{q_a}_{rad}}&\lesssim_p \lVert  w\rVert_{L^r_{rad}}^p+\lVert \lvert u'\rvert^{p-1}\lvert w\rvert\rVert_{L^{q_a}_{rad}},\\  \lVert f(w)\rVert_{L^r_{rad}}&\lesssim_p \lVert w\rVert_{L^{rp}_{rad}}^p+\lVert \lvert u'\rvert^{p-1}\lvert w\rvert\rVert_{L^r_{rad}}.
		\end{align*}
		Therefore, by exploiting relations \eqref{ineq_1_nonlin_PDE_sing}, \eqref{ineq_2_nonlin_PDE_sing} and \eqref{ineq_going_to_0_1}, we obtain thanks to the fact that $w\in B_{2M}$
		\begin{align*}
			t^{1+\frac{d}{2r}-\frac{d}{2{q_a}}}\lVert f(w(t))\rVert_{L^{q_a}_{rad}}&\lesssim \eps^{p-1} M+\left(T'\right)^{1-\frac{d(p-1)}{2r}}M^{p},\\
			t\lVert f(w(t))\rVert_{L^r_{rad}}& \lesssim \eps^{p-1}M+\left(T'\right)^{1-\frac{d(p-1)}{2r}}M^p,\\
			\operatorname{lim}_{t\rightarrow 0} t^{1+\frac{d}{2r}-\frac{d}{2{q_a}}}\lVert f(w(t))\rVert_{L^{q_a}_{rad}}&=0,\quad \operatorname{lim}_{t\rightarrow 0} t\lVert f(w(t))\rVert_{L^r_{rad}}=0.
		\end{align*}
		Therefore, we can apply Lemma \ref{lem:linear PDE} to obtain
		\begin{align*}
			\lVert  \Gamma(w)\rVert_{Z^{T'}}&\leq M+C\left(\eps^{p-1} M+\left(T'\right)^{1-\frac{d(p-1)}{2r}}M^{p}\right)\\ & \leq 2M,
		\end{align*}
		whenever $\eps$ and $T'$ are small enough.
		Let us move to the case $p>2$, which is analogous, but more involved due to the structure of Lemma \ref{lemma_nonlinearity}. Applying this lemma for
		$x=\bar{u},\ y=u',\ z=u'-w$, we have
		\begin{align*}
			\lVert f(w)\rVert_{L^{q_a}_{rad}}\lesssim_p \, & \lVert w\rVert_{L^r_{rad}}^p+\lVert \lvert u'\rvert^{p-1}\lvert w\rvert\rVert_{L^{q_a}_{rad}} +\lVert\lvert u'\rvert^{p-2}\lvert w\rvert^2\rVert_{L^{q_a}_{rad}}+\lVert\lvert u'\rvert \lvert w\rvert^{p-1}\rVert_{L^{q_a}_{rad}},\\ &+\lVert\lvert \bar{u}\rvert^{p-2}\lvert u'\rvert\lvert w\rvert \rVert_{L^{q_a}_{rad}}+\lVert\lvert \bar{u}\rvert^{p-2}\lvert w\rvert^2 \rVert_{L^{q_a}_{rad}},\\  \lVert f(w)\rVert_{L^r_{rad}}\lesssim_p \, & \lVert w\rVert_{L^{pr}_{rad}}^p+\lVert \lvert u'\rvert^{p-1}\lvert w\rvert\rVert_{L^r_{rad}} +\lVert\lvert u'\rvert^{p-2}\lvert w\rvert^2\rVert_{L^r_{rad}}+\lVert\lvert u'\rvert \lvert w\rvert^{p-1}\rVert_{L^r_{rad}},\\ &+\lVert\lvert \bar{u}\rvert^{p-2}\lvert u'\rvert\lvert w\rvert \rVert_{L^r_{rad}}+\lVert\lvert \bar{u}\rvert^{p-2}\lvert w\rvert^2 \rVert_{L^r_{rad}}.
		\end{align*}
		Therefore, by exploiting relations \eqref{ineq_1_nonlin_PDE_sing}, \eqref{ineq_2_nonlin_PDE_sing}, \eqref{ineq_3_nonlin_PDE_sing}, \eqref{ineq_4_nonlin_PDE_sing}, \eqref{ineq_5_nonlin_PDE_sing}, \eqref{ineq_6_nonlin_PDE_sing}, \eqref{ineq_7_nonlin_PDE_sing}, \eqref{ineq_8_nonlin_PDE_sing}, \eqref{ineq_going_to_0_1}, \eqref{ineq_going_to_0_2}, and thanks to the fact that $w\in B_{2M}$, we obtain
		\begin{align*}
			t^{1+\frac{d}{2r}-\frac{d}{2{q_a}}}\lVert f(w(t))\rVert_{L^{q_a}_{rad}}&\lesssim\eps^{p-1} M+\left(T'\right)^{1-\frac{d(p-1)}{2r}}M^{p}+\eps^{p-2}\left(T'\right)^{\frac{1}{p-1}-\frac{d}{2r}}M^2\\ & \left. +\eps \left(T'\right)^{(p-2)\left(\frac{1}{p-1}-\frac{d}{2pq}\right)}M^{p-1}+\eps\lVert \bar{U}\rVert_{L^{r}}^{p-2}M\right.\\ &  +\lVert \bar{U}\rVert_{L^r_{rad}}^{p-2}\left(T'\right)^{\frac{1}{p-1}-\frac{d}{2r}}M^2 ,\\
			t\lVert f(w(t))\rVert_{L^r_{rad}}& \lesssim \eps^{p-1}M+\left(T'\right)^{1-\frac{d(p-1)}{2r}}M^p+ \eps^{p-2}\left(T'\right)^{\frac{1}{p-1}-\frac{d}{2r}}M^2\\ & \left. +\eps \left(T'\right)^{(p-2)\left(\frac{1}{p-1}-\frac{d}{2pq}\right)}M^{p-1}+\eps\lVert \bar{U}\rVert_{L^{r}}^{p-2}M\right.\\ &  +\lVert \bar{U}\rVert_{L^r_{rad}}^{p-2}\left(T'\right)^{\frac{1}{p-1}-\frac{d}{2r}}M^2,
		\end{align*}
		and
		\begin{align*}
			\operatorname{lim}_{t\rightarrow 0} t^{1+\frac{d}{2r}-\frac{d}{2{q_a}}}\lVert f(w(t))\rVert_{L^{q_a}_{rad}}&=0,\quad \operatorname{lim}_{t\rightarrow 0} t\lVert f(w(t))\rVert_{L^r_{rad}}=0.
		\end{align*}
		Consequently, we can apply Lemma \ref{lem:linear PDE} to obtain
		\begin{align*}
			\lVert  \Gamma(w)\rVert_{Z^{T'}}&\leq M+C\left(\eps^{p-1}M+\left(T'\right)^{1-\frac{d(p-1)}{2r}}M^p+ \eps^{p-2}\left(T'\right)^{\frac{1}{p-1}-\frac{d}{2r}}M^2\right.\\ & \left.\quad \quad\quad\quad\quad +\eps \left(T'\right)^{(p-2)\left(\frac{1}{p-1}-\frac{d}{2pq}\right)}M^{p-1}+\eps\lVert \bar{U}\rVert_{L^{r}}^{p-2}M\right.\\ & \left.\quad\quad\quad\quad\quad +\lVert \bar{U}\rVert_{L^r_{rad}}^{p-2}\left(T'\right)^{\frac{1}{p-1}-\frac{d}{2r}}M^2 \right)\\ & \leq 2M,
		\end{align*}
		for $\eps$ and $T'$ small enough.
		
		Now we show that $\Gamma$ is a contraction in $B_{2M}$, possibly reducing $\eps$ and $T'$. For $w_1,\ w_2\in B_{2M}$, we observe that
		\begin{align*}
			\Gamma(w_1)-\Gamma(w_2)=\mathcal{T}[0,f(w_1)-f(w_2)].
		\end{align*} We start again with the case $p\in (1+\frac{2}{d},2]$. Thanks to Lemma \ref{lemma_nonlinearity} with $x=\bar{u},\ y=u'-w_1,\ z=u'-w_2$, we get
		\begin{align*}
			\lVert f(w_1)-f(w_2)\rVert_{L^{q_a}_{rad}}\leq \, &  \lVert \lvert u' \rvert^{p-1}\lvert w_1-w_2\rvert \rVert_{L^{q_a}_{rad}}\\ &+ \left(\lVert w_1 \rVert^{p-1}_{L^r_{rad}}+\lVert w_2 \rVert^{p-1}_{L^r_{rad}}\right)\lVert w_1-w_2\rVert_{L^r_{rad}},\\
			\lVert f(w_1)-f(w_2)\rVert_{L^r_{rad}}\leq \, &  \lVert \lvert u' \rvert^{p-1}\lvert w_1-w_2\rvert \rVert_{L^r_{rad}}\\ &+ \left(\lVert w_1 \rVert^{p-1}_{L^{pr}_{rad}}+\lVert w_2 \rVert^{p-1}_{L^{pr}_{rad}}\right)\lVert w_1-w_2\rVert_{L^{pr}_{rad}}.
		\end{align*}
		Therefore, relations \eqref{ineq_1_nonlin_PDE_sing}, \eqref{ineq_2_nonlin_PDE_sing} and \eqref{ineq_going_to_0_1} imply, thanks to the fact that $w_1,w_2\in B_{2M}$
		\begin{align*}
			t^{1+\frac{d}{2r}-\frac{d}{2{q_a}}}\lVert f(w_1(t))-f(w_2(t))\rVert_{L^{q_a}_{rad}}&\lesssim\eps^{p-1} M+\left(T'\right)^{1-\frac{d(p-1)}{2r}}M^{p},\\
			t\lVert f(w_1(t))-f(w_2(t))\rVert_{L^r_{rad}}& \lesssim\eps^{p-1}M+\left(T'\right)^{1-\frac{d(p-1)}{2r}}M^p,\\
			\operatorname{lim}_{t\rightarrow 0} t^{1+\frac{d}{2r}-\frac{d}{2{q_a}}}\lVert f(w_1(t))-f(w_2(t))\rVert_{L^{q_a}_{rad}}&=0,\\ \operatorname{lim}_{t\rightarrow 0} t\lVert f(w_1(t))-f(w_2(t))\rVert_{L^r_{rad}}&=0.
		\end{align*}
		We can apply Lemma \ref{lem:linear PDE} to obtain
		\begin{align*}
			\lVert  \Gamma(w_1)-\Gamma(w_2)\rVert_{Z^{T'}}&\lesssim\left(\eps^{p-1}+\left(T'\right)^{1-\frac{d(p-1)}{2r}}M^{p-1}\right)\lVert w_1-w_2\rVert_{Z^{T'}}\\ & \leq \frac{1}{2}\lVert w_1-w_2\rVert_{Z^{T'}}
		\end{align*}
		for $\eps$ and $T'$ small enough. In case $p>2$, we get, thanks to Lemma \ref{lemma_nonlinearity} with $x=\bar{u},\ y=u'-w_1$, and $z=u'-w_2$, that
		\begin{align*}
			\lVert f(w_1)-f(w_2)\rVert_{L^{q_a}_{rad}}\leq\,&   \lVert \lvert u' \rvert^{p-1}\lvert w_1-w_2\rvert \rVert_{L^{q_a}_{rad}}\\ &+ \left(\lVert w_1 \rVert^{p-1}_{L^r_{rad}}+\lVert w_2 \rVert^{p-1}_{L^r_{rad}}\right)\lVert w_1-w_2\rVert_{L^r_{rad}}\\ & +\lVert \lvert \bar{u}\rvert^{p-2}\lvert u'\rvert \lvert w_1-w_2\rvert \rVert_{L^{q_a}_{rad}}\\ & +\lVert \lvert u'\rvert\left(\lvert w_1\rvert^{p-2}+\lvert w_2\rvert^{p-2}\right) \lvert w_1-w_2\rvert \rVert_{L^{q_a}_{rad}}\\ & +\lVert \lvert \bar{u}\rvert^{p-2}\left(\lvert w_1\rvert+\lvert w_2\rvert\right) \lvert w_1-w_2\rvert \rVert_{L^{q_a}_{rad}}\\&+\lVert \lvert u'\rvert^{p-2}\left(\lvert w_1\rvert+\lvert w_2\rvert\right) \lvert w_1-w_2\rvert \rVert_{L^{q_a}_{rad}},\\
			\lVert f(w_1)-f(w_2)\rVert_{L^r_{rad}}\leq\, &  \lVert \lvert u' \rvert^{p-1}\lvert w_1-w_2\rvert \rVert_{L^r_{rad}}\\ &+ \left(\lVert w_1 \rVert^{p-1}_{L^{pr}_{rad}}+\lVert w_2 \rVert^{p-1}_{L^{pr}_{rad}}\right)\lVert w_1-w_2\rVert_{L^{pr}_{rad}}\\ & +\lVert \lvert \bar{u}\rvert^{p-2}\lvert u'\rvert \lvert w_1-w_2\rvert \rVert_{L^r_{rad}}\\ & +\lVert \lvert u'\rvert\left(\lvert w_1\rvert^{p-2}+\lvert w_2\rvert^{p-2}\right) \lvert w_1-w_2\rvert \rVert_{L^r_{rad}}\\ & +\lVert \lvert \bar{u}\rvert^{p-2}\left(\lvert w_1\rvert+\lvert w_2\rvert\right) \lvert w_1-w_2\rvert \rVert_{L^r_{rad}}\\&+\lVert \lvert u'\rvert^{p-2}\left(\lvert w_1\rvert+\lvert w_2\rvert\right) \lvert w_1-w_2\rvert \rVert_{L^r_{rad}}.
		\end{align*}
		Therefore, by exploiting relations \eqref{ineq_1_nonlin_PDE_sing}, \eqref{ineq_2_nonlin_PDE_sing}, and \eqref{ineq_3_nonlin_PDE_sing}-\eqref{ineq_going_to_0_2}, we get, thanks to the fact that $w_1,w_2\in B_{2M}$, that
		\begin{align*}
			t^{1+\frac{d}{2r}-\frac{d}{2{q_a}}}\lVert f(w_1(t))-f(w_2(t))\rVert_{L^{q_a}_{rad}}\lesssim\, &\eps^{p-1} M+\left(T'\right)^{1-\frac{d(p-1)}{2r}}M^{p}\\ &+\eps^{p-2}\left(T'\right)^{\frac{1}{p-1}-\frac{d}{2r}}M^2\\ &     +\eps \left(T'\right)^{(p-2)\left(\frac{1}{p-1}-\frac{d}{2pq}\right)}M^{p-1}\\ &+\eps\lVert \bar{U}\rVert_{L^{r}}^{p-2}M  +\lVert \bar{U}\rVert_{L^r_{rad}}^{p-2}\left(T'\right)^{\frac{1}{p-1}-\frac{d}{2r}}M^2 ,\\
			t\lVert f(w_1(t))-f(w_2(t))\rVert_{L^r_{rad}} \lesssim \, &\eps^{p-1}M+\left(T'\right)^{1-\frac{d(p-1)}{2r}}M^p\\ &+ \eps^{p-2}\left(T'\right)^{\frac{1}{p-1}-\frac{d}{2r}}M^2\\ & +\eps \left(T'\right)^{(p-2)\left(\frac{1}{p-1}-\frac{d}{2pq}\right)}M^{p-1}\\ &+\eps\lVert \bar{U}\rVert_{L^{r}}^{p-2}M  +\lVert \bar{U}\rVert_{L^r_{rad}}^{p-2}\left(T'\right)^{\frac{1}{p-1}-\frac{d}{2r}}M^2 ,\end{align*}
		\begin{align*}
			&\operatorname{lim}_{t\rightarrow 0} t^{1+\frac{d}{2r}-\frac{d}{2{q_a}}}\lVert f(w_1(t))-f(w_2(t))\rVert_{L^{q_a}}=0,\\  &\operatorname{lim}_{t\rightarrow 0} t\lVert f(w_1(t))-f(w_2(t))\rVert_{L^r_{rad}}=0.
		\end{align*}
		We can then apply Lemma \ref{lem:linear PDE} to conclude that
		\begin{align*}
			\lVert  \Gamma(w_1)-\Gamma(w_2)\rVert_{Z^{T'}}&\lesssim \left(\eps^{p-1}+\left(T'\right)^{1-\frac{d(p-1)}{2r}}M^{p-1}+ \eps^{p-2}\left(T'\right)^{\frac{1}{p-1}-\frac{d}{2r}}M\right.\\ & \left.\quad\ +\eps \left(T'\right)^{(p-2)\left(\frac{1}{p-1}-\frac{d}{2pq}\right)}M^{p-2}+\eps\lVert \bar{U}\rVert_{L^{r}}^{p-2}\right.\\ & \left.\quad\ +\lVert \bar{U}\rVert_{L^r_{rad}}^{p-2}\left(T'\right)^{\frac{1}{p-1}-\frac{d}{2r}}M \right)\lVert w_1-w_2\rVert_{Z^{T'}}\\ & \leq \frac{1}{2}\lVert w_1-w_2\rVert_{Z^{T'}},
		\end{align*}
		for $\eps$ and $T'$ small enough. This completes the proof.
	\end{proof}
	\begin{remark}\label{quantification_T'}
		By choosing $\lVert w_0\rVert_{L^r_{rad}}$ small enough, we can, in fact, take $T'=e^{T}$.
	\end{remark}
	\section{Localization in $L^q$: Proof of Theorem \ref{Thm:main}}\label{sec:localization}
	\noindent Let $1\leq q<q_c.$ Let $\epsilon>0$ be such that
	\begin{align*}
		\frac{d(p-1)}{2q}-p<\epsilon<\frac{d(p-1)}{2q}-1.
	\end{align*}
	Then set $r=(p+\epsilon)q$. By Theorem \ref{Prop:Spectral_properties_L} there is $\bar{\alpha}>0$ such that for the corresponding expander $\bar{U}$ the operator  $L_{\bar{\alpha}}:\mathcal{D}(L_{\bar{\alpha}}) \subseteq L^{1,pr} \rightarrow L^{1,pr}$ admits a maximal positive eigenvalue $\la_{\bar{\alpha}}$ for which
	\begin{equation*}
		{\lambda}_{\bar{\alpha}}<\frac{1}{p-1}-\frac{d}{2r}.
	\end{equation*}
	Now, fix  $\bar{R}>0$ and define $u_0:\R^d \rightarrow \R$ by
	\begin{equation*}
		u_0(x):= \tilde{u}_0(x) \mathbf{1}_{[0,\bar{R}]}(|x|),
	\end{equation*} 
	where $\tilde{u}_0$ is given in \eqref{Eq:u_0}. Define $w_0:=\tilde{u}_0-u_0$. By setting $q_a=\frac{r}{p}$, $\hat{q}=1$, and $\hat{r}=pr$, we can now invoke Theorems \ref{thm:existence_ancient_solutions} and \ref{nonlinear_singular_PDE} to obtain 
	\begin{align*}
		u_1(t,x)&=\frac{1}{t^{\frac{1}{p-1}}}\bar{U}\left(\frac{|x|}{\sqrt{t}}\right) - w_1(t,x),\\ u_{2}(t,x)&=\frac{1}{t^{\frac{1}{p-1}}}\bar{U}\left(\frac{|x|}{\sqrt{t}}\right)+\frac{1}{t^{\frac{1}{p-1}}}\psi\left(\ln t,\frac{|x|}{\sqrt{t}}\right) - w_2(t,x),
	\end{align*}
	which are weak solutions of \eqref{PDE} on $(0,T')$ for some $T'>0$. In particular, one can easily check that $u_1, u_2\in L^p_{{loc}}((0,T')\times \R^d)$. We claim that they are, in fact, two different mild $L^q$-solutions on $[0,T')$ with the same initial datum $u_0$. By construction, $u_0$ is compactly supported, and therefore in $L^q(\R^d)$. It remains to show that $u_1, u_2\in C([0,T'),L^q(\R^d))$ and $u_1\neq u_2.$ We show the first property in the form of a lemma.
	
	\begin{lemma}\label{Continuity_uniform_bound}
		$u_1, u_2\in C([0,T'),L^q(\R^d))$ for each $1\leq q<q_c$.    
	\end{lemma}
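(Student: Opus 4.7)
The plan is to prove $u_i \in C([0, T'), L^q(\R^d))$ for $i \in \{1,2\}$ by establishing the limit $u_i(t) \to u_0$ in $L^q$ as $t \to 0^+$; continuity on $(0,T')$ follows from standard parabolic regularity, since for positive times all of $\bar u$, $u'$, $w_i$ are well-behaved. The natural decomposition is
\[
u_i(t) - u_0 = \bigl[\bar u(t) - \tilde u_0\bigr] + u_i'(t) - \bigl[w_i(t) - w_0\bigr],
\]
where $u_1' \equiv 0$, $u_2' = u'$, and we used $u_0 = \tilde u_0 - w_0$.

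First I would handle the self-similar remainder. Set $g(\rho) := \bar U(\rho) - \ell(\bar\alpha)\rho^{-2/(p-1)}$, so that $\bar u(t,x) - \tilde u_0(x) = t^{-1/(p-1)} g(|x|/\sqrt t)$. A change of variables gives $\|\bar u(t) - \tilde u_0\|_{L^q} = t^{d/(2q) - 1/(p-1)} \|g\|_{L^q(\R^d)}$, with positive exponent because $q < q_c$. One then checks $g \in L^q$: near $0$ the singular part $-\ell\rho^{-2/(p-1)}$ is $L^q$-integrable exactly when $q < q_c$; at infinity, inserting $\bar U = \ell\rho^{-2/(p-1)} + h$ into \eqref{Eq:Expander_eq_radial} and balancing the leading orders yields $h = O(\rho^{-2/(p-1) - 2})$, which suffices for $L^q$-integrability in the appropriate range, with smaller $q$ handled by interpolation using the global boundedness of $g$ away from the origin. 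The perturbation $u'(t) = t^{-1/(p-1)}\psi(\ln t, |x|/\sqrt t)$ is estimated analogously: scaling plus interpolation between the $L^1$ and $L^{pr}$ bounds of $\psi$ from Theorem \ref{thm:existence_ancient_solutions} give $\|u'(t)\|_{L^q} \to 0$ as $t \to 0^+$.

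The main obstacle is converting the $L^r$-convergence $w_i(t) \to w_0$ from Theorem \ref{nonlinear_singular_PDE} into $L^q$-convergence, since $L^r \not\subset L^q$ on $\R^d$. Here I would use that, by construction in Section \ref{sec:linear_heat_with_potential}, $u_i$ is a distributional solution of \eqref{PDE} on $(0,T')$, and rewrite
\[
u_i(t) - P(t) u_0 = \bigl[\bar u(t) + u_i'(t) - P(t)\tilde u_0\bigr] - \bigl[w_i(t) - P(t) w_0\bigr],
\]
where $P(t)$ is the heat semigroup on $\R^d$ and each bracket is a Duhamel integral of its respective nonlinearity. These integrals can be bounded in $L^q$ using the standard $L^{p_1}$-$L^{p_2}$ smoothing of $P(t)$, H\"older's inequality, and the $L^r \cap L^{pr}$-controls on $\bar u, u', w_i$ provided by Theorem \ref{nonlinear_singular_PDE}; the essential cancellation is that the heavy tail $\ell(\bar\alpha)/|x|^{2/(p-1)}$ of $\bar u$ is matched by that of $w_i$ inherited from $w_0$, so that the resulting $L^q$-norms are finite and vanish as $t \to 0^+$. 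Combined with $P(t) u_0 \to u_0$ in $L^q$ by strong continuity (since $u_0 \in L^q$), this delivers the claim. The hardest part is arranging the tail matching cleanly in $L^q$ uniformly in $q \in [1, q_c)$, which is precisely what the splitting $\tilde u_0 = u_0 + w_0$ and the construction of $w_1, w_2$ in Section \ref{sec:linear_heat_with_potential} are designed to make possible.
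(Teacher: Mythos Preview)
Your identification of the central difficulty is correct: $w_0 \notin L^q$ for $q<q_c$, and Theorem~\ref{nonlinear_singular_PDE} gives only $L^r$ and weighted $L^{pr}$ control on $w_i$, so neither $w_i(t)-w_0$ nor $w_i(t)-P(t)w_0$ is obviously in $L^q$. But your proposed resolution does not close. The ``tail matching'' claim invokes pointwise behavior of $w_i(t,x)$ at infinity that is nowhere established; the only information available is integral. The Duhamel reformulation is more concrete but runs into an index obstruction. To put $w_i(t)-P(t)w_0=\int_0^t P(t-s)\big[p|\bar u|^{p-1}w_i+f(w_i)\big]\,ds$ into $L^q$ via heat smoothing you need the integrand in some $L^\beta$ with $\beta\le q$. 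H\"older with $w_i\in L^r$ and $|\bar U|^{p-1}\in L^\gamma$ only for $\gamma>d/2$ forces $1/\beta<2/d+1/r<2/d+2/(d(p-1))=2p/(d(p-1))$, i.e.\ $\beta>q_c/p$. Hence the argument as written only covers $q>q_c/p$; for small $q$ (in particular $q=1$ when $p\ge d/(d-2)$) no admissible $\beta$ exists. The same obstruction appears in the other bracket, since $|\tilde u_i|^p\in L^\beta$ again requires $\beta>q_c/p$. An iteration might eventually reach all $q\ge1$, but you have not indicated one, and ``tail matching'' does not substitute for it.

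The paper takes a completely different route that sidesteps tails altogether. One localizes with a cutoff $\chi_{x_0,R}$ supported in $B_{2R}(x_0)$; on bounded sets $L^r\hookrightarrow L^q$, so $u\chi_{x_0,R}\in C([0,T'],L^q)$ follows directly from the known regularity of $\bar u,\psi,w_i$. Writing the mild formula for $u\chi_{x_0,R}$ and using the interpolation $\|\chi^{1/p}u\|_{L^{pq}}^p\le\|\chi^{1/p}u\|_{L^q}^{(r-pq)/(r-q)}\|u\|_{L^r}^{r(p-1)/(r-q)}$ together with Young's inequality, one bounds $\|u(t)\|_{L^q(B_R(x_0))}$ by $\|u_0\|_{L^q}$ plus a term $\int_0^t(1+(t-s)^{-1/2})\|u(s)\|_{L^q(B_{2R}(x_0))}\,ds$ plus an integrable singular term coming from $\|u(s)\|_{L^r}^{r/q}$. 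Taking the supremum over centers $x_0$ and applying Gr\"onwall gives a bound on $\sup_{x_0}\|u(t)\|_{L^q(B_R(x_0))}$ uniform in $R$; sending $R\to\infty$ yields $u\in L^\infty_tL^q$, after which continuity follows by revisiting the mild formula. No cancellation between $\bar u$ and $w_i$ is ever used.
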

	
	\begin{proof}
		We prove the lemma only for $u_2$, the other case being analogous and simpler. We therefore, for convenience, write $u$ (resp.~$w$) in place of $u_2$ (resp.~$w_2$). Let us introduce a radially symmetric cut off $\chi\in C^{\infty}_c(\R^d)$ by
		\begin{align*}
			\chi(x):=\begin{cases}
				1\quad \text{if } \lvert x\rvert\leq 1\\
				0\quad \text{if } \rvert x\rvert\geq 2,
			\end{cases}\quad \chi(x)\geq 0,\quad \chi(x)\leq \chi(x') \quad \text{if} \quad \lvert x\rvert\geq \lvert x'\rvert.
		\end{align*}
		Furthermore, for $x_0\in \R^d,\, R\geq 1$, define $\chi_{x_0, R}(x):=\chi\left(\frac{x-x_0}{R}\right).$ Due to the properties of $\bar{U},\psi, w$, we have that for each $x_0\in \R^d,\, R\geq 1$ \begin{align}\label{regularity_cutoff}
			u\chi_{x_0,R}\in C([0,T'],L^q(\R^d))\cap C((0,T'],L^r(\R^d)),
		\end{align}            
		and for each $t,t_0\in (0,T']$ with $t_0\leq t$, that
		\begin{align*}
			u(t)\chi_{x_0,R}= \, &P(t-t_0)u(t_0)\chi_{x_0,R}+\int_{t_0}^t P(t-s)\left(u(s)\Delta\chi_{x_0,R}-2\operatorname{div}(u(s)\nabla \chi_{x_0,R})\right) ds\\ & +\int_{t_0}^t P(t-s) \chi_{x_0,R} \lvert u(s)\rvert^{p-1}u(s) ds,
		\end{align*}
		in  $L^q(\R^d)$, where $P(t)$ is the heat semigroup on $\R^d$.
		Considering the $L^q$ norm of the equation above, by triangle inequality and the definition of $\chi_{x_0,R}$ we get easily  that
		\begin{align*}
			\norm{ u(t)}_{L^q(B_{R}(x_0))}\lesssim \, & \norm{u(t_0)\chi_{x_0,R}}_{L^q(\R^d)}+\int_{t_0}^t \left(1+\frac{1}{\sqrt{t-s}}\right) \norm{ u(s)}_{L^q(B_{2R}(x_0))}ds\\ & +\int_{t_0}^t\norm{ \chi_{x_0,R} \lvert u(s)\rvert^{p-1}u(s)}_{L^q(\R^d)} ds ,
		\end{align*}
		the hidden constant above being independent of $R$, $t_0,$ $t$ and $x_0$. Let us analyze further the last term.
		By interpolation and Young's inequality, since $r>pq$, we have
		\begin{align*}
			\norm{ \chi_{x_0,R} \lvert u(s)\rvert^{p-1}u(s)}_{L^q(\R^d)}= \, & \norm{ \chi^{1/p}_{x_0,R} u(s)}_{L^{pq}(\R^d)}^p\\ \leq \, & \norm{ \chi^{1/p}_{x_0,R} u(s)}_{L^{q}(\R^d)}^{\frac{r-pq}{r-q}}\norm{  u(s)}_{L^{r}(\R^d)}^{\frac{r(p-1)}{r-q}}\\  \lesssim \, & \norm{ \chi^{1/p}_{x_0,R} u(s)}_{L^{q}(\R^d)}+\norm{  u(s)}_{L^{r}(\R^d)}^{\frac{r}{q}}\\ \lesssim \, & \norm{  u(s)}_{L^{q}(B_{2R}(x_0))}+\frac{1}{s^{\frac{r}{q}\left(\frac{1}{p-1}-\frac{d}{2r}\right)}}\left(\norm{\bar{U}}_{L^r(\R^d)}^{\frac{r}{q}}+\eps^{\frac{r}{q}}\right)\\ & +\norm{w}_{C([0,T'],L^r(\R^d))}^{\frac{r}{q}}.
		\end{align*}
		Therefore 
		\begin{align*}
			\norm{ u(t)}_{L^q(B_{R}(x_0))}\lesssim \,& \norm{u(t_0)\chi_{x_0,R}}_{L^q(\R^d)}+\int_{t_0}^t \left(1+\frac{1}{\sqrt{t-s}}\right) \norm{ u(s)}_{L^q(B_{2R}(x_0))}ds\\ & +\int_{t_0}^t 1+\frac{1}{s^{\frac{r}{q}\left(\frac{1}{p-1}-\frac{d}{2r}\right)}} ds.
		\end{align*}
		Since $u_0\in L^q(\R^d)$, $\frac{r}{q}\left(\frac{1}{p-1}-\frac{d}{2r}\right)<1$, and the relation \eqref{regularity_cutoff} holds, we can let $t_0\rightarrow 0$ to get
		\begin{align*}
			\norm{ u(t)}_{L^q(B_{R}(x_0))}&\lesssim \left(\norm{u_0}_{L^q(\R^d)}+1\right)+\int_{0}^t \left(1+\frac{1}{\sqrt{t-s}}\right) \norm{ u(s)}_{L^q(B_{2R}(x_0))}ds.   
		\end{align*}
		Now, let us introduce the function \begin{align*}
			u_R(t)=\sup_{x_0\in \R^d}\norm{ u(t)}_{L^q(B_{R}(x_0))}.
		\end{align*}
		Obviously
		\begin{align*}
			\norm{ u(t)}_{L^q(B_{2R}(x_0))}\lesssim u_R(t)
		\end{align*}
		and 
		\begin{align*}
			\norm{ u(t)}_{L^q(B_{R}(x_0))}  &\lesssim \left(\norm{u_0}_{L^q(\R^d)}+1\right)+\int_{0}^t \left(1+\frac{1}{\sqrt{t-s}}\right) u_R(s)ds.  
		\end{align*}
		Taking the supremum in $x_0$ of the expression above and applying Gr\"onwall's inequality, we get 
		\begin{align*}
			u_R(t)\lesssim \norm{u_0}_{L^q(\R^d)}+1,
		\end{align*}
		for all $t \in [0,T']$.
		By letting $R\rightarrow +\infty$, we get that $u\in L^{\infty}((0,T),L^q(\R^d))$. In order to show the continuity of $u(t)$ in $L^q$, we use again the mild formulation. Namely, for each $0\leq t_1\leq t_2\leq T'$, we have that
		\begin{align*}
			u(t_2)\chi_{x_0,R}-u(t_1)&=\left(P(t_2-t_1)u(t_1)\chi_{x_0,R}-u(t_1)\right)\\ &+\int_{t_1}^{t_2} P(t-s)\left(u(s)\Delta\chi_{x_0,R}-2\operatorname{div}(u(s)\nabla \chi_{x_0,R})\right) ds\\ & +\int_{t_1}^{t_2} P(t-s) \chi_{x_0,R} \lvert u(s)\rvert^{p-1}u(s) d,
		\end{align*}
		in $L^q(\R^d)$.   
		Taking the $L^q$-norm of the expression above and letting $R\rightarrow +\infty$, we obtain, by analogous considerations to the ones employed to obtain the uniform bound on the $L^q$-norm
		\begin{align*}
			\norm{u(t_2)-u(t_1)}_{L^q(\R^d)} = \, &\limsup_{R\rightarrow +\infty}\norm{u(t_2)\chi_{x_0,R}-u(t_1)}_{L^q(\R^d)}\\  \leq \, & \norm{P(t_2-t_1)u(t_1)-u(t_1)}_{L^q(\R^d)}\\ & +\sup_{R\geq 1}\norm{\int_{t_1}^{t_2} P(t-s)\left(u(s)\Delta\chi_{x_0,R}-2\operatorname{div}(u(s)\nabla \chi_{x_0,R})\right)}_{L^q(\R^d)}\\ & +\sup_{R\geq 1}\norm{\int_{t_1}^{t_2} P(t-s) \chi_{x_0,R} \lvert u(s)\rvert^{p-1}u(s) ds }_{L^q(\R^d)}\\  \lesssim\, & \norm{P(t_2-t_1)u(t_1)-u(t_1)}_{L^q(\R^d)}\\ & +\int_{t_1}^{t_2} \left(1+\frac{1}{\sqrt{t_2-s}}\right) \norm{ u(s)}_{L^q(\R^d)}ds+\int_{t_1}^{t_2} 1+\frac{1}{s^{\frac{r}{q}\left(\frac{1}{p-1}-\frac{d}{2r}\right)}} ds.
		\end{align*}
		Since we already proved that $u\in L^{\infty}((0,T),L^q(\R^d)),$ the claimed continuity follows from the last inequality.
	\end{proof}
	It remains to show that $u_1\neq u_2$. Let us consider the $L^r$-norm of $u_1-u_2$. Thanks to Theorems \ref{thm:existence_ancient_solutions} and \ref{nonlinear_singular_PDE}, and relation \eqref{Item instabilty}, we get that
	\begin{align*}
		\norm{u_1(t)-u_2(t)}_{L^r(\R^d)}&\geq {t^{-\left(\frac{1}{p-1}-\frac{d}{2r}\right)}}\norm{\psi(\ln t)}_{L^r(\R^d)}-\norm{w_1-w_2}_{C([0,T'],L^r(\R^d))}\\ &\gtrsim t^{-\left(\frac{1}{p-1}-\frac{d}{2r}-\lambda_{\bar{\alpha}}\right)}\frac{\norm{\bar{U}^{lin}}_{L^{r}(\R^d)}}{2}-1\rightarrow +\infty
	\end{align*}
	as $t\rightarrow 0$.

	\bibliography{refs.bib}
	\bibliographystyle{plain}
\end{document}